\definecolor{bleu_sombre}{rgb}{0,0,0.6}
\definecolor{Bl}{rgb}{0,0,0.6}
\definecolor{rouge_sombre}{rgb}{0.8,0,0}
\definecolor{vert_sombre}{rgb}{0,0.6,0}
\definecolor{webblue}{rgb}{0.22,0.45,0.70}
\definecolor{webred}{rgb}{0.5, 0.09, 0.09}
\definecolor{zzttqq}{rgb}{0.6,0.2,0.}
\renewcommand{\leq}{\leqslant}	
\renewcommand{\geq}{\geqslant}
\newcommand{\C}{\mathbb{C}}
\newcommand{\R}{\mathbb{R}}
\newcommand{\N}{\mathbb{N}}
\renewcommand{\Re}{\mathrm{Re}\,}
\newcommand{\dd}{\mathrm{d}}
\newcommand{\D}{\partial}
\def\bbb{{\mathcal B}}
\def\ooo{{\mathscr O}}
\def\R{\mathbb R} 
\def\C{\mathbb C}\def\N{\mathbb N}\def\Z{\mathbb Z}
\def\dd{\mathrm{d}}    
\def\norm#1{\left\Vert#1\right\Vert}
\def\set#1{\left\{#1\right\}}
\def\seq#1{\left<#1\right>}
\def\sep#1{\left(#1\right)}
\def\ie{{\it i.e. }}
\def\eps{\varepsilon}
\theoremstyle{plain}
\newtheorem{theorem}{{Theorem}}[section]
\newtheorem*{theorem*}{{Theorem}}
\newtheorem{proposition}[theorem]{Proposition}
\newtheorem*{proposition*}{Proposition}
\newtheorem*{corollary*}{Corollary}
\newtheorem{lemma}[theorem]{Lemma}
\newtheorem{assumption}[theorem]{Assumption}
\newtheorem*{lemma*}{Lemma}
\theoremstyle{definition}
\newtheorem{definition}[theorem]{Definition}
\newtheorem*{definition*}{Definition}
\theoremstyle{remark}
\newtheorem{remark}[theorem]{Remark}
\numberwithin{equation}{section}
\title[Semiclassical spectral gaps]{Semiclassical spectral gaps of the 3D Neumann Laplacian with constant magnetic field}
\author[F. H\'erau]{Fr\'ed\'eric H\'erau}
\address[F. H\'erau]{LMJL - UMR6629, Nantes Université, CNRS, 2 rue de la Houssini\`ere, BP 92208, F-44322 Nantes cedex 3, France}
\email{herau@univ-nantes.fr}
\author[N. Raymond]{Nicolas Raymond}
\address[N. Raymond]{Univ Angers, CNRS, LAREMA, SFR MATHSTIC, F-49000 Angers, France}
\email{nicolas.raymond@univ-angers.fr}
\begin{document}
\begin{abstract}
This article deals with the spectral analysis of the semiclassical Neumann magnetic Laplacian on a smooth bounded domain in dimension three. When the magnetic field is constant and in the semiclassical limit, we establish a four-term asymptotic expansion of the low-lying eigenvalues, involving a geometric quantity along the apparent contour of $\Omega$ in the direction of the field. In particular, we prove that they are simple.
\end{abstract}
\maketitle

\section{Introduction}
In this article, $\Omega\subset\R^3$ is a smooth, bounded, connected, and open set.
We consider
\[\mathscr{L}_h=(-ih\nabla-\mathbf{A})^2\]
with domain
\[\mathrm{Dom}(\mathscr{L}_h)=\{\psi\in H^1(\Omega) : (-ih\nabla-\mathbf{A})^2\psi\in L^2(\Omega)\,, \mathbf{n}\cdot(-ih\nabla-\mathbf{A})\psi=0\,\,\mbox{ on }\partial\Omega\}\,.\]
Here, $\mathbf{n}$ denotes the outward pointing normal to the boundary, and $\mathbf{A} : \overline{\Omega}\to\R^3$ is a smooth vector potential generating a constant magnetic field
\[\nabla\times\mathbf{A}=\mathbf{B}=\mathbf{e}_3\,.\]
The operator $\mathscr{L}_h$ is self-adjoint with compact resolvent. We denote by $(\lambda_n(\mathscr{L}_h))_{n\geq 1}$ the non-decreasing sequence of its eigenvalues and by $\mathscr{Q}_h$ its associated closed form on $H^1(\Omega)$.

The aim of this article is to describe the eigenvalues in the semiclassical limit $h\to 0$. The problem of estimating the eigenvalues of magnetic Schrödinger operators has a long story told in the books \cite{FH10} and \cite{Ray}. In the next section, we recall a fundamental result directly related to this article.

\subsection{On the Helffer-Morame's results}
In \cite[Theorem 4.4]{HM02}, Helffer and Morame have established that
\begin{equation}\label{eq.HM01}
\lambda_1(h)=\Theta_0 h+o(h)\,,
\end{equation}
with $\Theta_0\in(0,1)$ defined as
\begin{equation} \label{eq:bigthetazero}
\Theta_0=\min_{\xi\in\mathbb{R}}\mu_1^{\mathrm{dG}}(\xi)\,,
\end{equation}
where $\mu_1^{\mathrm{dG}}(\xi)$ is the smallest eigenvalue of the de Gennes operator with parameter $\xi$.

This operator is defined for all $\xi\in\mathbb{R}$ as the Neumann realization of the differential operator acting on $L^2(\mathbb{R}_+)$ as
\[\mathfrak{h}_\xi=-\partial_t^2+(\xi-t)^2\,.\]
It is known that $\mu_1^{\mathrm{dG}}$ is real analytic and that is has a unique minimum, which is non-degenerate, attained at $\xi_0>0$ and not attained at infinity (see \cite[Section 3.2]{FH10}, \cite[Section 2.4]{Ray} or the original reference \cite{DH93}). We let
\[\alpha_0 =\frac{\left(\mu_1^{\mathrm{dG}}\right)''(\xi_0)}{2}>0\,.\]
In \eqref{eq.HM01}, we see that the main term $\Theta_0 h$ does not involve the shape of $\Omega$. In fact, Helffer and Morame also investigated the effect of the curvature of the boundary on the spectral asymptotics under the following assumption.

\begin{assumption}\label{hyp.genericcancellation}
	The subset $\Gamma:=\{x\in\partial\Omega : \mathbf{B}(x)\cdot\mathbf{n}(x)=0\}$ is a smooth closed submanifold of dimension one of $\partial\Omega$. Moreover, the function $\partial\Omega\ni x\mapsto \mathbf{B}(x)\cdot\mathbf{n}(x)=n_3(x)$ vanishes linearly on $\Gamma$.
\end{assumption}
It will be convenient to parametrize $\Gamma$ by arc-length thanks to $\gamma :[0,2L)\ni s\to \gamma(s)\in\Gamma$, where $L$ is the half-length of $\Gamma$, and to define adapated coordinates near $\Gamma$ thanks to the geodesic distance (inside $\partial\Omega$) to $\Gamma$ denoted by $r$ ($r$ is chosen so that $\mathbf{n}(\gamma(r,s))\cdot\mathbf{e}_3>0$ when $r<0$ for orientation reasons, see Figure \ref{fig.geo}). In terms of the variable $r$, we have $(\partial_r n_3)_{x}<0$ for all $x\in\Gamma$.

\begin{remark}
When $\Omega$ is strictly convex, Assumption \ref{hyp.genericcancellation} is satisfied. 	
\end{remark}

\begin{figure}[ht!]
\includegraphics[width=11cm]{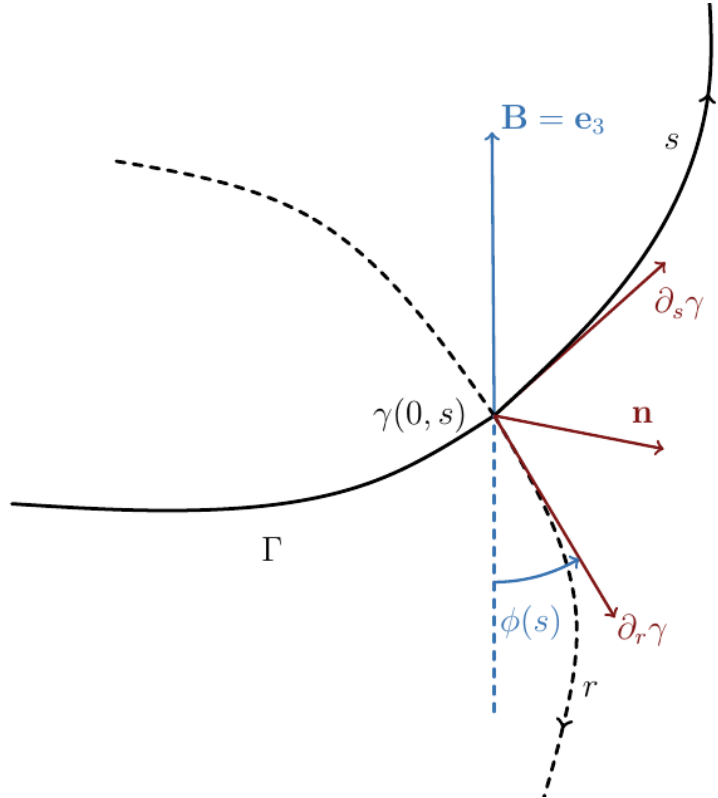}
	\caption{Local geometry near $\Gamma$; the dashed curve is a geodesic inside $\partial\Omega$}
\label{fig.geo}
\end{figure}
It will also be important to describe the relative variation of the magnetic field in the tangent plane to the boundary attached the points of $\Gamma$. Note that, due to our choice of adapted coordinates, we will see that $|\partial_s\gamma(0,s)|=|\partial_r\gamma(r,s)|=1$.
\begin{definition}\label{def.phi}
	We consider the function $s\mapsto \phi(s)$ such that
	\[\cos\phi(s)=\langle \mathbf{e}_3, \partial_r\gamma(0,s)\rangle\,,\quad \sin\phi(s)=\left\langle \mathbf{e}_3, \partial_s\gamma(0,s)\right\rangle \,.\]
	Moreover, we let
	\[K(s)=  \alpha_0^{\frac13}\beta(s)^{\frac23}
    E(s)^{\frac13}\, \]
    with
    \[E(s) = \alpha_0\sin^2\phi(s)+\cos^2\phi(s) \qquad \textrm{ and } \qquad \beta(s)=-\partial_r n_3(\gamma(s))\,.\]
\end{definition}
\begin{remark}
Note that $\phi\equiv 0$ corresponds to the case when $\Gamma$ lies in a plane orthogonal to $\mathbf{e}_3$. For instance, it happens in the case of an ellipsoid. In this case, $K$ is minimal where $\beta$, the transverse curvature to $\Gamma$, is minimal.
\end{remark}

It turns out that the influence of the shape of the boundary appears in the second order term of the asymptotic expansion of $\lambda_1(h)$. This term involves the Montgomery operator, acting on $L^2(\mathbb{R})$, defined for all $\xi\in\mathbb{R}$ by
\[\mathfrak{h}_\xi^{[2]}=-\partial_t^2+\left(\xi-\frac{t^2}{2}\right)^2\,.\]
The lowest eigenvalue of $\mathfrak{h}_\xi^{[2]}$ is denoted by $\mu_1^{[2]}(\xi)$. It is known that $\mu_1^{[2]}$ has a unique minimum, which is non-degenerate, not attained at infinity (see \cite{Helffer10} or even the generalization \cite{FS15}). We let
\[\Theta_0^{[2]}=\min_{\xi\in\mathbb{R}}\mu^{[2]}_1(\xi)\,.\]
Helffer and Morame obtained the following remarkable theorem in \cite[Theorem 1.2]{HM04}.
\begin{theorem}[\cite{HM04}]\label{thm.HM}
Under Assumption \ref{hyp.genericcancellation}, we have
\[\lambda_1(h)=\Theta_0 h+ K_{\min }\Theta_0^{[2]}h^{\frac43}+o(h^{\frac43})\,.\]
\end{theorem}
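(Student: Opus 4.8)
The plan is to prove Theorem~\ref{thm.HM} by a standard two-sided semiclassical estimate: a construction of quasimodes giving the upper bound $\lambda_1(h)\leq \Theta_0 h + K_{\min}\Theta_0^{[2]}h^{4/3}+o(h^{4/3})$, and an Agmon-type localization argument together with a microlocal lower bound giving the matching lower bound. The key geometric input is that, by Assumption~\ref{hyp.genericcancellation}, the ``magnetic wall'' effect concentrates the ground state near the apparent contour $\Gamma$ where $\mathbf{B}\cdot\mathbf{n}=0$: away from $\Gamma$ the relevant model is either the interior Landau operator (bottom of spectrum $h$, strictly above $\Theta_0 h$) or the half-space de Gennes operator with a field making a nonzero angle with the boundary (bottom of spectrum $\geq \Theta_0 h$ with a gain), so the eigenfunction is exponentially small outside any fixed neighbourhood of $\Gamma$.

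For the upper bound I would work in the adapted coordinates $(r,s)$ near $\Gamma$ together with the normal variable to $\partial\Omega$, rescale according to the expected concentration scales — the boundary-layer scale $h^{1/2}$ in the de Gennes direction and the anisotropic scale $h^{2/3}$ (resp. $h^{1/6}$) dictated by the Montgomery operator in the $\Gamma$-transverse directions — and expand the magnetic quadratic form $\mathscr{Q}_h$ in powers of $h^{1/6}$. The principal operator is $h$ times the de Gennes operator $\mathfrak{h}_\xi$, whose minimization over $\xi$ produces $\Theta_0$; the next relevant term, after projecting onto the de Gennes ground state and using that $\xi$ is frozen at $\xi_0$, is $h^{4/3}$ times an effective operator on $L^2(\R_s)\otimes L^2(\R_t)$ which — using the definition of $K(s)$, the factor $E(s)$ encoding the tilt $\phi(s)$, the transverse curvature $\beta(s)$, and the constant $\alpha_0=\tfrac12(\mu_1^{\mathrm{dG}})''(\xi_0)$ — is unitarily equivalent to $K(s)$ times the Montgomery operator $\mathfrak{h}_\eta^{[2]}$ in a rescaled transverse variable. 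Minimizing in $\eta$ gives $K(s)\Theta_0^{[2]}$, and then a further harmonic-oscillator localization in $s$ at the minimum of $K$ (which costs only $O(h^{4/3+\delta})$, negligible at this order) yields the quasimode with energy $\Theta_0 h + K_{\min}\Theta_0^{[2]}h^{4/3}+O(h^{4/3+\delta})$; the spectral theorem then gives the upper bound.

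For the lower bound I would first establish the Agmon/normal-concentration estimates: a rough a priori bound $\lambda_1(h)\leq \Theta_0 h + Ch^{4/3}$ (from the above), combined with standard magnetic-bottle comparison inequalities, forces any ground state $\psi$ to be $O(h^\infty)$-concentrated in the boundary layer of width $\sim h^{1/2}$ and, crucially, exponentially small away from $\Gamma$ at scale $h^{1/3}$ — this uses that on $\partial\Omega\setminus\Gamma$ the local ground-state energy $\Theta_0 h$ is strictly exceeded, with a quantitative gain proportional to the distance to $\Gamma$ because $n_3$ vanishes linearly on $\Gamma$. Having localized near $\Gamma$, I would partition into cells of size $h^{1/3-\delta}$ in $s$, freeze the coefficients in each cell, insert the de Gennes projection and bound the error terms, and reduce to the family of Montgomery-type model operators; a lower bound of the form $\inf_s K(s)\,\Theta_0^{[2]} - o(1)$ times $h^{4/3}$ on each cell, patched via an IMS partition of unity, gives $\lambda_1(h)\geq \Theta_0 h + K_{\min}\Theta_0^{[2]}h^{4/3}-o(h^{4/3})$.

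The main obstacle is the lower bound bookkeeping: controlling all the cross terms produced by the coordinate change (metric factors, the curvature of $\partial\Omega$, the variation of $n_3$ and of $\phi$ across a cell) and showing that, once the de Gennes ground state in the boundary layer is factored out, the remaining effective operator is genuinely the Montgomery operator up to errors $o(h^{4/3})$ — in particular that the ``wrong-sign'' or lower-order terms (linear terms in the transverse variables, the $\xi$-derivative contributions away from $\xi_0$) do not produce spurious $O(h^{4/3})$ corrections. Handling the anisotropy of the two transverse scales $h^{2/3}$ and $h^{1/6}$ consistently, and making the freezing-of-coefficients error uniform, is where the bulk of the technical work lies; the interior and tilted-boundary regions, by contrast, are controlled by soft spectral comparison with the known model operators.
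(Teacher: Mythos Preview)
This theorem is quoted from \cite{HM04} and is not proved independently in the paper; it is instead subsumed by the proof of the sharper Theorem~\ref{thm.main}. Your outline is a correct sketch of the original Helffer--Morame strategy: quasimodes for the upper bound, Agmon localization plus an IMS partition into small cells with frozen coefficients for the lower bound, and reduction to the de Gennes and Montgomery models. Two small corrections: the transverse scale you call $h^{2/3}$ should be $h^{1/3}$ (balance $h^2 D_r^2$ against $(\beta r^2/2)^2$; this is exactly the rescaling $r=h^{1/3}\breve r$ of Section~\ref{sec.rescalingcheckbreve}), and the exponential decay away from $\Gamma$ at scale $h^{1/3}$ that you invoke is in fact a novelty of the present paper (Proposition~\ref{prop.optimal-r}) --- the two-term expansion of \cite{HM04}, and your argument, need only the rough scale $h^{1/4}$ of Proposition~\ref{prop.roughlocGamma}.

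The paper's own route to (and beyond) this statement is genuinely different: rather than partition-and-freeze, it rescales to an operator-valued $\hbar$-pseudodifferential operator ($\hbar=h^{1/6}$) and performs two nested Grushin reductions, first projecting onto the de Gennes ground state in the normal variable $t$ to obtain an effective two-dimensional symbol with principal part $\mu_1^{\mathrm{dG}}(\xi_0+\tilde\rho\sin\phi-\tilde p\cos\phi)+(\tilde\rho\cos\phi+\tilde p\sin\phi)^2$, then projecting onto the Montgomery ground state in $r$ to arrive at a one-dimensional $\varepsilon$-pseudodifferential operator on $\Gamma$ with principal symbol $b^\Gamma(s,\sigma)$. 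Your approach is more elementary and entirely adequate for the two-term result, but becomes unwieldy at higher orders; the paper's microlocal machinery needs the optimal $h^{1/3}$ localization as input (so that the rescaled symbols lie in good classes) and substantially more infrastructure, but delivers the four-term expansion and the simplicity of the low-lying eigenvalues in a single systematic pass.
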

Fournais and Persson-Sundqvist have also established a four-term asymptotic expansion when $\Omega$ is a ball, see \cite[Theorem 1.1]{FP11}. In their situation, the crucial term, which determines the monotonicity of the eigenvalue with respect to $h$, is of order $h^2$.

\subsection{Main result: asymptotic simplicity}
Let us now describe our main result. It requires the following genericity assumption.

\begin{assumption}\label{hyp.generic}
The function $K$ has a unique minimum, which is non-degenerate.	
\end{assumption}
We let
\[ b^\Gamma(s,\sigma)=K(s)\mu^{[2]}_1\left(\frac{\alpha_0^{\frac13}\sigma}
{E(s)^{\frac23}\beta(s)^{\frac13}}\right)\,, \]
and we notice that $b^\Gamma$ has a unique minimum, which is non-degenerate. It is attained at a point $(s_{\min},\sigma_{\min})$, and we let $b^\Gamma_{\min}=b^\Gamma(s_{\min},\sigma_{\min})$. Note that
$b^\Gamma_{\min}=K_{\min}\Theta_0^{[2]}$ and that $s_{\min}$ is the point where the minimum of $K$ is reached and
\[\sigma_{\min}=\frac{E^{\frac23}(s_{\min})\beta^{\frac13}
(s_{\min})}{\alpha_0^{\frac13}}\xi_0^{[2]}\,.\]
The main result of this paper is the following theorem, extending to the order $o(h^{\frac53})$ the result of Helffer and Morame in \cite{HM04} and revealing the spectral gap.

\begin{theorem}\label{thm.main}
Under Assumptions \ref{hyp.genericcancellation} and \ref{hyp.generic}, there exist $d_0,d_1\in\mathbb{R}$ such that the following holds. Let $n\geq 1$. We have
\[\lambda_n(h)=\Theta_0h+  K_{\min}\Theta_0^{[2]}  h^{\frac43}+d_0h^{\frac32}+\left(d_1+\left(n-\frac12\right)
\sqrt{\det\mathrm{Hess}_{(s_{\min},\sigma_{\min})}b^\Gamma}\right)h^{\frac53}
+o(h^{\frac53})\,.\]
\end{theorem}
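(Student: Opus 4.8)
The plan is to follow the standard dimensional-reduction philosophy of semiclassical magnetic spectral theory (as in \cite{HM04}, \cite{Ray}), but to push the construction of the effective operator far enough to capture the $h^{5/3}$ term. The whole point is that the eigenfunctions concentrate near $\Gamma$, then near the point $s_{\min}$, and near the fibre $\xi_0^{[2]}$ in the relevant phase-space variable; implementing this requires three successive reductions, each of which replaces $\mathscr{L}_h$ by an operator on a space of lower dimension (or on a smaller scale), up to errors controlled in the appropriate power of $h$.

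\medskip
\noindent\textbf{Step 1: Agmon estimates and localization near $\Gamma$.} First I would establish that, for any eigenfunction $\psi$ associated with an eigenvalue $\lambda_n(h)=\Theta_0 h+O(h^{4/3})$, one has exponential decay away from $\Gamma$ on the natural scale. More precisely, I expect weighted estimates of the form $\|e^{\delta d(x,\Gamma)/h^{1/3}}\psi\|+\|e^{\delta d(x,\Gamma)/h^{1/3}}(-ih\nabla-\mathbf{A})\psi\|\leq C\|\psi\|$ for some $\delta>0$, together with normal-to-the-boundary decay at scale $h^{1/2}$. This reduces the analysis to a tubular neighborhood of $\Gamma$, where one introduces the adapted coordinates $(r,s)$ of the excerpt and the normal coordinate, and rescales: $r = h^{1/3}\tilde r$ (or whatever anisotropic scaling the quadratic form dictates — the correct weights are $r\sim h^{1/3}$, normal variable $\sim h^{1/2}$), expanding the metric, the vector potential, and hence $\mathscr{Q}_h$ in powers of $h^{1/6}$.

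\medskip
\noindent\textbf{Step 2: Construction of an effective operator on $\Gamma$.} After rescaling and a suitable gauge transformation, the leading operator in the normal/transverse variables is, fibrewise in $(s,\sigma)$ (with $\sigma$ the rescaled tangential momentum conjugate to $s$ along $\Gamma$), a de Gennes-type operator whose ground-state energy, after optimizing in the fibre variable associated to the $t^2/2$ term, is exactly $h\,\Theta_0 + h^{4/3} b^\Gamma(s,\sigma)+\dots$; this is where the Montgomery operator and the function $K(s)$ enter, reproducing Theorem \ref{thm.HM}. The mechanism is a Grushin/Feshbach reduction: projecting onto the fibrewise ground state of the fast variables produces an effective $h$-pseudodifferential operator on $L^2(\mathbb{R}_s)$ whose Weyl symbol is $\Theta_0 h + h^{4/3}b^\Gamma(s,\sigma)+ d_0 h^{3/2}+ h^{5/3} q(s,\sigma)+\dots$, the $h^{3/2}$ term being a constant $d_0$ (coming from the first subprincipal correction, which by parity/symmetry has vanishing $(s,\sigma)$-dependence at this order), and $q$ a symbol whose precise form I do not need. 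The careful bookkeeping here is that one must keep track of all terms up to relative order $h^{2/3}$ in the fibre operator, using regular perturbation theory of the simple eigenvalue $\mu_1^{[2]}$ and the non-degeneracy of its minimum.

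\medskip
\noindent\textbf{Step 3: Harmonic approximation for the effective operator.} The effective symbol $\Theta_0 h+h^{4/3}b^\Gamma(s,\sigma)+d_0h^{3/2}+h^{5/3}q(s,\sigma)+o(h^{5/3})$ has, by Assumption \ref{hyp.generic}, a non-degenerate minimum of $b^\Gamma$ at $(s_{\min},\sigma_{\min})$. I would then perform a further rescaling around this point, $s-s_{\min}\sim h^{1/6}\hat s$, $\sigma-\sigma_{\min}\sim h^{1/6}\hat\sigma$, so that $h^{4/3}b^\Gamma$ becomes $h^{4/3}b^\Gamma_{\min} + h^{4/3}\cdot h^{1/3}\cdot\frac12\mathrm{Hess}_{(s_{\min},\sigma_{\min})}b^\Gamma(\hat s,\hat\sigma)+\dots = h^{4/3}b^\Gamma_{\min}+h^{5/3}\cdot\big(\text{quadratic form}\big)+\dots$, while $h^{5/3}q$ contributes only $h^{5/3}q(s_{\min},\sigma_{\min})$ to this order, a constant that I absorb into $d_1$. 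The leading operator at order $h^{5/3}$ is thus a one-dimensional semiclassical harmonic oscillator (a quadratic Hamiltonian in $(\hat s,\hat\sigma)$ with the small parameter $h^{1/3}$), whose eigenvalues are $\big(n-\tfrac12\big)\sqrt{\det\mathrm{Hess}_{(s_{\min},\sigma_{\min})}b^\Gamma}$ times $h^{1/3}\cdot h^{4/3}=h^{5/3}$, i.e. exactly the claimed spectral gap. Simplicity of $\lambda_n(h)$ follows because the harmonic oscillator has simple spectrum and the reductions are spectrally exact up to $o(h^{5/3})$, which separates consecutive eigenvalues.

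\medskip
\noindent\textbf{Main obstacle.} The delicate point is the rigorous control of the error terms through the cascade of reductions: each rescaling is anisotropic and the remainders in the expansion of the quadratic form $\mathscr{Q}_h$ must be shown to be genuinely subordinate (in form sense, on the relevant weighted spaces) to the model operator at each stage, and the Grushin reduction in Step 2 must be carried out with symbol-class estimates uniform down to the scale $h^{1/3}$ so that the effective pseudodifferential operator is a legitimate object to which Step 3's harmonic approximation applies. Establishing the two-sided spectral comparison — lower bounds via the IMS localization formula together with the effective operator, upper bounds via explicit quasimodes built from the ground states of $\mathfrak{h}_\xi$, $\mathfrak{h}_\xi^{[2]}$, and the harmonic oscillator — with matching $o(h^{5/3})$ remainders is where essentially all the work lies; the identification of the constants $d_0$, $d_1$ is then automatic (they are whatever the construction produces), and it is only the $n$-dependent coefficient that needs to be pinned down explicitly, which the harmonic oscillator does for us.
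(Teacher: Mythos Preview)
Your strategy matches the paper's: Agmon localization near $\Gamma$ (at the optimal scale $h^{1/3}$ in $r$, $h^{1/2}$ in $t$), Grushin-type dimensional reduction, then harmonic approximation around the non-degenerate minimum of $b^\Gamma$. Two points deserve correction.

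First, your Step 2 compresses what is really a \emph{two-stage} Grushin reduction. The paper first projects onto the de Gennes ground state in the $t$-variable, obtaining an effective $\hbar$-pseudodifferential operator in $(r,s)$ (with $\hbar=h^{1/6}$) whose principal symbol is $\mu_1^{\mathrm{dG}}(\xi_0+\cdot)+(\cdot)^2$; only \emph{after} further microlocal and scale changes does a second Grushin reduction on the Montgomery ground state in $r$ produce the one-dimensional effective operator in $s$ with principal symbol $b^\Gamma(s,\sigma)$. Trying to do both at once, as you suggest, obscures the symbol classes and makes the error control (which you correctly flag as the hard part) essentially intractable. A subtle intermediate step is an algebraic cancellation in the subprincipal symbol (the paper's Lemma~\ref{lem.annulr}): a combination of geometric coefficients in $n_2$ vanishes identically on $\Gamma$, and without this the $r$-linear subprincipal term would be too large to absorb.

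Second, your claim that the $h^{3/2}$ coefficient $d_0$ is constant ``by parity/symmetry'' is not how it arises. In the paper the subprincipal symbol $M_1(s,\sigma)$ of the final one-dimensional effective operator is a genuine function, not a constant; $d_0=M_1(s_{\min},\sigma_{\min})$ appears because, after completing the square to absorb the linear part of $M_1$ into a phase-space translation of order $\varepsilon^{1/2}$, only its value at the minimum survives to this order (see the proof of Proposition~\ref{prop.final}). The remaining contributions of $M_1$ are pushed to $O(\varepsilon)=O(h^{1/3})$ relative to $h^{4/3}$, i.e.\ into $d_1$.
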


\bigskip
Let us make some comments and remarks on this theorem.
\begin{remark}
\begin{enumerate}[i)]~
\item Theorem \ref{thm.main} establishes that the low-lying eigenvalues are simple when $h$ is small enough, and that the spectral gap is of order $h^{\frac53}$. We can also notice the presence of the term of order $h^{\frac32}$.
\item Not only Theorem \ref{thm.main} is a more accurate description of the spectrum than the one recalled in Theorem \ref{thm.HM}, but the spirit of its proof is also of a different nature.  As we explain in Section \ref{sec.organization} below, our method strongly relies on microlocal analysis and projects a new light on the Helffer-Morame's results.
\item Actually, our strategy does not only provide us with a four-term asymptotics and it can also be used to establish a full asymptotic expansion in powers of $h^{\frac13}$. We can even see that, up to a local change of gauge near $s_{\min}$, the $n$-th normalized eigenfunction looks like, in the $(r,s,t)$ coordinates,
\[u_{\xi_0}(h^{-\frac12}t)v(h^{-\frac13}r)w_n(h^{-\frac{1}{6}}(s-s_{\min}))\,,\]
where
\begin{enumerate}[---]
\item $u_{\xi_0}$ is the positive normalized eigenfunction of $\mathfrak{h}_{\xi_0}$,
\item $v(r)=u_{\xi_0^{[2]}}\left(\frac{\delta_0^{\frac16}\beta^{\frac13}(s_{\min})}{A^{\frac13}(s_{\min})}r\right)$, where $u_{\xi_0^{[2]}}$ is the first normalized positive eigenfunction of the Montgomery operator $\mathfrak{h}^{[2]}_{\xi_0^{[2]}}$,
\item $w_n(s)=H_n\left(\left[\frac{K''(s_{\min})}{K(s_{\min})(\mu_1^{[2]})''(\xi_0^{[2]})}\right]^{\frac14}s\right)$, where $H_n$ is the $n$-th Hermite function.
\end{enumerate}
\item The analogous of Theorem \ref{thm.main} in two dimensions has been established by Fournais and Helffer in \cite[Theorem 1.1]{FH06}, and it was a strong extension of \cite[Section 10]{HM01} in the case of constant magnetic fields.
\end{enumerate}
\end{remark}

\subsection{Organization and strategy}\label{sec.organization}
Section \ref{sec.2} is devoted to recall why  the eigenfunctions associated with the low-lying eigenvalues are localized near $\Gamma$, see Propositions \ref{prop.loct} and \ref{prop.roughlocGamma}. In Section \ref{sec.rst}, we write the magnetic Laplacian in appropriate tubular coordinates near $\Gamma$. Once this is done, we prove in Section \ref{sec.optimal-r} that the eigenfunctions are exponentially localized near $\Gamma$ at a scale of order $h^{\frac13}$. This fact is a novelty with respect to the results in \cite{HM04} where such a decay of the eigenfunctions does not appear. For that purpose, we have to combine several arguments. We start with a partition of the unity and Taylor expansions of the metric and the magnetic potential (up to a local change of gauge) near $\Gamma$. This reveals the role of a model operator given by the quadratic form in \eqref{eq.modeloperator}. This operator cannot be analyzed anymore only with our space partition of unity. Up to a convenient rotation involving the geometric angles of the magnetic field with respect to the boundary, we get a new quadratic form \eqref{eq.Q}, which can be analyzed thanks to a partial Fourier transform exhibiting the role of the de Gennes operator. A microlocal partition of the unity is then used to relate \eqref{eq.Q} with a differential operator in two dimensions. This operator belongs to the class of Montgomery operators, which was studied in \cite{BHR16}. At this stage, a simple commutator estimate is done and the localization with respect to $r$ --- the distance to $\Gamma$ --- is established. This new and optimal localization allows to expand the vector potential and to neglect various terms smaller than the announced order of magnitude of the spectral gap, see Proposition \ref{prop.checkLm}. The operator, called $\check{\mathscr{L}}^{\mathrm{m}}_h$, is then rescaled in $t$ (at the scale $h^{\frac12}$) and in $r$ (at the scale $h^{\frac13}$). This natural rescaling gives a new operator $\breve{\mathscr{L}}^{\mathrm{m}}_h$ that invites us to consider the effective semiclassical parameter $\hbar=h^{\frac16}$. It also turns out that it is convenient to introduce the auxiliary parameter $\mu=\hbar^2$ and to transform, for instance, the formal subprincipal term $\hbar\beta(s)\frac{r^2}{2}$ into a principal-type term $\mu^{\frac12}\beta(s)\frac{r^2}{2}$. In \eqref{eq.symbol0}, $\breve{\mathscr{L}}^{\mathrm{m}}_h$ is seen as a pseudo-differential operator with operator symbol, whose principal symbol is given in \eqref{eq.n0good}. 

 There, we describe the subprincipal term (involving the $\hbar^2$ terms) and notice a remarkable algebraic/geometric cancellation (Lemma \ref{lem.annulr}), which will play an important role. Unfortunately, but not surprisingly, our pseudo-differential operator does not belong to a reasonable class to perform a microlocal dimensional reduction. For this reason, we establish a rough microlocalization result for the eigenfunctions by explaining why they are bounded in $\rho$ and $\mu\sigma$ ($\rho$ and $\sigma$ being the dual variables of $r$ and $s$), see Section \ref{sec.roughmicro}. By suitably truncating  quantities in the phase space, we are then led to study only operators with bounded $\mathscr{C}^\infty$ symbols as well as their derivatives.

 Section \ref{sec.parametrix} is where the Grushin-Sjöstrand machinery comes into play. Our strategy is inspired by our study of the magnetic tunnelling effect in \cite{BHR21}. There we used an adapation of a microlocal dimensional reduction method originally developped by Martinez \cite{M07} (and inspired by the Sjöstrand's works) and improved by Keraval in \cite{Keraval}. In Section \ref{sec.parametrix}, we use this method two times. First, we introduce the Grushin matrix of the operator symbol \eqref{eq.Grushinmatrix}. We construct an approximate inverse/parametrix \eqref{eq.parametrix1} and describe its "Schur complement" in Proposition \ref{prop.qpm}. This reveals a first effective pseudo-differential operator $\mathrm{Op}^W_{\hbar} a^{\mathrm{eff}}_\hbar$ given by \eqref{eq.aeff}, which describes the spectrum of the initial operator, see Proposition \ref{prop.specreducaeff}. The rest of the paper is devoted to the spectral analysis of $\mathrm{Op}^W_{\hbar} a^{\mathrm{eff}}_\hbar$. The principal symbol looks essentially like
\[\mu_1(r,s,\rho,\sigma)=\mu^{[1]}_1(\xi_0+\tilde\rho\sin\phi-\tilde p\cos\phi)+(\tilde\rho\cos\phi+ \tilde p\sin\phi)^2\,,\]
with
\[\tilde\rho=\Xi_1(\rho)\,,\quad \tilde p=\Xi_2(\mu\sigma)-\mu^{\frac12}\beta(s)\frac{r^2}{2}\,,\]
for suitable localization functions $\Xi_1$ and $\Xi_2$. When considered as a function of $\tilde \rho$ and $\tilde p$, $\mu_1$ has a unique minimim at $(\tilde\rho,\tilde p)=(0,0)$, which is non-degenerate. This induces a microlocalization of the eigenfunctions near $(\tilde\rho,\tilde p)=(0,0)$ (see Proposition \ref{prop.rhop}) and allows to expand the symbol at a convient order $N$, giving \eqref{eq.aeffN}. The aim is then to analyse the spectrum of $\mathrm{Op}^W_{\hbar}a_{\hbar,N}^{\mathrm{eff}}$. In Lemma \ref{lem.microhsigma}, we prove that the eigenfunctions are microlocalized. More precisely, we show that they are localized where $\hbar\sigma$ is bounded. This allows us to replace $\Xi_2(\hbar^2\sigma)$ by $\hbar\Xi_2(\hbar\sigma)$. We are then reduced to the spectral analysis of an $\hbar$-pseudo-differential operator whose principal symbol is \[\mathfrak{a}_0=\frac{1}{2}(\mu_1^{\mathrm{dG}})''(\xi_0)(\rho\sin\phi- \hbar\hat p\cos\phi)^2+(\rho\cos\phi+ \hbar\hat p\sin\phi)^2\,,\quad \hat p=\Xi_2(\hbar\sigma)-\beta(s)\frac{r^2}{2}\,.\]
We can see this new operator as a $1$-pseudo-diffferential operator with respect to $r$, whose principal symbol is
\[\mathfrak{b}_0(r,s,\rho,\sigma)=\frac{1}{2}(\mu_1^{\mathrm{dG}})''(\xi_0)(\rho\sin\phi- \hat p\cos\phi)^2+(\rho\cos\phi+ \hat p\sin\phi)^2\,,\]
where we recall that $\mu_1^{\mathrm{dG}}$ is the lowest eigenvalue of the de Gennes operator. This suggests to introduce the final semiclassical parameter $\varepsilon=\hbar^2$, and to study an $\varepsilon$-pseudo-differential operator (with a symbol expanded in powers of $\varepsilon^{\frac12}$) whose principal symbol is
\[\mathfrak{c}_0=\frac{1}{2}(\mu^{\mathrm{dG}})''(\xi_0)(\rho\sin\phi- \check p\cos\phi)^2+(\rho\cos\phi+ \check p\sin\phi)^2\,,\quad \check p=\Xi_2(\sigma)-\beta(s)\frac{r^2}{2}\,.\]
For fixed $(s,\sigma)$, this is essentially the symbol of a Montgomery operator, up to a convenient rescaling, as explained in Section \ref{sec.finalgrushin}. This principal operator symbol is the first stone of a second Grushin reduction. We reduce the spectral analysis to the one of an $\varepsilon$-pseudo-differential operator in one dimension. Its principal symbol is
\[b^\Gamma(s,\sigma)=K(s)\mu^{[2]}_1\left(\frac{\alpha_0^{\frac13}
\Xi_2(\sigma)}{E(s)^{\frac23}
\beta(s)^{\frac13}}\right)\,.\]
Section \ref{sec.final} is devoted to the analysis of this final operator.

\section{A first reduction to a neighborhood of $\Gamma$} \label{sec.2}

This section is devoted to recall a result of rough localization of the eigenfunctions near the set $\Gamma$. We give the precise statement and some consequences just below and postpone the proof to Section \ref{subsec.roughloc}.
In the following, we consider
\[\Omega_\delta=\{x\in\Omega : \mathrm{dist}(x,\partial\Omega)<\delta\}\,,\]
for $\delta>0$, a suitable neighborhood of $\D \Omega$.

\begin{proposition}\label{prop.roughlocGamma}
	There exist $\delta_0>0$, $C>0$, $\alpha>0$ and $h_0>0$ such that, for all $h\in(0,h_0)$, $\delta\in(0,\delta_0)$ and all eigenpairs $(\lambda,\psi)$ with $\lambda\leq\Theta_0 h+Ch^{\frac43}$, we have
	\[\int_{\Omega_\delta}e^{2\alpha \mathrm{dist}(p(x),\Gamma)/h^{\frac14}}|\psi(x)|^2\dd x\leq C\|\psi\|^2\,.\]
\end{proposition}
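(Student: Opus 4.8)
The plan is to prove a Helffer--Morame type Agmon estimate: I would feed into the magnetic IMS identity a lower bound for the quadratic form $\mathscr{Q}_h$ that \emph{sees} the distance to $\Gamma$. First, by the normal localisation of the low-lying eigenfunctions (Proposition \ref{prop.loct}), if $\mathscr{L}_h\psi=\lambda\psi$ with $\lambda\leq\Theta_0h+Ch^{4/3}$, then $\psi$ concentrates in a fixed boundary layer $\Omega_{\delta_0}$, the complementary contribution being exponentially small and remaining so after multiplication by the bounded weight used below; on $\Omega_{\delta_0}$ the nearest-point projection $p$ onto $\partial\Omega$ and the map $x\mapsto\mathrm{dist}(p(x),\Gamma)$ are smooth and Lipschitz. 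The substantive input, which I would quote in the spirit of \cite{HM02,HM04}, is a form lower bound
\[\mathscr{Q}_h(\varphi)\geq\Theta_0h\,\norm{\varphi}^2+\eps_0h\int_{\Omega_{\delta_0}}\min\!\big(\abs{n_3(x)},\delta_0\big)\,\abs{\varphi(x)}^2\,\dd x-C_0h^{5/4}\norm{\varphi}^2\,,\]
valid for all $\varphi\in H^1(\Omega)$ supported in $\Omega_{\delta_0}$, with $\eps_0,C_0>0$. Since $n_3$ vanishes linearly on $\Gamma$ and is bounded away from $0$ off a neighbourhood of $\Gamma$ (Assumption \ref{hyp.genericcancellation}), one has $\min(\abs{n_3(x)},\delta_0)\geq c\,\min(\mathrm{dist}(p(x),\Gamma),\delta_0)$.

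Granting this, I would run the weighted estimate with $\Phi(x)=\alpha\,\min(\mathrm{dist}(p(x),\Gamma),\delta_1)\,h^{-1/4}$, for $\delta_1\leq\delta_0$ and $\alpha>0$ small; then $\Phi$ is bounded and Lipschitz with $\abs{\nabla\Phi}\leq C\alpha h^{-1/4}$, hence $\norm{h(\nabla\Phi)\ee^{\Phi}\psi}^2\leq C\alpha^2h^{3/2}\norm{\ee^{\Phi}\psi}^2$. As $\Phi$ is real and $\psi$ satisfies the magnetic Neumann condition, the boundary term vanishes and the magnetic IMS identity yields
\[\mathscr{Q}_h(\ee^{\Phi}\psi)-\norm{h(\nabla\Phi)\ee^{\Phi}\psi}^2=\lambda\,\norm{\ee^{\Phi}\psi}^2\leq\big(\Theta_0h+Ch^{4/3}\big)\norm{\ee^{\Phi}\psi}^2\,.\]
Inserting the form lower bound with $\varphi=\ee^{\Phi}\psi$, cancelling $\Theta_0h\norm{\ee^{\Phi}\psi}^2$, and using $h^{4/3}+h^{3/2}\leq h^{5/4}$ for $h$ small, I obtain
\[\eps_0h\int_{\Omega_{\delta_0}}\min\!\big(\mathrm{dist}(p(x),\Gamma),\delta_0\big)\,\abs{\ee^{\Phi}\psi(x)}^2\,\dd x\leq C_1h^{5/4}\norm{\ee^{\Phi}\psi}^2\,.\]
I then split $\norm{\ee^{\Phi}\psi}^2$ over $\{\mathrm{dist}(p(x),\Gamma)<Mh^{1/4}\}$ and its complement: on the complement the left integrand is $\geq Mh^{1/4}\abs{\ee^{\Phi}\psi}^2$, while on the first set $\Phi\leq\alpha M$; choosing $M$ so that $\eps_0M\geq2C_1$ gives $\int_{\Omega_{\delta_0}}\abs{\ee^{\Phi}\psi}^2\,\dd x\leq 2\ee^{2\alpha M}\norm{\psi}^2$. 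Adding the exponentially small contribution of $\Omega\setminus\Omega_{\delta_0}$ and restricting the integral to $\Omega_\delta$ produces the announced bound.

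The only genuinely delicate ingredient is the form lower bound. To prove it I would use a partition of unity of $\partial\Omega$ into cells of size $\ell\simeq h^{3/8}$, so that the IMS cutoff error $h^2\ell^{-2}$ is of the admissible size $h^{5/4}$; on each cell one freezes the metric and, after a local change of gauge, the vector potential --- the resulting errors being of lower order --- and compares $\mathscr{Q}_h$ with the Neumann realisation on a half-space of the constant-field operator whose field makes a fixed angle $\vartheta$ with the boundary hyperplane, whose bottom of the spectrum exceeds $\Theta_0$ by a quantity comparable to $\abs{\sin\vartheta}=\abs{n_3}$. On the cells meeting $\Gamma$, where $n_3$ changes sign and this gain is lost, one retains only $\Theta_0h$ minus the cutoff error: this is precisely why the confinement length obtained at this stage is the crude $h^{1/4}$ and not the optimal $h^{1/3}$ reached later, after rescaling, in Section \ref{sec.optimal-r}. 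The remaining steps --- summing the cell estimates, using that the squared cutoffs add to $1$, and the bookkeeping of the lower-order remainders --- are routine.
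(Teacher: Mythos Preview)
Your proof is correct and follows essentially the same route as the paper's. The paper states the intermediate form lower bound as Proposition \ref{prop.roughlowerbound} in terms of the Lu--Pan function $\mathfrak{s}(\theta(x))$ rather than your $\min(|n_3(x)|,\delta_0)$, but these are equivalent via $\mathfrak{s}(\theta)-\Theta_0\geq c\min(1,|\theta|)$; your sketch of its proof (partition at scale $h^{3/8}$, freezing metric and gauge, comparison with the half-space model) matches the paper's Appendix argument exactly, including the choice $\rho=3/8$. The only cosmetic difference is that the paper works directly with the Dirichlet--Neumann realisation on $\Omega_\delta$ and uses the untruncated weight $\mathrm{dist}(p(x),\Gamma)$ (bounded anyway on $\overline\Omega$), whereas you keep the full domain and invoke Proposition \ref{prop.loct} to discard the interior contribution.
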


Proposition \ref{prop.roughlocGamma} has the following straightforward consequence. Let us consider the tubular neighborhood of $\Gamma$ given by
\[\Omega_{\delta_1,\delta_2}=\Omega_{\delta_1}\cap\Gamma_{\delta_2}\,,\quad \Gamma_{\delta_2}=\{x\in\Omega : \mathrm{dist}(x,\Gamma)<\delta_2\}\,,\]
with $\delta_1=h^{\frac12-\eta}$ and $\delta_2=h^{\frac14-\eta}$ for some
$\eta\in\left(0,\frac14\right)$. We denote by $\mathscr{L}_{h,\delta_1,\delta_2}$ the realization of $(-ih\nabla-\mathbf{A})^2$ on $L^2(\Omega_{\delta_1,\delta_2})$ with Neumann condition on $\partial\Omega\cap\Omega_{\delta_1,\delta_2}$ and Dirichlet condition on $\Omega_{\delta_1,\delta_2}\setminus\partial\Omega$.

\begin{proposition}\label{prop.1}
	For all $n\geq 1$, we have
	\[\lambda_n(\mathscr{L}_{h,\delta_1,\delta_2})-Ce^{-ch^{-\eta}}\leq\lambda_n(\mathscr{L}_h)\leq\lambda_n(\mathscr{L}_{h,\delta_1,\delta_2})\,.\]
\end{proposition}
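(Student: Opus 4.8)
The plan is to obtain the two inequalities by the standard Dirichlet bracketing / IMS localization argument, using the exponential decay supplied by Proposition \ref{prop.roughlocGamma} to control the error terms. The upper bound $\lambda_n(\mathscr{L}_h)\leq\lambda_n(\mathscr{L}_{h,\delta_1,\delta_2})$ is the easy half: the form domain of $\mathscr{L}_{h,\delta_1,\delta_2}$ embeds into that of $\mathscr{L}_h$ by extending functions by zero outside $\Omega_{\delta_1,\delta_2}$ (the Dirichlet condition on the interior face makes the $H^1$-extension admissible), and on this subspace the two quadratic forms agree. The min-max principle then gives the inequality directly, with no error term. The real content is the lower bound $\lambda_n(\mathscr{L}_{h,\delta_1,\delta_2})-Ce^{-ch^{-\eta}}\leq\lambda_n(\mathscr{L}_h)$.

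For the lower bound I would introduce a smooth cutoff $\chi_h$ equal to $1$ on a slightly smaller tube and supported in $\Omega_{\delta_1,\delta_2}$, with $|\nabla\chi_h|\lesssim h^{-\frac12+\eta}$ in the normal direction and $\lesssim h^{-\frac14+\eta}$ in the direction transverse to $\Gamma$, together with a complementary cutoff $\tilde\chi_h$ so that $\chi_h^2+\tilde\chi_h^2=1$. The IMS formula gives, for any $\psi$ in the form domain of $\mathscr{L}_h$,
\[\mathscr{Q}_h(\psi)=\mathscr{Q}_h(\chi_h\psi)+\mathscr{Q}_h(\tilde\chi_h\psi)-h^2\big(\||\nabla\chi_h|\psi\|^2+\||\nabla\tilde\chi_h|\psi\|^2\big)\,.\]
The $\chi_h\psi$ term is bounded below using the form of $\mathscr{L}_{h,\delta_1,\delta_2}$ (since $\chi_h\psi$ lies in its form domain). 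The localization error is $h^2$ times a quantity supported where $\mathrm{dist}(p(x),\Gamma)\gtrsim h^{\frac14-\eta}$ or $\mathrm{dist}(x,\partial\Omega)\gtrsim h^{\frac12-\eta}$; on the first region Proposition \ref{prop.roughlocGamma} gives a factor $e^{-2\alpha h^{-\eta}}$ after absorbing the polynomial weight $h^{-\frac12+\eta}$, and on the second region one uses the classical Agmon estimates in the normal variable (distance to $\partial\Omega$ at scale $h^{\frac12}$, which for the low-lying eigenvalues also follow from the bulk-vs-boundary comparison, and are implicitly available from Proposition \ref{prop.loct} referenced in the excerpt). The term $\mathscr{Q}_h(\tilde\chi_h\psi)$ is nonnegative and can simply be dropped, or more carefully compared with $\Theta_0 h$ times $\|\tilde\chi_h\psi\|^2$, which is exponentially small anyway. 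Applying this to a trial space spanned by the first $n$ eigenfunctions of $\mathscr{L}_h$ and invoking min-max yields the claimed inequality with $c$ any constant below $2\alpha$ (or $2\alpha/\alpha_0$-type constants after optimizing the two regimes).

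The main obstacle is bookkeeping the two different length scales $\delta_1=h^{\frac12-\eta}$ (normal) and $\delta_2=h^{\frac14-\eta}$ (along the boundary, transverse to $\Gamma$) simultaneously: one must check that the worst localization error, coming from the cruder $h^{\frac14}$-scale decay of Proposition \ref{prop.roughlocGamma}, still beats any power of $h$, i.e. that $h^2\cdot h^{-1+2\eta}\cdot e^{-2\alpha h^{-\eta}}=O(e^{-ch^{-\eta}})$ — which is clear since the exponential dominates. One also has to make sure the Neumann boundary condition is respected by the cutoffs (they depend only on $\mathrm{dist}(\cdot,\partial\Omega)$ and $\mathrm{dist}(p(\cdot),\Gamma)$, hence multiplication by $\chi_h$ preserves the form domain of $\mathscr{L}_h$ and maps into that of $\mathscr{L}_{h,\delta_1,\delta_2}$). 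Beyond that the argument is routine.
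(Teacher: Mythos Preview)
Your proposal is correct and is precisely the standard Dirichlet-bracketing plus Agmon-localization argument that the paper has in mind: the paper does not write out a proof at all, merely introducing Proposition~\ref{prop.1} as a ``straightforward consequence'' of Proposition~\ref{prop.roughlocGamma} (together with the normal-variable decay of Proposition~\ref{prop.loct}). Your handling of the two length scales and of the form-domain compatibility is exactly what is needed.
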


\begin{remark}
Up to a slight modification of $\Omega_{\delta_1,\delta_2}$, we may assume that $\Omega_{\delta_1,\delta_2}$ is smooth to avoid regularity problems near the "corners" of $\Omega_{\delta_1,\delta_2}$.
\end{remark}

 \subsection{First localization near the boundary} \label{subsec.roughloc}
 The following three results are well-known, see \cite[Theorems 2.1 \& 3.1]{HM96}.
 \begin{lemma}\label{lem.min-interior}
 	For all $\psi\in H^1_0(\Omega)$ and for all $h>0$,
 	\[\mathscr{Q}_h(\psi)\geq \int_{\Omega}h|\psi|^2\dd x\,.\]	
 \end{lemma}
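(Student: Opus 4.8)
The claim is that for $\psi\in H^1_0(\Omega)$ one has $\mathscr{Q}_h(\psi)\geq h\|\psi\|^2$. The plan is to reduce to the whole-space situation and use the fact that the ground state energy of the (semiclassical) magnetic Laplacian on $\R^3$ with a constant field of unit strength is exactly $h$. Since $\psi\in H^1_0(\Omega)$, I would first extend it by zero to all of $\R^3$; this extension lies in $H^1(\R^3)$ and $\mathscr{Q}_h(\psi)=\int_{\R^3}|(-ih\nabla-\mathbf{A})\tilde\psi|^2\,\dd x$, so it suffices to prove the inequality for the full-space magnetic form on $H^1(\R^3)$.

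The next step is the standard computation for a constant magnetic field. Since $\nabla\times\mathbf{A}=\mathbf{e}_3$, one can use the gauge invariance of the quadratic form (adding $\nabla\chi$ to $\mathbf{A}$ does not change $\mathscr{Q}_h$) to assume, after a rigid motion and a gauge transformation, that $\mathbf{A}(x)=(0,x_1,0)$ (the Landau gauge, with the field along $\mathbf{e}_3$). Then $(-ih\nabla-\mathbf{A})^2=-h^2\partial_{x_3}^2+\big((-ih\partial_{x_1})^2+(-ih\partial_{x_2}-x_1)^2\big)$. The $x_3$ part is nonnegative, and the remaining two-dimensional operator, after a partial Fourier transform in $x_2$ with dual variable $\eta$, becomes $-h^2\partial_{x_1}^2+(h\eta-x_1)^2$, i.e.\ (after translating $x_1$) a rescaled harmonic oscillator $-h^2\partial_{x_1}^2+x_1^2$ whose bottom of the spectrum is $h$. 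Hence the full two-dimensional magnetic operator is bounded below by $h$, and adding back the nonnegative $x_3$-derivative term gives $\mathscr{Q}_h(\psi)\geq h\|\psi\|^2$ for all $\psi\in H^1(\R^3)$, in particular for the zero-extension of any $\psi\in H^1_0(\Omega)$.

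There is essentially no serious obstacle here; the only point requiring a word of care is the justification that the zero-extension of an $H^1_0(\Omega)$ function is genuinely in $H^1(\R^3)$ with the magnetic form transferring verbatim (which is immediate since $\mathbf{A}$ is smooth on $\overline\Omega$ and can be extended smoothly, and $\psi$ vanishes on $\partial\Omega$), and the spectral identification $\inf\mathrm{spec}\big(-h^2\partial^2+x^2\big)=h$, which is classical. Alternatively, one can invoke directly the well-known lower bound for the ground state energy of the magnetic Laplacian with constant field (e.g.\ \cite[Theorems 2.1 \& 3.1]{HM96}), which is exactly the cited reference, so the proof is just a pointer to that fact together with the zero-extension remark.
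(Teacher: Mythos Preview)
Your proof is correct and follows the standard argument; the paper itself does not prove this lemma but simply cites \cite[Theorems 2.1 \& 3.1]{HM96} as a well-known fact, and your sketch is precisely the classical computation that underlies that reference (zero-extension to $\R^3$, Landau gauge, partial Fourier transform, reduction to the harmonic oscillator with ground state $h$). There is nothing to add.
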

Lemma \ref{lem.min-interior} and the fact that $\Theta_0<1$ imply that the eigenfunctions are exponentially localized near the boundary at a scale $h^{\frac12}$.

 \begin{proposition}\label{prop.loct}
 	Consider $\alpha\in(0,\sqrt{1-\Theta_0})$. There exist $C, h_0>0$ such that, for all $h\in(0,h_0)$, and all eigenpairs $(\lambda,\psi)$ suh that $\lambda\leq\Theta_0 h+o(h)$, we have
 	\[\int_{\Omega}e^{2\alpha\frac{\mathrm{d}(x,\partial\Omega)}{h^{\frac12}}}|\psi(x)|^2\dd x\leq C\|\psi\|^2\,.\]	
 \end{proposition}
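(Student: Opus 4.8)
The plan is a weighted Agmon-type estimate in which the localisation exponent is realised on the scale $h^{1/2}$, the gain coming from the strict inequality $\Theta_0<1$ (together with $\alpha^2<1-\Theta_0$) and from the \emph{a priori} bound $\lambda\leq\Theta_0h+o(h)$. First I would record the magnetic Agmon identity: for any real, bounded, Lipschitz weight $\Phi$ on $\overline\Omega$, using that $\psi\in\mathrm{Dom}(\mathscr L_h)$ satisfies $\mathbf{n}\cdot(-ih\nabla-\mathbf{A})\psi=0$ on $\partial\Omega$, so that the boundary term produced by the integration by parts vanishes (this is the one place where the magnetic Neumann condition enters),
\[\mathscr Q_h(e^{\Phi}\psi)=\mathrm{Re}\,\langle\mathscr L_h\psi,e^{2\Phi}\psi\rangle+h^2\|(\nabla\Phi)\,e^{\Phi}\psi\|^2=\lambda\,\|e^{\Phi}\psi\|^2+h^2\|(\nabla\Phi)\,e^{\Phi}\psi\|^2.\]
I would apply this with $\Phi=\Phi_n:=\alpha h^{-1/2}\min(\mathrm{dist}(x,\partial\Omega),nh^{1/2})$, which is bounded and $\alpha h^{-1/2}$-Lipschitz, and for which $h^2|\nabla\Phi_n|^2\leq\alpha^2h$ almost everywhere (since $|\nabla\,\mathrm{dist}(\cdot,\partial\Omega)|\leq 1$ a.e.); hence $\mathscr Q_h(e^{\Phi_n}\psi)\leq(\lambda+\alpha^2h)\|e^{\Phi_n}\psi\|^2$.

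Next I would establish a matching lower bound for $\mathscr Q_h$ by an IMS localisation \emph{at the scale $h^{1/2}$}. Pick a large parameter $R>0$ and a smooth partition $\chi_1^2+\chi_2^2=1$ with $\chi_1=1$ on $\{\mathrm{dist}(x,\partial\Omega)<Rh^{1/2}\}$, $\mathrm{supp}\,\chi_1\subset\{\mathrm{dist}(x,\partial\Omega)<2Rh^{1/2}\}$, $\mathrm{supp}\,\chi_2\subset\{\mathrm{dist}(x,\partial\Omega)>Rh^{1/2}\}$ and $|\nabla\chi_j|\leq C(Rh^{1/2})^{-1}$. Combining the IMS formula, the variational bound $\mathscr Q_h(\chi_1v)\geq\lambda_1(\mathscr L_h)\|\chi_1v\|^2\geq(\Theta_0+o(1))h\|\chi_1v\|^2$ (the lower half of \eqref{eq.HM01}), and Lemma \ref{lem.min-interior} applied to $\chi_2v\in H^1_0(\Omega)$ (which gives $\mathscr Q_h(\chi_2v)\geq h\|\chi_2v\|^2$), and writing $1=\Theta_0+(1-\Theta_0)$ together with $\|\chi_1v\|^2+\|\chi_2v\|^2=\|v\|^2$, I get for every $v\in H^1(\Omega)$
\[\mathscr Q_h(v)\ \geq\ \Theta_0h\,\|v\|^2+(1-\Theta_0)h\,\|\chi_2v\|^2-(CR^{-2}+o(1))h\,\|v\|^2.\]
The decisive point is that away from $\partial\Omega$ the form controls $h\|v\|^2$, which exceeds the boundary threshold $\Theta_0h\|v\|^2$ by the definite amount $(1-\Theta_0)h\|v\|^2$.

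Finally I would insert $v=e^{\Phi_n}\psi$ into this lower bound, compare with the upper bound of the first step and with $\lambda\leq\Theta_0h+o(h)$, cancel the common term $\Theta_0h\|e^{\Phi_n}\psi\|^2$ and divide by $h$, reaching
\[(1-\Theta_0)\,\|\chi_2e^{\Phi_n}\psi\|^2\ \leq\ \big(\alpha^2+CR^{-2}+o(1)\big)\big(\|\chi_1e^{\Phi_n}\psi\|^2+\|\chi_2e^{\Phi_n}\psi\|^2\big).\]
Since $\alpha^2<1-\Theta_0$, choosing $R$ large and then $h$ small makes the coefficient on the right strictly less than $1-\Theta_0$, so the $\chi_2$-term is absorbed and $\|e^{\Phi_n}\psi\|^2\leq C'\|\chi_1e^{\Phi_n}\psi\|^2$; and since $\chi_1$ is supported in $\{\mathrm{dist}(x,\partial\Omega)<2Rh^{1/2}\}$, where $e^{2\Phi_n}\leq e^{4\alpha R}$, the right-hand side is at most $C'e^{4\alpha R}\|\psi\|^2$, with a constant independent of $n$. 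Letting $n\to\infty$ by monotone convergence gives $\int_\Omega e^{2\alpha\,\mathrm{dist}(x,\partial\Omega)/h^{1/2}}|\psi|^2\,\dd x\leq C\|\psi\|^2$, which is the claim. I expect the main obstacle to be conceptual rather than computational: one must localise on the scale $h^{1/2}$ (not a fixed scale), for otherwise $e^{2\Phi_n}$ is not $O(1)$ on $\mathrm{supp}\,\chi_1$ and nothing is gained; and one must couple the de Gennes threshold $\Theta_0h$ near $\partial\Omega$ with the bulk threshold $h$ of Lemma \ref{lem.min-interior}, so that the gap $1-\Theta_0>\alpha^2$ leaves exactly the room needed to absorb the $\alpha^2h$ produced by differentiating the weight.
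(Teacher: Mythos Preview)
Your proof is correct and is precisely the standard Agmon-type argument; the paper does not give its own proof but cites \cite[Theorems 2.1 \& 3.1]{HM96}, where this same strategy (weighted identity plus the interior bound of Lemma \ref{lem.min-interior}) is used. One small remark: your appeal to \eqref{eq.HM01} for the bound $\mathscr{Q}_h(\chi_1 v)\geq(\Theta_0+o(1))h\|\chi_1 v\|^2$ is unnecessary---the trivial bound $\mathscr{Q}_h(\chi_1 v)\geq 0$ already suffices, since after cancellation you only need $1-\Theta_0-\alpha^2>0$, which you have by hypothesis; this avoids any appearance of circularity with the one-term asymptotics.
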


 Let $\eta\in\left(0,\frac12\right)$, and consider $\delta=h^{\frac12-\eta}$. We consider $\mathscr{L}_{h,\delta}=(-ih\nabla-\mathbf{A})^2$ the operator with magnetic Neumann condition on $\partial\Omega$ and Dirichlet condition on $\partial\Omega_\delta\setminus\partial\Omega$. We have then the following direct consequence.
 \begin{proposition}
 	\[\lambda_n(\mathscr{L}_{h,\delta})-Ce^{-ch^{-\eta}}\leq \lambda_n(\mathscr{L}_{h,\delta}) \leq \lambda_n(\mathscr{L}_{h})\,.\]
 \end{proposition}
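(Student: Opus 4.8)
As printed, the statement has its first two members equal, so its first inequality, $\lambda_n(\mathscr{L}_{h,\delta})-Ce^{-ch^{-\eta}}\leq\lambda_n(\mathscr{L}_{h,\delta})$, is vacuous; read in the light of the preceding sentence and of Proposition~\ref{prop.1}, the substantive claim is the two-sided bracketing
\[\lambda_n(\mathscr{L}_{h,\delta})-Ce^{-ch^{-\eta}}\leq\lambda_n(\mathscr{L}_h)\leq\lambda_n(\mathscr{L}_{h,\delta})\,,\]
which is what I would prove, following the scheme used for Proposition~\ref{prop.1} but with the localization near $\Gamma$ (Proposition~\ref{prop.roughlocGamma}) replaced by the localization near $\partial\Omega$ (Proposition~\ref{prop.loct}). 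First I would settle the upper bound $\lambda_n(\mathscr{L}_h)\leq\lambda_n(\mathscr{L}_{h,\delta})$ by the min-max principle: for $h$ small the level set $\{x\in\Omega:\mathrm{dist}(x,\partial\Omega)=\delta\}$ is a smooth hypersurface compactly contained in $\Omega$, so extension by $0$ across it embeds the form domain of $\mathscr{L}_{h,\delta}$ into $H^1(\Omega)=\mathrm{Dom}(\mathscr{Q}_h)$, isometrically in $L^2$ and with the quadratic form preserved; hence every $n$-dimensional test space for $\mathscr{L}_{h,\delta}$ produces one for $\mathscr{L}_h$ with the same Rayleigh quotients.

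For the lower bound $\lambda_n(\mathscr{L}_{h,\delta})\leq\lambda_n(\mathscr{L}_h)+Ce^{-ch^{-\eta}}$, the plan is to manufacture $n$ quasimodes for $\mathscr{L}_{h,\delta}$ out of genuine eigenfunctions of $\mathscr{L}_h$. Take an $L^2$-orthonormal family $\psi_1,\dots,\psi_n$ of eigenfunctions of $\mathscr{L}_h$ associated with $\lambda_1(\mathscr{L}_h),\dots,\lambda_n(\mathscr{L}_h)$; since $\lambda_n(\mathscr{L}_h)=\Theta_0h+o(h)$ for each fixed $n$ (standard, see e.g. \cite{FH10}), Proposition~\ref{prop.loct} applies and gives $\int_\Omega e^{2\alpha\,\mathrm{dist}(x,\partial\Omega)/h^{1/2}}|\psi_j(x)|^2\,\dd x\leq C$ for some $\alpha>0$. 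I would then pick a smooth cutoff $\chi_h$ with $\chi_h\equiv1$ on $\Omega_{\delta/2}$, $\mathrm{supp}\,\chi_h\subset\Omega_\delta$, $|\nabla\chi_h|\leq C\delta^{-1}$, and $\nabla\chi_h$ supported in the shell $\{x\in\Omega:\delta/2\leq\mathrm{dist}(x,\partial\Omega)\leq\delta\}$, so that each $\chi_h\psi_j$ belongs to the form domain of $\mathscr{L}_{h,\delta}$. Using the weight on that shell one checks that $\|(1-\chi_h)\psi_j\|^2$ and $\||\nabla\chi_h|\psi_j\|^2$ are $\mathcal{O}(e^{-ch^{-\eta}})$; consequently the Gram matrix of $(\chi_h\psi_j)_{1\leq j\leq n}$ is within $\mathcal{O}(e^{-ch^{-\eta}})$ of the identity --- so for $h$ small these functions span an $n$-dimensional subspace of the form domain of $\mathscr{L}_{h,\delta}$ --- and the IMS localization formula for the partition $\chi_h^2+(1-\chi_h^2)=1$ gives, on that subspace, $\mathscr{Q}_h(u)\leq\lambda_n(\mathscr{L}_h)\|u\|^2+Ce^{-ch^{-\eta}}$. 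The min-max principle then delivers the lower bound.

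The one step that is not pure bookkeeping --- and it is precisely the point already encountered in the proof of Proposition~\ref{prop.1} --- is the localization-of-error estimate: one must see that the weight $e^{2\alpha\,\mathrm{dist}(\cdot,\partial\Omega)/h^{1/2}}$ of Proposition~\ref{prop.loct}, restricted to the thin shell $\{\delta/2\leq\mathrm{dist}(\cdot,\partial\Omega)\leq\delta\}$, beats the blow-up $\delta^{-2}=h^{-1+2\eta}$ coming from $|\nabla\chi_h|^2$. This holds because $\delta=h^{1/2-\eta}$ makes $e^{-\alpha\delta/h^{1/2}}=e^{-\alpha h^{-\eta}}$ decay faster than any power of $h$, so the polynomial prefactor $h^{-1+2\eta}$ is harmlessly absorbed; this is exactly where the scale $\delta=h^{1/2-\eta}$ and the constraint $\eta\in(0,1/2)$ enter. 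Everything else --- the two min-max comparisons and the IMS formula --- is routine.
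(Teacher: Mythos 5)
Your reading of the statement is right (the first member should be $\lambda_n(\mathscr{L}_h)$, as in the analogous Proposition \ref{prop.1}), and your proof is correct: the paper offers no proof at all, presenting the result as a ``direct consequence'' of Proposition \ref{prop.loct}, and the extension-by-zero min-max comparison plus the cutoff/IMS quasimode argument with Agmon weights beating the $\delta^{-2}=h^{-1+2\eta}$ loss is exactly the standard argument being invoked (and is the same scheme the authors later spell out for Proposition \ref{prop.checkLm}). Nothing is missing.
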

This implies that we can focus on the spectral analysis of $\mathscr{L}_{h,\delta}$.

\subsection{Rough localization near $\Gamma$}
We can now explain why the eigenfunctions are localized near $\Gamma$ as stated in Proposition \ref{prop.roughlocGamma}. This deserves some geometric preliminaries.
\begin{definition} \label{def.theta}
For $\delta$ small enough, and for all $x\in\Omega_\delta$, we can consider the projection $p(x)$ $\partial\Omega$ of $x$ defined by 
\[\min_{y\in\partial\Omega}\mathrm{dist}(x,y)=|x-p(x)|\,.\]
We also define $\theta(x)$ in $[-\frac\pi2,\frac\pi2]$ the oriented angle (see Figure \ref{fig:theta}) between $\mathbf{B}(p(x))$ and the tangent plane to $\D \Omega$ at $p(x)$ by
\[\mathbf{B}(p(x))\cdot\mathbf{n}(p(x))= \sin(\theta(x))\,.\]
\end{definition}

\begin{remark}
Note that $\theta(p(x)) = \theta(x)$ for all $x\in\Omega_\delta$ and that $\Gamma$ is then defined by $\set{x \in \D \Omega : \theta(x) = 0}$.
\end{remark}

\definecolor{zzttqq}{rgb}{0.6,0.2,0}
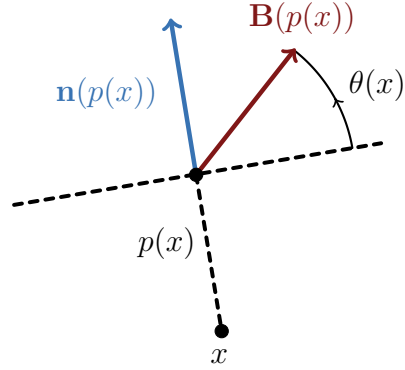
\begin{figure}[ht!]  \label{fig:theta}
\begin{tikzpicture}[line cap=round,line join=round,x=1cm,y=1cm]
\draw[line width=2pt, smooth,samples=1000,domain=0.1:0.43] plot[parametric] function{3.67*t**(3)+0*t**(2)+9.07*t-3.95,-10.77*t**(3)+0*t**(2)+7.73*t-1.05};
\draw[line width=2pt, smooth,samples=1000,domain=0.43:0.75] plot[parametric] function{-14.06*t**(3)+22.62*t**(2)-0.54*t-2.59,3.04*t**(3)-17.62*t**(2)+15.22*t-2.11};
\draw [->,line width=1.7pt, color=webblue] (0.19,1.41) -- (-0.15,3.49);
\draw [line width=1.5pt,dashed] (-2.23,1.01)-- (2.65,1.83);
\draw [shift={(0.19,1.41)},line width=0.8pt,->, samples=500]   plot[domain=0.17:0.5,variable=\t]({1*2.1076052761368764*cos(\t r)+0*2.1076052761368764*sin(\t r)},{0*2.1076052761368764*cos(\t r)+1*2.1076052761368764*sin(\t r)}) ;
\draw [shift={(0.19,1.41)},line width=0.8pt,-, samples=500]   plot[domain=0.5:0.9,variable=\t]({1*2.1076052761368764*cos(\t r)+0*2.1076052761368764*sin(\t r)},{0*2.1076052761368764*cos(\t r)+1*2.1076052761368764*sin(\t r)}) ;
\draw [line width=1.5pt, dashed] (0.19,1.41)-- (0.53,-0.67);
\draw [line width=1.7pt,->, color=webred] (0.19,1.41)-- (1.51,3.09);
\draw (2.6,2.6) node {$\theta(x)$};
\draw (1.6,3.5) node {\textcolor{webred}{$\mathbf{B}(p(x))$}};
\draw (-1,2.5) node {\textcolor{webblue}{$\mathbf{n}(p(x))$}};
\draw (0.5,-1) node {$x$};
\draw (-0.2,0.5) node {$p(x)$};
\draw [fill=black] (0.19,1.41) circle (2.5pt);
\draw [fill=black] (0.53,-0.67) circle (2.5pt);
\end{tikzpicture}
\caption{The angle $\theta(x)$}
\end{figure}
When we consider a small neighborhood of a point $x_0$ in $\partial\Omega$, we guess that the magnetic Laplacian acts as the magnetic Neumann Laplacian on a half-space with constant magnetic field $\mathbf{B}(x_0)$ of associated oriented angle $\theta(x_0)$. This model operator has been studied in \cite{HM02} and it is known that its spectrum is $[\mathfrak{s}(\theta(x_0)),+\infty[$, where $\mathfrak{s} : [-\frac\pi2,\frac\pi2] \to [\Theta_0,1]$ is a continuous and even function, strictly increasing (and also analytic) on $(0,\frac\pi2)$ and satisfying
\begin{equation}\label{eq.stheta}
\mathfrak{s}(\theta)\geq\Theta_0 +c\min(1,|\theta|)\,.
\end{equation}
for some $c>0$ and where we recall that $\Theta_0$ was defined in \eqref{eq:bigthetazero}.

\bigskip
In the "non-flat" case, the following proposition is then established in \cite[Proposition 9.1.2 \& Theorem 9.4.3]{FH10}. For completeness, we recall the proof in Appendix \ref{sec.app} (by using the tubular coordinates described in Section \ref{sec.coordnearboundary}). Here $\mathscr{Q}_{h,\delta}$ denotes the quadratic form associated with $\mathscr{L}_{h,\delta}$.
\begin{proposition}[\cite{FH10}] \label{prop.roughlowerbound}
There exist $C,h_0>$ such that for all $h\in(0,h_0)$ and all $\psi$ in the form domain of $\mathscr{Q}_{h,\delta}$, we have
	\[\mathscr{Q}_{h,\delta}(\psi)\geq \int_{\Omega_\delta}h\mathfrak{s}(\theta(x))|\psi(x)|^2\dd x-Ch^{\frac54}\|\psi\|^2\,.\]
\end{proposition}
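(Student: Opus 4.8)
The plan is to run the classical localization/model-operator argument. First I would fix a mesoscopic length scale $\ell=\ell(h)\to 0$, to be optimized only at the end, and take a partition of unity $(\chi_j)_j$ subordinate to a cover of a fixed neighbourhood of $\partial\Omega$ by balls $B(x_j,\ell)$ with $x_j\in\partial\Omega$, normalized so that $\sum_j\chi_j^2\equiv 1$ there, with uniformly bounded overlap and $\sum_j|\nabla\chi_j|^2\leq C\ell^{-2}$; the IMS localization formula then yields $\mathscr{Q}_{h,\delta}(\psi)\geq\sum_j\mathscr{Q}_{h,\delta}(\chi_j\psi)-C h^2\ell^{-2}\|\psi\|^2$. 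On $\mathrm{supp}\,\chi_j$ I pass to the tubular coordinates of Section~\ref{sec.coordnearboundary} centred at $x_j$: the Euclidean metric reads $\mathrm{Id}+O(\ell+\delta)$ there (the error controlled by the tangential distance to $x_j$ and by the normal depth, which is $\leq\delta$ on $\Omega_\delta$), and a local gauge transformation killing the constant part $\mathbf{A}(x_j)$ turns the vector potential into a \emph{linear} potential $\mathbf{A}_j^{\mathrm{lin}}$ generating the constant field $\mathbf{B}(x_j)$, plus a quadratic remainder $R_j=O(\ell^2)$. Estimating the metric perturbation by Cauchy--Schwarz, and handling the cross term created by $R_j$ by another Cauchy--Schwarz with an optimized parameter (contributing $\sim h^{\frac12}\ell^2\|\chi_j\psi\|^2$), I would obtain
\[\mathscr{Q}_{h,\delta}(\chi_j\psi)\geq q_j^{\mathrm{mod}}(\widetilde{\chi_j\psi})-C\bigl(h(\ell+\delta)+h^{\frac12}\ell^2\bigr)\|\chi_j\psi\|^2\,,\]
where $q_j^{\mathrm{mod}}$ is the form of the half-space Neumann magnetic Laplacian $(-ih\nabla-\mathbf{A}_j^{\mathrm{lin}})^2$ with oriented angle $\theta(x_j)$; the Dirichlet face $\partial\Omega_\delta\setminus\partial\Omega$ only raises the form and may be dropped in a lower bound, and the corners of $\Omega_\delta$ are handled via the smoothing remark above.

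By the spectral description of this model recalled around \eqref{eq.stheta} --- its spectrum is $[\mathfrak{s}(\theta(x_j)),+\infty)$, and restricting it to the cell only raises the bottom --- one has $q_j^{\mathrm{mod}}(v)\geq h\,\mathfrak{s}(\theta(x_j))\|v\|^2$. To reassemble, I use that $\theta$ is smooth and $\mathfrak{s}$ Lipschitz with $\mathfrak{s}\leq 1$, so $\mathfrak{s}(\theta(x_j))\geq\mathfrak{s}(\theta(x))-C\ell$ on $\mathrm{supp}\,\chi_j$, whence $\sum_j h\,\mathfrak{s}(\theta(x_j))\|\chi_j\psi\|^2\geq\int_{\Omega_\delta}h\,\mathfrak{s}(\theta(x))|\psi|^2\,\dd x-Ch\ell\|\psi\|^2$. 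Collecting the IMS error and all the cell errors leaves a remainder of size $h^2\ell^{-2}+h\ell+h\delta+h^{\frac12}\ell^2$; with $\delta=h^{\frac12-\eta}$ the term $h\delta=h^{\frac32-\eta}$ is $o(h^{\frac54})$ for $\eta$ small enough, and the choice $\ell=h^{\frac38}$ --- which balances $h^2\ell^{-2}$ against $h^{\frac12}\ell^2$ --- renders every remaining term $O(h^{\frac54})$. This is the announced inequality.

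The step I expect to be the genuine obstacle is precisely this balancing. The quadratic Taylor remainder of the gauge-transformed potential forces $\ell\lesssim h^{\frac38}$ while the localization error forces $\ell\gtrsim h^{\frac38}$, so the scale is essentially pinned and the exponent $\tfrac54$ is the best this argument can deliver; the delicate part is to verify that no hidden contribution --- from the metric change, from the rescaling, from the comparison of form domains, or from the normalization Jacobians --- secretly exceeds $h^{\frac54}$. Two lesser points need care: the uniformity in $j$ of the model lower bound $h\,\mathfrak{s}(\theta(x_j))$, in particular near $\Gamma$ where $\theta(x_j)\to 0$ and $\mathfrak{s}(\theta(x_j))\to\Theta_0$ so that $\mathfrak{s}$ is only continuous (not smoother) there, and the treatment of the cells meeting $\partial\Omega_\delta\setminus\partial\Omega$ or the corners of $\Omega_\delta$.
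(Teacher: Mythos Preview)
Your proposal is correct and follows essentially the same approach as the paper: a partition of unity at the mesoscopic scale $\ell=h^{3/8}$, freezing of the metric and linearization of the potential in tubular coordinates, reduction to the half-space model whose bottom is $h\,\mathfrak{s}(\theta(x_j))$, and the same balancing $h^2\ell^{-2}\sim h^{1/2}\ell^2$ yielding the $h^{5/4}$ remainder. The only cosmetic difference is that the paper covers $\Omega_\delta$ with arbitrary balls and treats separately those not meeting $\partial\Omega$ via Lemma~\ref{lem.min-interior}, whereas you place the centres on $\partial\Omega$; since $\delta=h^{1/2-\eta}\ll h^{3/8}$ for small $\eta$ this makes no real difference.
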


An important consequence of Proposition \ref{prop.roughlowerbound} is the rough localization of the first eigenfunctions near $\Gamma$ stated in Proposition \ref{prop.roughlocGamma}.

\begin{proof}[Proof of Proposition \ref{prop.roughlocGamma}]
For $x \in \Omega_\delta$ with $\delta$ small, we let $\varphi(x)=\mathrm{dist}(p(x),\Gamma)$. Let us consider $(\lambda,\psi)$ an eigenpair of $\mathscr{L}_{h,\delta}$ with $\lambda\leq\Theta_0 h+Ch^{\frac43}$. A computation gives	\[\mathscr{Q}_{h,\delta}(e^{\alpha\varphi/h^{1/4}}\psi)
=\lambda\|e^{\alpha\varphi/h^{1/4}}\psi\|^2
+\alpha^2h^{\frac32}\|e^{\alpha\varphi/h^{1/4}}\psi\nabla\varphi\|^2\,,\]
so that, using Proposition \ref{prop.roughlowerbound}, \[\int_{\Omega_\delta}\left(h\mathfrak{s}(\theta(x))-Ch^{\frac54}-\alpha^2h^{\frac32}|\nabla\varphi(x)|^2
-\lambda\right)e^{2\alpha(x)\varphi/h^{\frac14}}|\psi(x)|^2\dd x\leq 0 \,.\]
Thus, \[\int_{\Omega_\delta}\left(\mathfrak{s}(\theta(x))-\Theta_0-Ch^{\frac14}\right)
e^{2\alpha\varphi(x)/h^{\frac14}}|\psi(x)|^2\dd x\leq 0\,.\]	
By using Assumption \ref{hyp.genericcancellation}, $\partial\Omega\ni x\mapsto\mathbf{B}\cdot\mathbf{n}(x) $ vanishes linearly on $\Gamma$ and thus, for some $c_0>0$,
\[|\theta(x)| = |\theta(p(x))| \geq c_0\varphi(x)\,.\]
	By \eqref{eq.stheta}, for some other $c_1>0$ we get $\mathfrak{s}(\theta(x))\geq\Theta_0+c_1\varphi(x)$ so that
$$
\int_{\Omega_\delta}\left(\varphi(x)/h^{\frac14}-C\right)
e^{2\alpha\varphi(x)/h^{\frac14}}|\psi(x)|^2\dd x\leq 0\,.
$$	
The conclusion of the proposition easily follows by splitting the previous integral into two parts, considering the set $\set{x \in \Omega_\delta : \varphi(x)/h^{\frac14} \geq 2C}$ and its complementary.
\end{proof}

\section{An optimal localization near $\Gamma$}\label{sec.optimalloc}
Thanks to Proposition \ref{prop.1}, we can focus on the spectral analysis of $\mathscr{L}_{h,\delta_1,\delta_2}$. Near $\Gamma$, we can use adapted coordinates $(r,s,t)$ based on the geodesics of $\partial\Omega$ where we recall that $r$ is the geodesic distance to $\Gamma$ in $\partial\Omega$ and $t= \mathrm{dist}(p(x),x)$ for $x \in \Omega_\delta$. Such coordinates are precisely described in Section \ref{sec.coordnearboundary} where the considerations of \cite{HM04} are slightly revisited. In Section \ref{sec.rst}, we reformulate our magnetic problem in these coordinates and rewrite it using a nice and suitable gauge transformation in Section \ref{sec.gauge}. Using these preparations, Section \ref{sec.optimal-r} is then devoted to prove Proposition \ref{prop.optimal-r}, which tells that the eigenfunctions of $\mathscr{L}_{h,\delta_1,\delta_2}$ are exponentially localized with respect to $r$ at the scale $h^{\frac13}$ (and not only at the rough scale $h^{\frac14}$ as shown in the previous section).

\subsection{Coordinates near $\Gamma$}\label{sec.coordnearboundary}
In this section we build rigourously a new chart of coordinates near $\Gamma$ and give some geometric properties.

\subsubsection{Geodesic coordinates near $\Gamma$ in $\D \Omega$}\label{sec.geocoord}

Let us first consider the parametrization $s \mapsto \gamma(s)$ of $\Gamma$ by arc length. Let us explain how to push it by the geodesic flow on $\partial\Omega$. In the case of a surface embedded in $\R^3$, this can be done quite elementarily and we recall it now.

Denoting by $K$ the second fundamental form of $\partial\Omega$ associated to the Weingarten map defined by
\[\forall U,V\in T_x\partial\Omega\,,\quad K_x(U,V)=\langle \dd \mathbf{n}_x(U),V\rangle\,,\]
we can consider the ODE with parameter $s$ and unknown $r \mapsto \gamma(r,s)$
\[\partial^2_r\gamma(r,s)=-K(\partial_r\gamma(r,s), \partial_r\gamma(r,s))\mathbf{n}(\gamma(r,s))\,,\]
with initial conditions
\[\gamma(0,s)=\gamma(s)\,,\quad \partial_r\gamma(0,s)=\gamma'(s)^{\perp}\,,\]
where $\perp$ is understood in the tangent space and so that $(\partial_r\gamma,\partial_s\gamma,\mathbf{n})$ is a \emph{direct} orthonormal basis. This ODE has a unique smooth solution $(-a,a)\times[0,L[\ni(r,s)\mapsto\gamma(r,s)$ where $a>0$ is chosen small enough.

\begin{lemma}
	$\gamma$ is valued in $\partial\Omega$.
\end{lemma}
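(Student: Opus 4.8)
The statement to prove is that the curve $r\mapsto\gamma(r,s)$, defined as the solution of the second-order ODE
\[\partial^2_r\gamma(r,s)=-K(\partial_r\gamma(r,s),\partial_r\gamma(r,s))\,\mathbf{n}(\gamma(r,s))\]
with initial data on $\partial\Omega$, remains in $\partial\Omega$ for all small $r$. Let me sketch how I'd prove this.

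The idea: this is precisely the geodesic equation for the surface $\partial\Omega$ embedded in $\R^3$. A curve $c(r)$ in $\R^3$ that starts on $\partial\Omega$ with velocity tangent to $\partial\Omega$ and whose acceleration is everywhere normal to $\partial\Omega$ — that's the defining property of a geodesic. So I'd want to invoke the standard existence/uniqueness for geodesics on $\partial\Omega$: there's a geodesic $\tilde\gamma(r,s)$ in $\partial\Omega$ with $\tilde\gamma(0,s)=\gamma(s)$ and $\partial_r\tilde\gamma(0,s)=\gamma'(s)^\perp$. Then I'd show $\tilde\gamma$ satisfies the same ODE in $\R^3$. For a surface in $\R^3$, the geodesic equation says the covariant derivative of the velocity vanishes, i.e. the tangential part of $\partial_r^2\tilde\gamma$ is zero, so $\partial_r^2\tilde\gamma$ is purely normal: $\partial_r^2\tilde\gamma = \langle\partial_r^2\tilde\gamma,\mathbf{n}\rangle\mathbf{n}$. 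Differentiating $\langle\partial_r\tilde\gamma,\mathbf{n}(\tilde\gamma)\rangle\equiv 0$ gives $\langle\partial_r^2\tilde\gamma,\mathbf{n}\rangle = -\langle\partial_r\tilde\gamma, d\mathbf{n}(\partial_r\tilde\gamma)\rangle = -K(\partial_r\tilde\gamma,\partial_r\tilde\gamma)$, which is exactly the right-hand side. So $\tilde\gamma$ solves the ODE with the same initial conditions, and by uniqueness of solutions to the ODE, $\gamma=\tilde\gamma$, which lies in $\partial\Omega$.

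**Alternative (self-contained) approach.** If one wants to avoid quoting geodesic theory, work directly with the signed distance function: let $d(x)$ denote the signed distance to $\partial\Omega$, which is smooth in a tubular neighborhood, with $\nabla d = \mathbf{n}$ on $\partial\Omega$. Set $f(r)=d(\gamma(r,s))$. Then $f(0)=0$, $f'(0)=\langle\nabla d,\partial_r\gamma\rangle|_{r=0}=0$. Compute $f''(r) = \langle \mathrm{Hess}\,d\,(\partial_r\gamma),\partial_r\gamma\rangle + \langle\nabla d,\partial_r^2\gamma\rangle$. Using the ODE, the second term is $-K(\partial_r\gamma,\partial_r\gamma)\langle\nabla d,\mathbf{n}\rangle$. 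One shows that when $f(r)=0$ (i.e. on $\partial\Omega$) one has $\langle\mathrm{Hess}\,d\,(U),U\rangle = K(U,U)$ for $U$ tangent and $\mathrm{Hess}\,d\,(\mathbf{n})=0$, so $f''=0$ whenever $f=0$. More carefully, one derives a closed ODE for $(f,f')$ of the form $f'' = g(r,f,f')$ with $g(r,0,0)=0$, hence $f\equiv 0$ is the solution with the given initial data, and by uniqueness $f\equiv 0$, i.e. $\gamma(r,s)\in\partial\Omega$.

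**Main obstacle.** The only real subtlety is the regularity/domain bookkeeping: one must ensure that the ODE solution stays within the tubular neighborhood where $\mathbf{n}$ (or the signed distance $d$) is defined and smooth, so that the right-hand side makes sense and uniqueness applies — but this is exactly what the ''$a>0$ small enough'' in the statement provides, since the solution depends continuously on $r$ and starts on $\partial\Omega$. I expect the cleanest writeup is the first approach (identify with the geodesic flow, then uniqueness), with the computation $\langle\partial_r^2\gamma,\mathbf{n}\rangle=-K(\partial_r\gamma,\partial_r\gamma)$ being the one line of genuine content.
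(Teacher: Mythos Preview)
Your proposal is correct; both approaches work. The paper, however, takes a route that is simpler than either of yours. It computes directly
\[
\partial_r\langle\partial_r\gamma,\mathbf{n}(\gamma)\rangle
=\langle\partial_r^2\gamma,\mathbf{n}\rangle+\langle\partial_r\gamma,\dd\mathbf{n}(\partial_r\gamma)\rangle
=-K(\partial_r\gamma,\partial_r\gamma)+K(\partial_r\gamma,\partial_r\gamma)=0,
\]
using that (with $\mathbf{n}$ extended to a tubular neighborhood) $K(U,V)=\langle \dd\mathbf{n}(U),V\rangle$ by definition. Thus $\langle\partial_r\gamma,\mathbf{n}\rangle\equiv 0$, and since this is $\partial_r\big(d(\gamma(r,s))\big)$ for the signed distance $d$, one gets $d(\gamma)\equiv 0$ immediately. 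Compared with your second approach, the paper notices that $f''=0$ holds \emph{identically}, not just on $\{f=0\}$, so no auxiliary ODE uniqueness argument for $f$ is needed; compared with your first approach, it avoids invoking existence of geodesics on $\partial\Omega$ altogether. Your geodesic-uniqueness argument has the conceptual advantage of explaining \emph{why} the ODE was written that way, while the paper's computation is the most economical proof of the bare statement.
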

\begin{proof}
	We first notice that \[\partial_r\langle\partial_r\gamma,\mathbf{n}\rangle=-K(\partial_r\gamma,\partial_r\gamma)+\langle\dd\mathbf{n}(\partial_r\gamma),\partial_r\gamma\rangle=0\,.\]
	Thus, by using the initial condition, $\partial_r\gamma\in T_{x}\partial\Omega$.
	Now, consider
	\[\partial_r\langle \partial_s\gamma,\mathbf{n}\rangle=\langle\partial_r\partial_s\gamma,\mathbf{n}\rangle+\langle\partial_s\gamma,\dd\mathbf{n}(\partial_r\gamma)\rangle=-\langle\partial_r\gamma,\dd\mathbf{n}(\partial_s\gamma)\rangle+\langle\partial_s\gamma,\dd\mathbf{n}(\partial_r\gamma)\rangle=0\,,\]
	where we used $\partial_r\gamma\in T_{x}\partial\Omega$ and the symmetry of the Weingarten map. Using the initial condition, we see that $\partial_s\gamma\in T_x\partial\Omega$.
\end{proof}
\begin{lemma} \label{lem.normar}
	We have $|\partial_r\gamma(r,s)|=1$ and $\langle\partial_r\gamma,\partial_s\gamma\rangle=0$.
\end{lemma}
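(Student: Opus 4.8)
The statement to prove is Lemma \ref{lem.normar}: along the geodesic flow $r\mapsto\gamma(r,s)$ defined by the ODE $\partial_r^2\gamma=-K(\partial_r\gamma,\partial_r\gamma)\mathbf{n}$ with the prescribed initial conditions, one has $|\partial_r\gamma(r,s)|=1$ and $\langle\partial_r\gamma,\partial_s\gamma\rangle=0$ for all $r$. Both facts are conservation laws: quantities that are constant in $r$ because their $r$-derivative vanishes, so they keep their value at $r=0$, which is known from the initial conditions.

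**Key steps.** First I would differentiate $|\partial_r\gamma|^2=\langle\partial_r\gamma,\partial_r\gamma\rangle$ in $r$: this gives $2\langle\partial_r^2\gamma,\partial_r\gamma\rangle=-2K(\partial_r\gamma,\partial_r\gamma)\langle\mathbf{n},\partial_r\gamma\rangle$, and by the previous lemma $\partial_r\gamma\in T_x\partial\Omega$, so $\langle\mathbf{n},\partial_r\gamma\rangle=0$ and the derivative is zero. Since $|\partial_r\gamma(0,s)|=|\gamma'(s)^\perp|=|\gamma'(s)|=1$ (arc-length parametrization of $\Gamma$, and $\perp$ in the tangent plane preserves length), we get $|\partial_r\gamma(r,s)|\equiv1$. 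Second, for the orthogonality I would differentiate $\langle\partial_r\gamma,\partial_s\gamma\rangle$ in $r$:
\[
\partial_r\langle\partial_r\gamma,\partial_s\gamma\rangle
=\langle\partial_r^2\gamma,\partial_s\gamma\rangle+\langle\partial_r\gamma,\partial_r\partial_s\gamma\rangle
=-K(\partial_r\gamma,\partial_r\gamma)\langle\mathbf{n},\partial_s\gamma\rangle+\tfrac12\partial_s|\partial_r\gamma|^2 .
\]
The first term vanishes because $\partial_s\gamma\in T_x\partial\Omega$ (previous lemma) hence $\langle\mathbf{n},\partial_s\gamma\rangle=0$; the second term vanishes because we have just shown $|\partial_r\gamma|^2\equiv1$, so its $s$-derivative is zero. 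Thus $\langle\partial_r\gamma,\partial_s\gamma\rangle$ is constant in $r$, and at $r=0$ it equals $\langle\gamma'(s)^\perp,\gamma'(s)\rangle=0$ since $\perp$ is the rotation by $\pi/2$ in the tangent plane. Hence $\langle\partial_r\gamma,\partial_s\gamma\rangle\equiv0$.

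**Main obstacle.** There is no serious obstacle; the argument is a pair of routine energy/conservation computations. The only point requiring a little care is the order in which one establishes the two identities: the orthogonality computation uses $|\partial_r\gamma|\equiv1$ (to kill $\partial_s|\partial_r\gamma|^2$), so one must prove the norm statement first. One should also be slightly careful that $\partial_r\partial_s\gamma=\partial_s\partial_r\gamma$ (smoothness of $\gamma$, guaranteed by the cited existence result for the ODE) so that $\langle\partial_r\gamma,\partial_r\partial_s\gamma\rangle=\tfrac12\partial_s\langle\partial_r\gamma,\partial_r\gamma\rangle$, and that the initial conditions are being used correctly — namely that $v\mapsto v^\perp$ on the tangent plane is a linear isometry with $\langle v^\perp,v\rangle=0$, which is exactly how $\partial_r\gamma(0,s)=\gamma'(s)^\perp$ was set up so that $(\partial_r\gamma,\partial_s\gamma,\mathbf{n})$ is a direct orthonormal frame at $r=0$.
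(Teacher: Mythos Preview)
Your proof is correct and follows essentially the same approach as the paper: first show $\langle\partial_r^2\gamma,\partial_r\gamma\rangle=0$ to get $|\partial_r\gamma|\equiv1$, then use this (via $\langle\partial_s\partial_r\gamma,\partial_r\gamma\rangle=\tfrac12\partial_s|\partial_r\gamma|^2=0$) together with $\partial_r^2\gamma\parallel\mathbf{n}\perp\partial_s\gamma$ to conclude $\partial_r\langle\partial_r\gamma,\partial_s\gamma\rangle=0$. Your write-up is in fact a bit more explicit than the paper's, which leaves the vanishing of $\langle\partial_r^2\gamma,\partial_s\gamma\rangle$ implicit.
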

\begin{proof}
	We have \[\langle\partial^2_r\gamma,\partial_r\gamma\rangle=0\,,\]
	and thus $r\mapsto|\partial_r\gamma(r,s)|$ is constant and equals $1$ (due to the normalization of the initial condition). This also implies that
	\[\langle\partial_s\partial_r\gamma,\partial_r\gamma\rangle=0\,.\]
	We deduce that
	\[\partial_r\langle\partial_r\gamma,\partial_s\gamma\rangle=0\,,\]
	and the conclusion follows by considering the initial condition.
\end{proof}
These considerations show that $(r,s) \mapsto \gamma(r,s)$ is a local chart in $\D \Omega$ near $\Gamma$ for which $\Gamma$ is given by $\set{(r,s) : r=0}$. This chart also allows us to easily identify geometric quantities. We directly get the following lemma.
\begin{lemma}\label{lem.metrict=0}
	In this chart, the first fundamental form on $\partial\Omega$ is given by the matrix
	\[g(r,s)=\begin{pmatrix}
		1&0\\
		0&\alpha(r,s)
	\end{pmatrix}
	\,,\quad \alpha(r,s)=|\partial_s\gamma(r,s)|^2\,,\]	
\end{lemma}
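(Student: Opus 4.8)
The plan is to simply read off the matrix of the first fundamental form from its definition in the chart $(r,s)\mapsto\gamma(r,s)$ and to quote the orthonormality relations already established. By definition, in this chart the first fundamental form on $\partial\Omega$ is represented by the Gram matrix
\[g(r,s)=\begin{pmatrix}\langle\partial_r\gamma,\partial_r\gamma\rangle & \langle\partial_r\gamma,\partial_s\gamma\rangle\\[2pt] \langle\partial_s\gamma,\partial_r\gamma\rangle & \langle\partial_s\gamma,\partial_s\gamma\rangle\end{pmatrix}\,.\]
So the whole content is to identify the four entries. First, Lemma \ref{lem.normar} gives $|\partial_r\gamma(r,s)|=1$, hence the $(1,1)$ entry equals $1$; the same lemma gives $\langle\partial_r\gamma,\partial_s\gamma\rangle=0$, hence the two off-diagonal entries vanish. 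Finally, the $(2,2)$ entry is $\langle\partial_s\gamma,\partial_s\gamma\rangle=|\partial_s\gamma(r,s)|^2$, which is exactly the quantity we name $\alpha(r,s)$. This yields the announced block-diagonal form.

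The only point deserving a sentence of care is that $(r,s)\mapsto\gamma(r,s)$ is genuinely a local chart near $\Gamma$, i.e.\ that $g(r,s)$ is positive definite, which by the above amounts to $\alpha(r,s)>0$. At $r=0$ this holds because $\partial_s\gamma(0,s)=\gamma'(s)$ and $\gamma$ is parametrized by arc length, so $\alpha(0,s)=|\gamma'(s)|^2=1$; by smoothness of $\gamma$ (solution of the geodesic ODE with parameter $s$) and compactness in $s$, after shrinking $a>0$ if necessary we keep $\alpha(r,s)$ bounded away from $0$ for $|r|<a$. I would also note in passing that $\alpha(0,s)=1$, which is consistent with the computations used in the preceding lemmas, and that $\partial_r\gamma,\partial_s\gamma$ do span $T_{\gamma(r,s)}\partial\Omega$ (both are tangent by the lemma that $\gamma$ is valued in $\partial\Omega$, and they are linearly independent since orthogonal and nonzero).

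There is no real obstacle here: the statement is a direct bookkeeping consequence of Lemma \ref{lem.normar} together with the fact that the curves $r\mapsto\gamma(r,s)$ are unit-speed geodesics of $\partial\Omega$ emanating orthogonally from $\Gamma$. For the reader who prefers a more structural argument, one may instead invoke the classical Gauss lemma for Fermi (geodesic normal) coordinates based on the submanifold $\Gamma$: the flow $r\mapsto\gamma(r,s)$ being the normal-exponential map along $\Gamma$, the radial direction $\partial_r$ is automatically a unit vector orthogonal to the level hypersurfaces $\{r=\mathrm{const}\}$, which gives $g_{rr}\equiv 1$ and $g_{rs}\equiv 0$ at once; the remaining coefficient is then by definition $\alpha(r,s)=|\partial_s\gamma(r,s)|^2$. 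Either way the proof is short and purely a matter of organizing the identities already proved.
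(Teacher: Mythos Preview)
Your proof is correct and follows exactly the approach the paper implies: the lemma is stated right after Lemma~\ref{lem.normar} with the remark ``We directly get the following lemma,'' meaning the entries of the Gram matrix are read off from $|\partial_r\gamma|=1$ and $\langle\partial_r\gamma,\partial_s\gamma\rangle=0$. Your additional comments on the positivity of $\alpha$ and the Gauss lemma interpretation are fine but go beyond what the paper deems necessary here.
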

Note that on $\Gamma$ some simplifications occurs. Recalling that the geodesic curvature $\kappa_g(s)$ and normal curvature $\kappa_n(s)$ for $\Gamma$ are defined at a point $\gamma(s) = \gamma(0,s)$ by
$$
\partial^2_s\gamma(s)=-\kappa_g(s) \D_s \gamma(s)\times\mathbf{n}(\gamma(s))+\kappa_n(s) \mathbf{n}(\gamma(s))\,,
$$
we get the following.

\begin{lemma}\label{lem.alpha=1}
	For all $s \in [0,2L]$, we have $\alpha(0,s)=1$ and $\partial_s\alpha(0,s)=0$. Moreover we have $\partial_r\alpha(0,s)=-2\kappa_g(s)$.
\end{lemma}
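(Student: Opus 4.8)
The three identities all follow by evaluating at $r=0$ the geometric quantities encoded in the chart $(r,s)\mapsto\gamma(r,s)$ and using the two preceding lemmas. First I would compute $\alpha(0,s)$: by definition $\alpha(r,s)=|\partial_s\gamma(r,s)|^2$, and at $r=0$ we have $\partial_s\gamma(0,s)=\gamma'(s)$, which is a unit vector since $\gamma$ parametrizes $\Gamma$ by arc length; hence $\alpha(0,s)=1$. Next, for $\partial_s\alpha(0,s)$, I differentiate $\alpha=\langle\partial_s\gamma,\partial_s\gamma\rangle$ in $s$ to get $\partial_s\alpha=2\langle\partial_s^2\gamma,\partial_s\gamma\rangle$; at $r=0$, the Frenet-type decomposition $\partial_s^2\gamma(s)=-\kappa_g(s)\,\partial_s\gamma(s)\times\mathbf{n}(\gamma(s))+\kappa_n(s)\,\mathbf{n}(\gamma(s))$ shows that $\partial_s^2\gamma(0,s)$ is orthogonal to $\partial_s\gamma(0,s)$ (both $\partial_s\gamma\times\mathbf{n}$ and $\mathbf{n}$ are orthogonal to $\partial_s\gamma$), so $\partial_s\alpha(0,s)=0$.

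For the last identity I compute $\partial_r\alpha(0,s)=2\langle\partial_r\partial_s\gamma(0,s),\partial_s\gamma(0,s)\rangle=2\langle\partial_s\partial_r\gamma(0,s),\partial_s\gamma(0,s)\rangle$ by Schwarz. Since $|\partial_r\gamma|=1$ identically (Lemma \ref{lem.normar}) and $\langle\partial_r\gamma,\partial_s\gamma\rangle=0$ identically, differentiating the latter in $s$ gives $\langle\partial_s\partial_r\gamma,\partial_s\gamma\rangle=-\langle\partial_r\gamma,\partial_s^2\gamma\rangle$. At $r=0$ we have $\partial_r\gamma(0,s)=\gamma'(s)^{\perp}=\partial_s\gamma(0,s)\times\mathbf{n}(\gamma(s))$ (with the orientation fixed so that $(\partial_r\gamma,\partial_s\gamma,\mathbf{n})$ is a direct orthonormal basis), so pairing the Frenet decomposition of $\partial_s^2\gamma(0,s)$ with $\partial_r\gamma(0,s)$ picks out exactly the $\kappa_g$-component: $\langle\partial_r\gamma,\partial_s^2\gamma\rangle = -\kappa_g(s)\,\langle\partial_s\gamma\times\mathbf{n},\partial_s\gamma\times\mathbf{n}\rangle = -\kappa_g(s)$ (the last norm being $1$ since $\partial_s\gamma$ and $\mathbf{n}$ are orthonormal). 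Combining, $\partial_r\alpha(0,s)=-2\langle\partial_r\gamma,\partial_s^2\gamma\rangle\big|_{r=0}=-2\kappa_g(s)$.

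The only point requiring a little care — and the main obstacle, such as it is — is bookkeeping the orientation convention: one must use precisely that $\partial_r\gamma(0,s)=\partial_s\gamma(0,s)\times\mathbf{n}(\gamma(s))$ rather than its opposite, so that the cross product in the Frenet formula for $\partial_s^2\gamma$ aligns with $\partial_r\gamma$ with a $+1$ coefficient, yielding the sign $-2\kappa_g$ and not $+2\kappa_g$. Everything else is a direct application of Lemmas \ref{lem.normar} and the arc-length normalization, together with the smoothness of $\gamma$ that justifies the interchange $\partial_r\partial_s=\partial_s\partial_r$.
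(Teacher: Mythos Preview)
Your approach is essentially identical to the paper's: both derive $\partial_r\alpha=-2\langle\partial_r\gamma,\partial_s^2\gamma\rangle$ by differentiating the orthogonality $\langle\partial_r\gamma,\partial_s\gamma\rangle=0$ in $s$, and then insert the Frenet decomposition of $\gamma''(s)$. Your treatment of $\alpha(0,s)=1$ and $\partial_s\alpha(0,s)=0$ is slightly more detailed than the paper's one-line remark, but equivalent.

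There is, however, a sign slip in your last line. Having obtained $\langle\partial_r\gamma,\partial_s^2\gamma\rangle=-\kappa_g$, you then write $-2\langle\partial_r\gamma,\partial_s^2\gamma\rangle=-2\kappa_g$, but $-2(-\kappa_g)=+2\kappa_g$. The mismatch is precisely the orientation issue you flagged: the paper's own computation uses $\langle\partial_r\gamma,\partial_s\gamma\times\mathbf{n}\rangle=-1$, i.e.\ $\partial_r\gamma=\mathbf{n}\times\partial_s\gamma$ rather than $\partial_s\gamma\times\mathbf{n}$. With that convention your chain gives $\langle\partial_r\gamma,\partial_s^2\gamma\rangle=+\kappa_g$ and hence $\partial_r\alpha(0,s)=-2\kappa_g$ as stated. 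So the argument is right; only the identification of $\gamma'(s)^\perp$ with the cross product needs the opposite sign.
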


\begin{proof}
	The first two equalities come from $|\partial_s\gamma(0,s)|=1$. The last one follows from	\[\partial_r\alpha(r,s)=2\langle\partial_r\partial_s\gamma(r,s),\partial_s\gamma(r,s)\rangle
=-2\langle\partial_r\gamma(r,s),\partial^2_s\gamma(r,s)\rangle\]
	Thus, when $r=0$
	\[ \partial_r\alpha(0,s)=2\kappa_g(s)\langle\partial_r\gamma(s,0),
\D_s\gamma(s,0)\times\mathbf{n(\gamma(s,0)}\rangle
=-2\kappa_g(s)\,.\]
\end{proof}

\subsubsection{Coordinates near $\Gamma$ in $\Omega_\delta$}
In this section, we choose $\delta$ sufficiently small and consider the local chart of the boundary $(r,s) \mapsto \gamma(r,s)$ defined globally near $\Gamma$. We consider the associated tubular coordinates
\[y=(r,s,t)\mapsto \Phi(r,s,t) =\gamma(s,t)-t\mathbf{n}(\gamma(s,t))=x\,.\]
The map $\Phi$ is a smooth (global in a neighborhood of $\Gamma$) diffeomorphism from $\{t=0\}$ to $\partial\Omega$.
The differential of $\Phi$ can be written as
\begin{equation}\label{eq.dPhiy}
\dd\Phi_{y}=[(\mathrm{Id}-t\dd\mathbf{n})(\partial_r\gamma),(\mathrm{Id}-t\dd\mathbf{n})(\partial_s\gamma),-\mathbf{n}]\,,
\end{equation}
and the Euclidean metrics becomes
\begin{equation} \label{eq:G}
G=(\dd\Phi)^{\mathrm{T}}\dd\Phi=
\begin{pmatrix}
g&0\\
0&1
\end{pmatrix}
\,,
\end{equation}
with the following extended definition of $g$
\[
g(r,s,t)=
\begin{pmatrix}
\|(\mathrm{Id}-t\dd\mathbf{n})(\partial_r\gamma)\|^2&\langle(\mathrm{Id}-t\dd\mathbf{n})(\partial_r\gamma),(\mathrm{Id}-t\dd\mathbf{n})(\partial_s\gamma)\rangle\\
\langle(\mathrm{Id}-t\dd\mathbf{n})(\partial_r\gamma),(\mathrm{Id}-t\dd\mathbf{n})(\partial_s\gamma)\rangle&\|(\mathrm{Id}-t\dd\mathbf{n})(\partial_s\gamma)\|^2
\end{pmatrix}
\,.\]
We recover on $\set{t=0}$ the matrix $g(r,s)=g(r,s,0)$  of the first fundamental form of $\partial\Omega$ as defined in Lemma \ref{lem.metrict=0}.

\subsection{Magnetic Laplacian in the new coordinates}\label{sec.rst}

In this section, we give the new expression of the magnetic Laplacian in the coordinates defined in the preceding section and exhibit an adapted change of gauge. This expression will be of crucial help for proving the refined localization in Section \ref{sec.optimal-r}.

\subsubsection{The magnetic form in tubular coordinates}
We consider the $1$-form
\[\sigma=\mathbf{A}\cdot\dd x=\sum_{\ell=1}^3 A_\ell\dd x_\ell\,.\]
Its exterior derivative is the magnetic $2$-form
\[\omega=\dd \sigma=\sum_{1\leq k<\ell\leq 3}(\partial_k A_{\ell}-\partial_{\ell}A_k)\dd x_k\wedge\dd x_{\ell}\,,\]
which can also be written as
\[\omega=B_3\dd x_1\wedge \dd x_2-B_2\dd x_1\wedge \dd x_3+B_1\dd x_2\wedge \dd x_3\,.\]
Note also that
\[\forall U,V\in\R^3\,,\quad \omega(U,V)=[U,V,\mathbf{B}]=\langle U\times V,\mathbf{B}\rangle\,.\]
Let us now consider the effect of the change of variables $\Phi(y) = x$. We have
\begin{equation}\label{eq.tildeA}
\Phi^*\sigma=\sum_{j=1}^3 \tilde A_j \dd y_j\,,\quad \tilde{\mathbf{A}}=(\dd\Phi)^{\mathrm T}\circ\mathbf{A}\circ\Phi\,,
\end{equation}
and
\[\Phi^*\omega=\Phi^*\dd\sigma=\dd (\Phi^*\sigma)=[\cdot,\cdot,\nabla\times\tilde{\mathbf{A}}]\,.\]
This also gives that, for all $U,V\in\R^3$,
\[[\dd\Phi(U),\dd\Phi(V),\mathbf{B}]=[U,V,\nabla\times\tilde{\mathbf{A}}]\,,\quad
\mbox{ or } \det\dd\Phi[\cdot,\cdot,\dd\Phi^{-1}(\mathbf{B})]=[\cdot,\cdot,\nabla\times\tilde{\mathbf{A}}]\,,\]
so that,
\[\nabla\times\tilde{\mathbf{A}}=(\det\dd\Phi)\, \dd\Phi^{-1}(\mathbf{B})\,.\]
Note then that using \eqref{eq:G} we get
\begin{equation}\label{eq.coordinatesnewB}
|g|^{-\frac12}\nabla\times\tilde{\mathbf{A}}=\mathcal{B}\,.
\end{equation}

The coordinates of vector $(\bbb_1,\bbb_2,\bbb_3)$ of $\mathcal{B}(y)=\dd\Phi^{-1}_y(\mathbf{B}(x))$ correspond then to the coordinates of $\mathbf{B}(y)$ in the image of the canonical basis by $\dd\Phi_y$.
Note that until now, computations were valid for a wider class of change of variables than the one built in Section \ref{sec.coordnearboundary}. Let us now focus on this case when $y=(r,s,t)\mapsto \Phi(r,s,t) =\gamma(s,t)-t\mathbf{n}(\gamma(s,t))=x$.

We get that
\begin{equation}\label{eq.e3}
\mathbf{B}=\mathbf{e}_3=\mathrm{d}\Phi(\mathcal{B})=\mathcal{B}_1(\mathrm{Id}-t\mathrm{d}\mathbf{n})(\partial_r\gamma)+\mathcal{B}_2(\mathrm{Id}-t\mathrm{d}\mathbf{n})(\partial_s\gamma)-\mathcal{B}_3\mathbf{n}\,,
\end{equation}
so that, on the boundary $\set{(r,s,0)}$ (near $\Gamma$) we have
\begin{equation*}
\mathbf{e}_3=\mathcal{B}_1\partial_r\gamma+\mathcal{B}_2\partial_s\gamma-\mathcal{B}_3\mathbf{n}\,,
\end{equation*}
and in particular
\begin{equation}\label{eq.norm1}
	\mathcal{B}^2_1(r,s,0)+\alpha(r,s)\mathcal{B}^2_2(r,s,0)+\mathcal{B}_3^2(r,s,0)=1\,.
\end{equation}
When we are on $\Gamma = \set{(0,s,0)}$, we can describe how the magnetic field is tangent to the boundary, depending on the curvilinear coordinate $s$ and since $\mathbf{e}_3$ is orthogonal to $\mathbf{n}(0,s,0)$ we have
\begin{equation} \label{lem.e3r=t=0}
\mathbf{e}_3=\mathcal{B}_1(0,s,0)\partial_r\gamma(0,s)+\mathcal{B}_2(0,s,0)\partial_s\gamma(0,s)\,.
\end{equation}
By Lemma \ref{lem.normar}
	\[\mathcal{B}_1(0,s,0)=\langle \mathbf{e}_3, \partial_r\gamma\rangle\,,\qquad \mathcal{B}_2(0,s,0)=\left\langle \mathbf{e}_3, \partial_s\gamma\right\rangle \,.\]
and the following degeneracy result holds.

\begin{lemma}
	We have $\partial_t\mathcal{B}_3(r,s,0)=0$.
\end{lemma}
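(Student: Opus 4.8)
The plan is to show the stronger statement that $\mathcal{B}_3$ does not depend on $t$ at all; it equals $-n_3\circ\gamma$, so that $\partial_t\mathcal{B}_3\equiv 0$, in particular on $\set{t=0}$. The key observation is that, by construction of the tubular coordinates, the base point on $\partial\Omega$ attached to $x=\Phi(r,s,t)$ is $\gamma(r,s)$, which is independent of $t$; consequently the unit normal appearing in the frame $\dd\Phi_y$ is $\mathbf{n}(\gamma(r,s))$ and carries no $t$-dependence.

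\textbf{Main step.} I would start from \eqref{eq.e3}, which expresses $\mathbf{e}_3=\dd\Phi_y(\mathcal{B})$ in the frame given by the columns of $\dd\Phi_y$ in \eqref{eq.dPhiy}. Recall that the Weingarten map $\dd\mathbf{n}_{\gamma(r,s)}$ maps $T_{\gamma(r,s)}\partial\Omega$ into itself, and that $\partial_r\gamma(r,s),\partial_s\gamma(r,s)\in T_{\gamma(r,s)}\partial\Omega$; hence the first two columns $(\mathrm{Id}-t\dd\mathbf{n})(\partial_r\gamma)$ and $(\mathrm{Id}-t\dd\mathbf{n})(\partial_s\gamma)$ of $\dd\Phi_y$ are tangent to $\partial\Omega$ at $\gamma(r,s)$, hence orthogonal to $\mathbf{n}=\mathbf{n}(\gamma(r,s))$. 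Taking the scalar product of \eqref{eq.e3} with $\mathbf{n}$ and using $|\mathbf{n}|=1$ then gives
\[\langle \mathbf{e}_3,\mathbf{n}(\gamma(r,s))\rangle=-\mathcal{B}_3(r,s,t)\,,\]
that is, $\mathcal{B}_3(r,s,t)=-n_3(\gamma(r,s))$. The right-hand side does not involve $t$, so $\partial_t\mathcal{B}_3(r,s,0)=0$ (and indeed $\partial_t\mathcal{B}_3\equiv 0$). As a consistency check, on $\Gamma=\set{(0,s,0)}$ one has $n_3(\gamma(0,s))=0$, so $\mathcal{B}_3(0,s,0)=0$, in agreement with the fact that \eqref{lem.e3r=t=0} contains no $\mathbf{n}$-component.

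\textbf{Expected difficulty.} There is essentially no analytic obstacle here: the statement reduces to the orthogonal decomposition of the constant vector $\mathbf{e}_3$ in the moving frame $\dd\Phi_y$, and the only point requiring care is the bookkeeping ensuring that the normal $\mathbf{n}$ entering that frame is evaluated at the ($t$-independent) boundary point $\gamma(r,s)$, together with the standard fact that the shape operator preserves the tangent plane so that the first two frame vectors are $\mathbf{n}$-orthogonal.
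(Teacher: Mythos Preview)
Your proof is correct and in fact slightly cleaner than the paper's. The paper differentiates \eqref{eq.e3} with respect to $t$ first and then, at $t=0$, reads off the $\mathbf{n}$-component of the resulting identity to conclude $\partial_t\mathcal{B}_3(r,s,0)=0$. You instead take the inner product of \eqref{eq.e3} with $\mathbf{n}(\gamma(r,s))$ \emph{before} differentiating, using the same two ingredients (that $\mathbf{n}$ is evaluated at the $t$-independent base point $\gamma(r,s)$, and that the Weingarten map preserves the tangent plane so the first two columns of $\dd\Phi_y$ are orthogonal to $\mathbf{n}$). This yields the explicit formula $\mathcal{B}_3(r,s,t)=-n_3(\gamma(r,s))$, hence the stronger conclusion $\partial_t\mathcal{B}_3\equiv 0$ for all $t$, not only at $t=0$. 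The underlying geometric content is identical; your route simply avoids the intermediate differentiation step and produces a more informative statement.
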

\begin{proof}
Taking the derivative with respect to $t$ of \eqref{eq.e3}, we get, when $t=0$, \[\mathcal{B}_1\dd\mathbf{n}(\partial_r\gamma)+\mathcal{B}_2\dd\mathbf{n}(\partial_s\gamma)=\partial_t\mathcal{B}_1\partial_r\gamma+\partial_t\mathcal{B}_2\partial_s\gamma+\partial_t\mathcal{B}_3\mathbf{n}\,.\]
	Thus, $\partial_t\mathcal{B}_3(r,s,0)=0$.
\end{proof}

For further use let us eventually give also the expressions of the coordinates of $\bbb(r,s,0)$ with respect to the angles $\phi(s)$ defined in Definition \ref{def.phi} and $\theta(x)$ defined in Definition \ref{def.theta}. The natural extension of their definition as functions of $(r,s)$ for any $y=(r,s,0)$ of the (image of ) the boundary is the following, and is linked to the normalization property \eqref{eq.norm1}.

\begin{definition} \label{def.phithetars}
We consider $\phi(r,s) \in (-\pi, \pi)$ and $\theta(r,s) \in (-\frac\pi2, \frac\pi2)$ the angles defined by
\begin{equation*}
\begin{split}
& \mathcal{B}_1(r,s,0)=  \cos\phi(r,s)\cos\theta(r,s)\,,  \quad\alpha^{\frac12}(r,s)\mathcal{B}_2(r,s,0)=\sin\phi(r,s)\cos\theta(r,s)\,, \\
& \mathcal{B}_3(r,s,0)=\sin\theta(r,s)\,
\end{split}
\end{equation*}
\end{definition}

Note that this definition is consistent with Definitions \ref{def.phi} and \ref{def.theta} and the associated figures, possibly after identifying any point $x$ of the boundary with its image in coordinates $(r,s,0)$.

\subsubsection{The magnetic Laplacian in tubular coordinates}\label{sec.quadraticy}
Recall that the quadratic form associated with $\mathscr{L}_h$ is given in the original coordinates by
\[\mathscr{Q}_h(\psi)=\int_{\Omega}|(-ih\nabla-\mathbf{A})\psi|^2\dd x\,,\quad \forall \psi\in H^1(\Omega)\,.\]
If the support of $\psi$ is close enough to the boundary, we may express $\mathscr{Q}_h(\psi)$ in the local chart given by $\Phi(y) = x$. Letting $\tilde\psi(y)=\psi\circ\Phi(y)$, we have then
\[\mathscr{Q}_h(\psi)=\int \langle G^{-1}(-ih\nabla_y-\tilde{\mathbf{A}}(y))\tilde\psi,(-ih\nabla_y-\tilde{\mathbf{A}}(y))\tilde\psi \rangle |g|^{\frac12}\dd y\,.\]
In the Hilbert space $L^2(|g|^{\frac12}\dd y)$, the operator locally takes the form
\begin{equation}\label{eq.magLaptilde}
|g|^{-\frac12}(-ih\nabla_y-\tilde{\mathbf{A}}(y))\cdot|g|^{\frac12}G^{-1}(-ih\nabla_y-\tilde{\mathbf{A}}(y))\,.
\end{equation}

The expression of the operator $\mathscr{L}_{h,\delta_1,\delta_2}$ in coordinates $(r,s,t)$ of $y$  after transformation of \eqref{eq.magLaptilde} is then
\begin{multline}\label{eq.tildeLhd}
\widetilde{\mathscr{L}}_{h,\delta_1,\delta_2}  = |g|^{-\frac12}(hD_r-\tilde A_1)|g|^{\frac12}g^{11}(hD_r-\tilde A_1)+|g|^{-\frac12}(hD_s-\tilde A_2)|g|^{\frac12}g^{22}(hD_s-\tilde A_2)\\
 +|g|^{-\frac12}(hD_r-\tilde A_1)|g|^{\frac12}g^{12}(hD_s-\tilde A_2)+|g|^{-\frac12}(hD_s-\tilde A_2)|g|^{\frac12}g^{12}(hD_r-\tilde A_1)\\
 +|g|^{-\frac12}(hD_t)|g|^{\frac12}(hD_t)\,,
\end{multline}
where the coefficients can be described as follows
\begin{equation}\label{eq.approxmetric}
\begin{split}
|g(r,s,t)|^{\frac12}&=\alpha(r,s)^{\frac12}+tk(r,s))+\mathscr{O}(t^2),\qquad
g^{11}(r,s,t)=1+tk_1(r,s)+\mathscr{O}(t^2)\,,\\
g^{22}(r,s,t)&=\alpha(r,s)^{-1}+tk_2(r,s)+\mathscr{O}(t^2)\,,\qquad
g^{12}(r,s,t)=tk_{12}(r,s)+\mathscr{O}(t^2)\,,
\end{split}
\end{equation}
where the functions $k$, $k_1$, $k_2$, and $k_{12}$ are smooth.

\subsection{A change of gauge}\label{sec.gauge}
The following proposition provides us with an appropriate gauge reducing the third coordinate of the vector potential to zero.

\begin{proposition}\label{prop.changegauge}
There exists a smooth function $\phi=\phi(r,s,t)$, on $(-a,a)\times [0,2L)\times (0,\delta)$, $2L$-periodic with respect to $s$, such that
	\[\begin{split}
		\tilde A_1-\partial_1\phi&=\int_0^t[|g|^{\frac12}\mathcal{B}_2](r,s,\tau)\dd\tau\,,\\
		\tilde A_2-\partial_2\phi&= \seq{f}+\!\!\int_0^r [|g|^{\frac12}\mathcal{B}_3](\rho,s,t)\dd\rho+\int_0^r\int_0^t\partial_2[|g|^{\frac12}\mathcal{B}_2](\rho,s,\tau)\dd\rho\dd\tau-\!\!\int_0^t [|g|^{\frac12}\mathcal{B}_1](0,s,\tau) \dd\tau\,,\\
		\tilde A_3-\partial_3\phi&=0\,,
	\end{split}\]	
with $L$ being the half-length of $\Gamma$ and
\[\seq{f}=\frac{1}{|\Gamma|}\int_{\partial\Omega^+}n_3\,\mathrm{d}S\,,\]
where $\partial\Omega^+=\{x\in\partial\Omega : n_3(x)=\mathbf{n}(x)\cdot\mathbf{e}_3>0\}$ and $\mathrm{d}S$ is the usual surface measure on $\partial\Omega$ induced by the Euclidean measure in $\mathbb{R}^3$.

\end{proposition}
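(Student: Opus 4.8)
The plan is to write $\phi$ explicitly as an iterated primitive --- first in $t$, then in $r$, then in $s$ --- and to verify the three identities by hand, using the structure equations for $\tilde{\mathbf A}$ in the $(r,s,t)$-chart together with the closedness of the pulled-back magnetic form. Set $b_j:=|g|^{\frac12}\mathcal B_j$ for $j=1,2,3$. By \eqref{eq.coordinatesnewB} we have $\nabla_y\times\tilde{\mathbf A}=|g|^{\frac12}\mathcal B$, which in the $(r,s,t)$ coordinates reads
\[\partial_r\tilde A_2-\partial_s\tilde A_1=b_3\,,\qquad \partial_t\tilde A_1-\partial_r\tilde A_3=b_2\,,\qquad \partial_s\tilde A_3-\partial_t\tilde A_2=b_1\,.\]
Moreover $\Phi^*\omega=\dd(\Phi^*\sigma)$ is exact, hence closed, so $\partial_rb_1+\partial_sb_2+\partial_tb_3=0$. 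The candidate is
\[\phi(r,s,t)=\int_0^t\tilde A_3(r,s,\tau)\,\dd\tau+\int_0^r\tilde A_1(\rho,s,0)\,\dd\rho+\int_0^s\bigl(\tilde A_2(0,\varsigma,0)-\seq{f}\bigr)\,\dd\varsigma\,,\]
which is smooth and, granting the periodicity discussed below, $2L$-periodic in $s$ on $(-a,a)\times[0,2L)\times(0,\delta)$.

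The identity $\tilde A_3-\partial_3\phi=0$ is immediate from the first summand. For the first identity I would differentiate $\phi$ in $r$, substitute $\partial_r\tilde A_3=\partial_t\tilde A_1-b_2$ inside the $t$-integral and integrate in $\tau$; the boundary term $\tilde A_1(r,s,0)$ then cancels the $r$-derivative of the second summand, leaving $\tilde A_1-\partial_1\phi=\int_0^tb_2(r,s,\tau)\,\dd\tau$, as required. For the second identity, differentiating $\phi$ in $s$ and using $\partial_s\tilde A_3=\partial_t\tilde A_2+b_1$ gives, from the first summand, $\tilde A_2(r,s,t)-\tilde A_2(r,s,0)+\int_0^tb_1(r,s,\tau)\,\dd\tau$; from the second summand, after writing $\partial_s\tilde A_1(\rho,s,0)=\partial_\rho\tilde A_2(\rho,s,0)-b_3(\rho,s,0)$ on $\{t=0\}$, one gets $\tilde A_2(r,s,0)-\tilde A_2(0,s,0)-\int_0^rb_3(\rho,s,0)\,\dd\rho$; and the third summand contributes $\tilde A_2(0,s,0)-\seq{f}$. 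Adding up, $\tilde A_2-\partial_2\phi=\seq{f}+\int_0^rb_3(\rho,s,0)\,\dd\rho-\int_0^tb_1(r,s,\tau)\,\dd\tau$. Finally, the closedness relation $\partial_tb_3=-\partial_rb_1-\partial_sb_2$ lets me replace $\int_0^rb_3(\rho,s,0)\,\dd\rho$ by $\int_0^rb_3(\rho,s,t)\,\dd\rho+\int_0^t\bigl(b_1(r,s,\tau)-b_1(0,s,\tau)\bigr)\,\dd\tau+\int_0^r\int_0^t\partial_sb_2(\rho,s,\tau)\,\dd\tau\,\dd\rho$, and substituting this produces exactly the announced formula for $\tilde A_2-\partial_2\phi$.

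The only genuinely non-computational point --- and the reason $\seq{f}$ is forced to equal the claimed flux average --- is the $2L$-periodicity of $\phi$ in $s$. Since $\mathbf A$ is a genuine function on $\overline\Omega$ and the tubular chart is $2L$-periodic in $s$, the functions $\tilde A_1,\tilde A_3$ are $2L$-periodic in $s$, so the first two summands of $\phi$ are automatically $2L$-periodic; hence $\phi$ is $2L$-periodic in $s$ if and only if $\int_0^{2L}\bigl(\tilde A_2(0,\varsigma,0)-\seq{f}\bigr)\,\dd\varsigma=0$, i.e. $\seq{f}=\frac1{2L}\int_0^{2L}\tilde A_2(0,\varsigma,0)\,\dd\varsigma$. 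Now $\tilde A_2(0,s,0)\,\dd s$ is the pullback of $\sigma=\mathbf A\cdot\dd x$ along $s\mapsto\gamma(s)$, so this average equals $\frac1{|\Gamma|}\oint_\Gamma\mathbf A\cdot\dd\gamma$ since $|\Gamma|=2L$; applying Stokes' theorem to the surface $\partial\Omega^+$, whose oriented boundary is $\Gamma$, and using $\omega(U,V)=\langle U\times V,\mathbf e_3\rangle$ --- so that the restriction of $\omega$ to $\partial\Omega$ is $\langle\mathbf n,\mathbf e_3\rangle\,\dd S=n_3\,\dd S$ --- this circulation equals $\int_{\partial\Omega^+}n_3\,\dd S$. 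Thus $\seq{f}=\frac1{|\Gamma|}\int_{\partial\Omega^+}n_3\,\dd S$, exactly as in the statement, and the construction is complete. I expect the main nuisance to be the bookkeeping of orientation conventions --- the signs in the structure equations, the orientation of $(\partial_r\gamma,\partial_s\gamma,\mathbf n)$ fixed in Section~\ref{sec.coordnearboundary}, and the orientation of $\Gamma$ as $\partial(\partial\Omega^+)$ --- which must all be made consistent; once they are, the remaining steps are routine.
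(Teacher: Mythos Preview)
Your proof is correct and, at bottom, identical to the paper's: your explicit $\phi$ is exactly the sum $\psi_1+\psi_2+\psi_3$ of the three successive gauge changes the paper constructs, and your use of the structure equations, the closedness relation $\partial_rb_1+\partial_sb_2+\partial_tb_3=0$, and Stokes' theorem for $\seq{f}$ matches the paper's argument term for term. The only difference is presentational---you write $\phi$ down in one shot and verify, whereas the paper discovers the three pieces iteratively---so nothing further is needed.
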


\begin{proof}
	Let us recall \eqref{eq.coordinatesnewB}.
	We have
	\[\begin{split}
		\partial_2\tilde{A}_3-\partial_3\tilde A_2&=|g|^{\frac12}\mathcal{B}_1\,,\\
		\partial_3\tilde{A}_1-\partial_1\tilde A_3&=|g|^{\frac12}\mathcal{B}_2\,,\\
		\partial_1\tilde{A}_2-\partial_2\tilde A_1&=|g|^{\frac12}\mathcal{B}_3\,.
	\end{split}\]
	Considering
	\[\psi_1(r,s,t)=\int_0^t\tilde A_3(r,s,\tau)\mathrm{d}\tau\,,\]
	and $\widehat{\mathbf{A}}=\widetilde{\mathbf{A}}-\nabla\psi_1$, we see that $\hat A_3=0$ and that $\nabla\times\widehat{\mathbf{A}}=|g|^{\frac12}\mathcal{B}$. Thus,
		\[\begin{split}
	-\partial_3\hat A_2&=|g|^{\frac12}\mathcal{B}_1\,,\\
	\partial_3\hat{A}_1&=|g|^{\frac12}\mathcal{B}_2\,,\\
	\partial_1\hat{A}_2-\partial_2\hat A_1&=|g|^{\frac12}\mathcal{B}_3\,.
	\end{split}\]
The second equation provides us with
	\[\hat A_1(r,s,t)=\int_0^t[|g|^{\frac12}\mathcal{B}_2](r,s,\tau)\dd\tau+\varphi_1(r,s)\,,\]
whereas the last equation gives
\[\hat A_2(r,s,t)=\int_0^r [|g|^{\frac12}\mathcal{B}_3](\rho,s,t)\dd\rho+\int_0^r\partial_2\hat A_1(\rho,s,t)\dd\rho+\varphi_2(s,t)\,,\]	
where $\varphi_1$ and $\varphi_2$ are smooth functions that are $2L$-periodic with respect to $s$. We get
\begin{multline*}
\hat A_2(r,s,t)=\int_0^r [|g|^{\frac12}\mathcal{B}_3](\rho,s,t)\dd\rho+\int_0^r\int_0^t\partial_2[|g|^{\frac12}\mathcal{B}_2](\rho,s,\tau)\dd\rho\dd\tau\\
+\varphi_2(s,t)+\int_0^r\partial_2\varphi_1(\rho,s)\dd\rho\,.
\end{multline*}
Considering $\check{\mathbf{A}}=\hat{\mathbf{A}}-\nabla\psi_2$ with $\psi_2(r,s,t)=\int_0^r\varphi_1(\rho,s)\dd\rho$, we find that
	\[\check A_1(r,s,t)=\int_0^t[|g|^{\frac12}\mathcal{B}_2](r,s,\tau)\dd\tau\,,\]
\[
\check A_2(r,s,t)=\int_0^r [|g|^{\frac12}\mathcal{B}_3](\rho,s,t)\dd\rho+\int_0^r\int_0^t\partial_2[|g|^{\frac12}\mathcal{B}_2](\rho,s,\tau)\dd\rho\dd\tau+\varphi_2(s,t)\,,
\]
and $\check A_3=0$. Then, using that $\nabla \cdot (|g|^{\frac12} \bbb) = 0$ from \eqref{eq.coordinatesnewB}, we get
\begin{equation*}
\begin{split}
|g|^{\frac12}\mathcal{B}_1=-\partial_3\check A_2
&=-\int_0^r \partial_3[|g|^{\frac12}\mathcal{B}_3](\rho,s,t)\dd\rho-\int_0^r\partial_2[|g|^{\frac12}\mathcal{B}_2](\rho,s,t)\dd\rho-\partial_3\varphi_2(s,t)\\
&=\int_0^r \partial_1[|g|^{\frac12}\mathcal{B}_1](\rho,s,t)\dd\rho-\partial_3\varphi_2(s,t)\\
&=-[|g|^{\frac12}\mathcal{B}_1](0,s,t)+[|g|^{\frac12}\mathcal{B}_1](r,s,t)-\partial_3\varphi_2(s,t)\,,
\end{split}
\end{equation*}
so that
\[\partial_3\varphi_2(s,t)=-[|g|^{\frac12}\mathcal{B}_1](0,s,t)\,.\]
Thus, there exists $a$ 2L-periodic function $s \mapsto f(s)=\varphi_2(s,0)$ such that
\[\varphi_2(s,t)= f(s) -\int_0^t[|g|^{\frac12}\mathcal{B}_1](0,s,\tau)\dd\tau\,.\]
We can now perform a last change of gauge by considering the following $2L$-periodic function
\[\psi_3(s)=\int_0^{ s }\left(f( \sigma )-\seq{f}\right)\dd \sigma\,,\quad \seq{f}=\frac{1}{2L}\int_0^{2L}f(\sigma)\dd \sigma\,.\]
We let $\breve{\mathbf{A}}=\check{\mathbf{A}}-\nabla\psi_3=\widetilde{\mathbf{A}}-\nabla(\psi_1+\psi_2+\psi_3)$ and we have
\[\breve{A}_1(r,s,t)=\int_0^t[|g|^{\frac12}\mathcal{B}_2](r,s,\tau)\dd\tau\,,\]
\[
\breve{A}_2(r,s,t)=\seq{f}+\int_0^r [|g|^{\frac12}\mathcal{B}_3](\rho,s,t)\dd\rho+\int_0^r\int_0^t\partial_2[|g|^{\frac12}\mathcal{B}_2](\rho,s,\tau)\dd\rho\dd\tau-\int_0^t[|g|^{\frac12}\mathcal{B}_1](0,s,\tau)\dd\tau\,,
\]
and $\breve{A}_3=0$.

Notice that
\[ f(s)=\check A_2(0,s,0)=\hat{A}_2(0,s,0)=\tilde{A}_2(0,s,0)=\mathbf{A}(\gamma(s))\cdot\gamma'(s)\,,\]
where we used \eqref{eq.dPhiy} and \eqref{eq.tildeA}. This shows that
\[ \seq{f}=\frac{1}{|\Gamma|}\int_{\Gamma}\mathbf{A}\cdot \overrightarrow{\dd\ell}=\frac{1}{|\Gamma|}\int_{\partial\Omega^+}\mathbf{B}\cdot\mathbf{n}\,\dd S=\frac{1}{|\Gamma|}\int_{\partial\Omega^+}n_3\dd S\,,\]
thanks to the Ostrogradski-Stokes formula and the fact that $\mathbf{B}=\mathbf{e}_3$.
\end{proof}
\begin{remark}\label{rem.tildeA}
Thanks to Proposition \ref{prop.changegauge}, the spectral analysis of $\mathscr{L}_{h,\delta}$ can be done by assuming that
	\[\begin{split}
	\tilde A_1&=\int_0^t[|g|^{\frac12}\mathcal{B}_2](r,s,\tau)\dd\tau\,,\\
	\tilde A_2&= \seq{f}+\int_0^r [|g|^{\frac12}\mathcal{B}_3](\rho,s,t)\dd\rho+\int_0^r\int_0^t\partial_2[|g|^{\frac12}\mathcal{B}_2](\rho,s,\tau)\dd\rho\dd\tau-\int_0^t [|g|^{\frac12}\mathcal{B}_1](0,s,\tau) \dd\tau\,,\\
	\tilde A_3&=0\,.
\end{split}\]
In fact, by considering the $2L$-periodic functions $(e^{\frac{im}{2L}s})_{m\in\mathbb{Z}}$, we can even replace $\seq{f}$ by $\seq{f}-h\frac{m}{2L}$. There exists a unique $(m_h,\mathfrak{f}_h)\in\mathbb{Z}\times\left(-\frac{1}{2L},\frac{1}{2L}\right)$ such that $\seq{f}=m_h\frac{h}{2L}+h\mathfrak{f}_h$. Thus, we assume, as we may, that $\seq{f}=h\mathfrak{f}_h$.
\end{remark}

\subsection{Optimal localization near $\Gamma$}\label{sec.optimal-r}
Now comes the most important result of the section: the refined exponential localization of the eigenfunctions near $r=0$, at the scale $h^{\frac13}$, to be compared to the rough one at scale $h^{\frac14}$ stated in Proposition \ref{prop.roughlocGamma}.

\begin{proposition}\label{prop.optimal-r}
For all eigenfunctions $\varphi$ (of $\mathscr{L}_{h,\delta_1,\delta_2}$) associated with $\lambda\leq \Theta_0h+Ch^{\frac43}$, we have
 \[\int_{\Omega_{\delta_1,\delta_2}}e^{2r(x)/h^{\frac13}}|\varphi(x)|^2\dd x\leq C\|\varphi\|^2\,,\]
 and
 \[\mathscr{Q}_{h,{\delta_1,\delta_2}}(e^{r/h^{\frac13}}\varphi)\leq Ch\|\varphi\|^2\,\]
 where $\delta_1=h^{\frac12-\eta}$ and $\delta_2=h^{\frac14-\eta}$ for some $\eta\in\left(0,\frac{1}{12}\right)$.
\end{proposition}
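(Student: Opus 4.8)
\emph{Setting and the model operator.}
By Propositions \ref{prop.roughlocGamma} and \ref{prop.loct}, an eigenfunction $\varphi$ attached to an eigenvalue $\lambda\leq\Theta_0h+Ch^{\frac43}$ is, up to exponentially small errors, supported in $\Omega_{\delta_1,\delta_2}$ and concentrated where $t\lesssim h^{\frac12}$, so that one may work in the coordinates $(r,s,t)$ of Section \ref{sec.coordnearboundary} and in the gauge of Remark \ref{rem.tildeA}. The first step is a partition of unity in the arc-length variable $s$ at a suitable intermediate scale; freezing $s=s_0$ on each piece and Taylor-expanding the metric through \eqref{eq.approxmetric} and the vector potential through Remark \ref{rem.tildeA} --- using $\tilde A_1=t\,\mathcal{B}_2(r,s,0)+\mathscr{O}(t^2)$, $\mathcal{B}_1(0,s,0)=\cos\phi(s)$ and $\mathcal{B}_2(0,s,0)=\sin\phi(s)$ (Definitions \ref{def.phi}, \ref{def.phithetars} and \eqref{eq.norm1}), together with $\int_0^r[|g|^{\frac12}\mathcal{B}_3](\rho,s,0)\,\dd\rho=-\frac12\beta(s)r^2+\mathscr{O}(r^3)$, which records the linear vanishing of $n_3$ on $\Gamma$ from Assumption \ref{hyp.genericcancellation} --- yields a model quadratic form on the half-space $\{t>0\}$. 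As the $\mathscr{O}(h^{\frac54})$ error in Proposition \ref{prop.roughlowerbound} already shows, a purely spatial partition of unity cannot reach the scale $h^{\frac13}$, and this model must be analysed microlocally.

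\emph{Reduction to a Montgomery operator.}
I would then rotate the momenta $(D_r,D_s)$ by the angle $\phi(s_0)$ so as to align the $t$-linear part of the model with that of a single de Gennes operator $\mathfrak{h}_\xi$, and perform a partial Fourier transform in the tangential variable conjugate to $\xi$; this turns $\xi$ into a multiplication parameter and lets one use $\mathfrak{h}_\xi\geq\mu_1^{\mathrm{dG}}(\xi)\geq\Theta_0$ together with $\mu_1^{\mathrm{dG}}(\xi)=\Theta_0+\alpha_0(\xi-\xi_0)^2+\mathscr{O}((\xi-\xi_0)^3)$. A microlocal partition of unity localizing $\xi$ near $\xi_0$, followed by projection onto the ground state of $\mathfrak{h}_{\xi_0}$, reduces the problem on each piece to a two-dimensional differential operator of Montgomery type, carrying the confining term $(\,\cdot\,-\frac12\beta(s_0)r^2)^2$ and with relevant eigenvalues of size $\mathscr{O}(h^{\frac43})$. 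Thanks to the localizations above, all the errors created by the rotation, the Fourier transform and the projection can be kept $o(h^{\frac43})$; keeping track of the remaining Taylor remainders is exactly what forces $\eta<\frac1{12}$.

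\emph{Commutator estimate and transfer back.}
After rescaling $r$ at the scale $h^{\frac13}$, the Montgomery-type operator becomes a fixed operator of the form $-\partial_\varrho^2+(p-\frac12\beta(s_0)\varrho^2)^2+\dots$, whose eigenfunctions decay super-exponentially in $\varrho$; equivalently, I would run a one-line commutator (IMS) estimate with the weight $w=r/h^{\frac13}$ --- whose cost $h^2|\partial_r w|^2=h^{\frac43}$ is beaten, on $\{|r|\geq Mh^{\frac13}\}$ with $M$ large, by the confining potential $(\,\cdot\,-\frac12\beta(s_0)r^2)^2\geq c\,M^4h^{\frac43}$ once the tangential momentum $hD_s$ has been microlocalized to size $\mathscr{O}(h^{\frac23})$ (which the same analysis provides) --- obtaining, together with a Chebyshev splitting, the bound $\int e^{2w}|u|^2\leq C\|u\|^2$ for the reduced eigenfunction $u$. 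Summing the $s$-partition (the weight depends only on $r$) and transferring back through the unitary transformations gives $\int_{\Omega_{\delta_1,\delta_2}}e^{2r/h^{\frac13}}|\varphi|^2\,\dd x\leq C\|\varphi\|^2$. Inserting this into the magnetic energy identity
\[
\mathscr{Q}_{h,\delta_1,\delta_2}(e^{r/h^{\frac13}}\varphi)=\lambda\|e^{r/h^{\frac13}}\varphi\|^2+h^{\frac43}\!\!\int_{\Omega_{\delta_1,\delta_2}}\!\!g^{11}\,e^{2r/h^{\frac13}}|\varphi|^2\,|g|^{\frac12}\,\dd y\,,
\]
valid because the weight depends only on $r$, and using $\lambda\leq\Theta_0h+Ch^{\frac43}$ and $g^{11}=1+\mathscr{O}(h^{\frac12-\eta})$ on $\Omega_{\delta_1,\delta_2}$, yields $\mathscr{Q}_{h,\delta_1,\delta_2}(e^{r/h^{\frac13}}\varphi)\leq Ch\|\varphi\|^2$.

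\emph{Main obstacle.}
The heart of the matter is the reduction chain of the second step: the de Gennes operator only produces the leading term $\Theta_0h$, so all the geometry that lives at scale $h^{\frac13}$ sits in the remainder, and the sequence rotation $\to$ partial Fourier transform $\to$ microlocalization $\to$ projection must be carried out with errors $o(h^{\frac43})$, small enough to be beaten by the weight cost $h^{\frac43}$. The delicate points are the exact bookkeeping of the Taylor remainders of the metric and of the vector potential, the commutators generated by the rotation and the microlocal cut-offs, and checking that the microlocal partition genuinely confines the parameters to the region where the Montgomery operator's quartic potential governs the behaviour in $r$; it is this last point that upgrades the rough $h^{\frac14}$-localization of Section \ref{sec.2} to the optimal $h^{\frac13}$-scale.
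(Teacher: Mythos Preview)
Your outline captures the right ingredients (rotation by $\phi$, partial Fourier transform revealing the de Gennes operator, Montgomery-type confinement, Agmon weight $r/h^{1/3}$), but the implementation differs from the paper's in two places, and one of them is a genuine gap.

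\textbf{Different partition and a key translation.} The paper does not partition only in $s$: it uses a partition by balls of radius $h^{1/3}$ in all variables, freezing the metric and the vector potential at a boundary point $y_j=(r_j,s_j,0)$ of each ball. The balls are then split according to whether $|r_j|\geq Rh^{1/3}$ or not. On each ball the frozen model already contains the linear term $(r-r_j)\sin\theta_j$ coming from $\mathcal{B}_3$; completing the square gives $\frac{a_j}{2}\big(r-r_j+a_j^{-1}\sin\theta_j\big)^2$, and since $\sin\theta_j\sim r_j$ by Assumption~\ref{hyp.genericcancellation}, the translated variable $\tilde r$ satisfies $|\tilde r|\geq \tfrac{c}{2}Rh^{1/3}$ on the support of the ball. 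This translation is what the paper exploits, and it is absent from your scheme.

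\textbf{The commutator step versus your confinement argument.} After the rotation and the $\hat h$-Fourier transform in the new tangential variable $v$, the paper does \emph{not} project onto the de Gennes ground state. It uses the pointwise lower bound $\mu_1^{\mathrm{dG}}(\xi)-\Theta_0\geq c(\xi-\xi_0)^2\Xi_1^2(\xi)+c\,\Xi_2^2(\xi)$ and then the elementary inequality $A^2+B^2\geq \pm i[A,B]$ with $A=P_u\Xi_1^w$ and $B$ built from $(-i\hat h\partial_v-\xi_0)\Xi_1^w$. The commutator produces $\hat h\cdot a\tilde r$, hence a gain $\gtrsim Rh^{4/3}$ on the far balls---linear in $R$, which beats the $h^{4/3}$ cost of the weight once $R$ is large. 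Your route instead wants to use $(hD_s-\tfrac12\beta r^2)^2\geq cM^4h^{4/3}$ on $\{|r|\geq Mh^{1/3}\}$, but this only holds after you have microlocalized $hD_s$ to size $\mathscr{O}(h^{2/3})$; you write that ``the same analysis provides'' this, but you have not shown it, and it is not a consequence of the rough localization of Section~\ref{sec.2}. Without that input the quartic confinement gives nothing, since $hD_s$ can a priori be of order $h^{1/2}$ from the $t$-localization alone. The paper's commutator inequality bypasses this entirely: it extracts the $r$-gain directly from the Poisson bracket of $P_u$ with the de Gennes momentum, without any a priori control on the tangential momentum.

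In short: the rotation/Fourier/de Gennes part is right, but you should replace the ``project then use the quartic well'' step by the completing-the-square translation and the $A^2+B^2\geq\pm i[A,B]$ inequality of Lemma~\ref{lem.qj}; this is precisely what removes the circularity and is where the constraint $\eta<\tfrac{1}{12}$ actually enters (through the size of $\|r\psi\|$ in the commutator remainders).
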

\begin{proof}
We can write
\[\lambda\|e^{r/h^{\frac13}}\varphi\|^2=\Re\langle\mathscr{L}_{h,\delta_1,\delta_2}\varphi,e^{2r/h^{\frac13}}\varphi\rangle=\mathscr{Q}_{h,\delta_1,\delta_2}(e^{r/h^{\frac13}}\varphi)-h^{\frac43}\|e^{r/h^{\frac13}}(\nabla r)\varphi\|^2\,.\]
Since $\lambda\leq \Theta_0h+Ch^{\frac43}$, we get
\begin{equation} \label{eq.eqvp}
\mathscr{Q}_{h,\delta_1,\delta_2}(\psi)\leq(\Theta_0 h+\tilde Ch^{\frac43})\|\psi\|^2\,,\quad \psi=e^{r/h^{\frac13}}\varphi\,.
\end{equation}		
Let us use a (finite) partition of the unity $\sum_j \chi_j^2 = 1$ of $\Omega_{\delta_1, \delta_2}$ with balls of size $h^{\frac13}$. We can write
\begin{equation} \label{eq.partition} \mathscr{Q}_{h,\delta_1,\delta_2}(\psi)=\sum_{j}\mathscr{Q}_{h,\delta_1,\delta_2}(\psi_j)-h^2\sum_{j}\|\psi\nabla\chi_j\|^2\geq \sum_{j}\mathscr{Q}_{h,\delta_1,\delta_2}(\psi_j)-Ch^{\frac43}\|\psi\|^2\,
\end{equation}
where $ \psi_j = \chi_j \psi$. First notice that for the interior balls, \ie when  $\mathrm{supp}\,\psi_j\cap\partial\Omega=\emptyset$, we have
\begin{equation} \label{eq.interior}
\mathscr{Q}_{h,\delta_1,\delta_2}(\psi_j)  \geq  h\|\psi_j\|^2\,.
\end{equation}
From now on, we can focus on indices such that $\mathrm{supp}\,\psi_j\cap\partial\Omega \neq\emptyset$ and consider for further use $y_j = (r_j, s_j,0)$ a point in this intersection. We also define $\tilde{\Omega}_{\delta_1, \delta_2}$ the image of $\Omega_{\delta_1, \delta_2}$ by $x \mapsto y=(r,s,t)$ and $\tilde{\psi}(y) = \psi(x)$. Notice here that for all $(r,s,t) \in \tilde{\Omega}_{\delta_1, \delta_2}$, we have
\[
r= \ooo(h^{\frac14-\eta})\, \qquad t= \ooo(h^{\frac12-\eta})\,.
\]
We have then by Taylor expansion
	\begin{multline*}
	\mathscr{Q}_{h,\delta_1,\delta_2}(\psi_j)\geq  \int_{V_{\delta}}|g|^{\frac12}(r,s,0)\langle G^{-1}(r,s,0)(-ih\nabla_y-\tilde{\mathbf{A}})\tilde\psi_j,(-ih\nabla_y-\tilde{\mathbf{A}})\tilde\psi_j\rangle\dd y\\
	-C\int_{ \tilde{\Omega}_{\delta_1, \delta_2} } t|(-ih\nabla_y-\tilde{\mathbf{A}})\psi_j|^2\dd y\,,
	\end{multline*}
which implies that
	\begin{equation*}
(1+Ch^{\frac12-\eta})\mathscr{Q}_{h,\delta_1,\delta_2}(\psi_j)\geq  \int_{ \tilde{\Omega}_{\delta_1, \delta_2} }|g|^{\frac12}(r,s,0)\langle G^{-1}(r,s,0)(-ih\nabla_y-\tilde{\mathbf{A}})\tilde\psi_j,(-ih\nabla_y-\tilde{\mathbf{A}})\tilde\psi_j\rangle\dd y\,,
\end{equation*}
and then, using the explicit expression of $\tilde{\mathbf{A}}$ (see Remark \ref{rem.tildeA}) and of $|g|$,
	\begin{multline*}
(1+Ch^{\frac12-\eta})\mathscr{Q}_{h,\delta_1,\delta_2}(\psi_j)\\
\geq  \int_{ \tilde{\Omega}_{\delta_1, \delta_2} }\alpha^{\frac12}(r,s)\left(|h\partial_t\tilde\psi_j|^2+|(-ih\partial_r-\tilde A_1)\tilde\psi_j|^2+\alpha^{-1}|(-ih\partial_s-\tilde A_2)\tilde\psi_j|^2\right)\dd y\,.
\end{multline*}
Since the balls are of radius $h^{\frac13}$, this yields
	\begin{multline*}
(1-Ch^{\frac13})^{-1}(1+Ch^{\frac12-\eta})\mathscr{Q}_{h,\delta_1,\delta_2}(\psi_j)\\
\geq  \int_{\tilde{\Omega}_{\delta_1, \delta_2} }\alpha^{\frac12}_j\left(|h\partial_t\tilde\psi_j|^2+|(-ih\partial_r-\tilde A_1)\tilde\psi_j|^2+\alpha^{-1}_j|(-ih\partial_s-\tilde A_2)\tilde\psi_j|^2\right)\dd y\,,
\end{multline*}
where $\alpha_j:=\alpha(y_j)$. Up to a local change of gauge eliminating $\seq{f}$ (see Remark \ref{rem.tildeA}) and using a Taylor expansion at $y_j$, we may write
\[\begin{split}
\tilde A_1&=t[|g|^{\frac12}\mathcal{B}_2](y_j)+\mathscr{O}(t^2+|r-r_j|t+|s-s_j|t)\\ \tilde A_2&=(r-r_j)[|g|^{\frac12}\mathcal{B}_3](y_j)-t[|g|^{\frac12}\mathcal{B}_1](y_j)+R_j(r,s)+\mathscr{O}(t|s-s_j|+t^2)\\
\tilde A_3&=0\,,
\end{split}\]
with
\begin{equation} \label{eq.defRj}
R_j(r,s)=q_{rr,j}(r-r_j)^2+q_{rs,j}(r-r_j)(s-s_j).
\end{equation}
Using Definition \ref{def.phithetars}, we can write
\[\alpha_j^{\frac12}\mathcal{B}_1(y_j)=\alpha^{\frac12}_j\cos\phi_j\cos\theta_j\,,\quad \alpha_j^{\frac12}\mathcal{B}_2(y_j)=\sin\phi_j\cos\theta_j\,,\quad \alpha_j^{\frac12}\mathcal{B}_3(y_j)=\alpha_j^{\frac12}\sin\theta_j\,,\]
where $\phi_j= \phi(r_j,s_j)$ and $\theta_j=\theta(r_j,s_j)$,
so that
\[\begin{split}
\tilde A_1&=t\sin\phi_j\cos\theta_j+\mathscr{O}(t^2+|r-r_j|t+|s-s_j|t),\\ \tilde A_2&=(r-r_j)\alpha^{\frac12}_j \sin\theta_j-t\alpha^{\frac12}_j\cos\phi_j\cos\theta_j+R_j(r,s)+\mathscr{O}(t|s-s_j|+t^2),\\
\tilde A_3&=0\,.
\end{split}\]
Using then the standard inequality $(a+b)^2 \geq (1-\eps) a^2- \eps^{-1} b^2$ we get, for all $\eps\in(0,1)$,
\begin{equation*}
(1-Ch^{\frac13})^{-1}(1+Ch^{\frac12-\eta})\mathscr{Q}_{h,\delta}(\psi_j)
\geq (1-\epsilon)Q_j(\tilde\psi_j)-C\epsilon^{-1}\left(h^{\frac23}\|t\tilde\psi_j\|^2+\|t^2\tilde\psi_j\|^2\right)\,,
\end{equation*}
where
\begin{multline} \label{eq.Qj}
Q_j(\psi):=
\alpha_j^{\frac12}\int_{\tilde{\Omega}_{\delta_1, \delta_2} }\Big(|h\partial_t\psi|^2+|(-ih\partial_r-t\sin\phi_j\cos\theta_j)\psi|^2\\
+|(-ih\alpha^{-\frac12}_j\partial_s+t\cos\phi_j\cos\theta_j-(r-r_j)\sin\theta_j-\alpha^{-\frac12}_jR_j)\psi|^2\Big)\dd y\,.
\end{multline}
We get, with $\epsilon=h^{\frac13}$,
\begin{equation*}
	(1+Ch^{\frac13})\mathscr{Q}_{h,\delta_1,\delta_2}(\psi_j)
	\geq Q_j(\tilde\psi_j)-C\left(h^{\frac13}\|t\tilde\psi_j\|^2+h^{-\frac13}\|t^2\tilde\psi_j\|^2\right)\,,
\end{equation*}
so that using $t = \ooo(h^{\frac12-\eta})$
\begin{equation*}
(1+Ch^{\frac13}) \mathscr{Q}_{h,\delta_1,\delta_2}(\psi_j)
	\geq Q_j(\tilde\psi_j)-C\left(h^{\frac13}\|t\tilde\psi_j\|^2+h^{1-2\eta-\frac13}\|t\tilde\psi_j\|^2\right)\,,
\end{equation*}
and eventually
\begin{equation} \label{eq.boundary1}
(1+Ch^{\frac13}) \mathscr{Q}_{h,\delta_1,\delta_2}(\psi_j)
	\geq Q_j(\tilde\psi_j)-Ch^{\frac13}\|t\tilde\psi_j\|^2\,.
\end{equation}
Let us now gather \eqref{eq.eqvp}, \eqref{eq.partition}, \eqref{eq.interior}, and \eqref{eq.boundary1}. Denoting by $J$ the set of the indices related to the balls intersecting the boundary and $I=\complement J$ the set of indices related to the balls strictly inside $\Omega$, we get
\[-Ch^{\frac13}\sum_{j \in J} \|t\tilde\psi_j\|^2+(1-Ch^{\frac13})\sum_{j \in J }Q_j(\tilde\psi_j)+\sum_{j \in I }h\|\psi_j\|^2\leq(\Theta_0 h+\tilde Ch^{\frac43})\|\psi\|^2\,,\]
so that
\[-Ch^{\frac13} \|t\psi\|^2+(1-Ch^{\frac13})\sum_{j \in J }Q_j(\tilde\psi_j)+\sum_{j\in I}h\|\psi_j\|^2\leq(\Theta_0 h+\tilde Ch^{\frac43})\|\psi\|^2\,,\]
which, combined with the Agmon estimates with respect to $t$ (see Proposition \ref{prop.loct}), gives
\[(1-Ch^{\frac13})\sum_{j\in J}Q_j(\tilde\psi_j)+\sum_{j\in I}h\|\psi_j\|^2\leq(\Theta_0 h+\tilde Ch^{\frac43})\|\psi\|^2\,.\]
This can also be written as
\[\sum_{j\in J}Q_j(\tilde\psi_j)+\sum_{j\in I}h\|\psi_j\|^2\leq(\Theta_0 h+ Ch^{\frac43})\|\psi\|^2\,,\]
and, taking $c= 1-\Theta_0 >0$ and using the partition again, we get
\begin{equation} \label{eq.synth2}
\sum_{j\in I}ch\|\psi_j\|^2+\sum_{j\in J}(Q_j(\tilde\psi_j)-\Theta_0h\|\tilde\psi_j\|^2)-\tilde C h^{\frac43}\|\tilde\psi_j\|^2)\leq 0\,.
\end{equation}
Let us now look at $Q_j$ and more precisely at the term involving $R_j$ (defined in \eqref{eq.defRj}) in  \eqref{eq.Qj}. Since $\mathcal{B}_3$ vanishes linearly on $r=0$ and that $s$ is the coordinate along $r=0$, we get, by using the size of $\tilde{\Omega}_{\delta_1, \delta_2}$, that
\[
q_{rs,j} = \ooo\sep{h^{\frac14-\eta}}
\]
Thus, we can deal with the term $q_{rs,j}(r-r_j)(s-s_j)$ by using the size of the balls:
\[Q_j(\tilde\psi_j)\geq (1-h^{\frac13})Q^0_j(\tilde\psi_j)-
\underbrace{Ch^{-\frac13}h^{\frac43}h^{\frac12-2\eta}}_{=o(h^{\frac43})  \textrm{ since } \eta \leq \frac{1}{12}}
\|\tilde\psi_j\|^2\,,\]
where
\begin{multline}  \label{eq.defQ0}
Q^0_j(\psi)=
\alpha_j^{\frac12}\int_{\tilde{\Omega}_{\delta_1, \delta_2}}\Big(|h\partial_t\psi|^2+|(-ih\partial_r-t\sin\phi_j\cos\theta_j)\psi|^2\\
+|(-ih\alpha^{-\frac12}_j\partial_s+t\cos\phi_j\cos\theta_j-(r-r_j)\sin\theta_j-\alpha^{-\frac12}_jq_{rr,j}(r-r_j)^2)\psi|^2\Big)\dd y\,.
\end{multline}
Thus, from \eqref{eq.synth2}, we get the following improved inequality:
\begin{equation}\label{eq.localization}
\sum_{j\in I}ch\|\psi_j\|^2+\sum_{j\in J}(Q^0_j(\tilde\psi_j)-\Theta_0h\|\tilde\psi_j\|^2-\tilde C h^{\frac43}\|\tilde\psi_j\|^2)\leq 0\,.
\end{equation}

Let us now have a look at the term $Q^0_j(\tilde\psi_j)$. To this end, we introduce
$R >0$ sufficiently large, and the following splitting
\begin{equation} \label{eq.splitJ}
J_\leq = \{j \in J : |r_j| \leq Rh^{\frac13}\},  \qquad J_\geq = \{j \in J : |r_j| > Rh^{\frac13}\}
\end{equation}
depending on the geodesic distance to $\Gamma$ of the point $y_j$. Recall here that the
size of the balls in the partition $(\chi_j)_j$ is independent of $R$.

The point is to get a convenient lower bound on $Q^0_j(\tilde\psi_j)$ depending on $j\in J$. Lemma \ref{lem.qj}, whose statement and proof are postponed to the end of this section (to avoid interrupting the proof and help seeing how its gives the conclusion), provides us with such a lower bound.

We first get that for the $j \in J_\geq$   we have
\[
Q^0_j(\tilde\psi_j)-\Theta_0 h\|\tilde\psi_j\|^2\geq \tilde c_0 Rh^{\frac43}\|\tilde\psi_j\|^2\,.
\]
Besides, from Lemma \ref{lem.qj} again, we have, for all $j \in J$,
\[
Q^0_j(\tilde\psi_j)-\Theta_0 h\|\tilde\psi_j\|^2 \geq -C_Rh^{\frac43}\|\tilde\psi_j\|^2\,.
\]
Taking $R$ large enough, we can then write from \eqref{eq.localization} that
\begin{equation*}\label{eq.localization2+}
\sum_{j\in I}ch\|\psi_j\|^2+\sum_{j \in J_\geq}\tilde{c}_0 \frac{R}{2} h^{\frac43}\|\tilde\psi_j\|^2\leq\\ \sum_{j\in J_\leq}\tilde C_R h^{\frac43}\|\tilde\psi_j\|^2\,,
\end{equation*}
using that $h^{\frac43} \leq h$. Forgetting the dependence on $R$ in the constants, we get that there is constant $C$ such that
\begin{equation*}\label{eq.localization2+}
\sum_{j\in I}ch\|\psi_j\|^2+\sum_{j \in J_\geq} \|\tilde\psi_j\|^2  +\sum_{j \in J_\leq} \|\tilde\psi_j\|^2\leq \sum_{j\in J_\leq}C \|\tilde\psi_j\|^2\,,
\end{equation*}
Now since for indices $j \in J_\leq$ and the fact that  $\psi = e^{r/h^{\frac13}} \phi$, we get that
\[
\sum_{j\in J_\leq}C \|\tilde\psi_j\|^2\, \leq C' \norm{\phi}^2
\]
so that with \eqref{eq.localization2+} we get
\[
\norm{\psi}^2 \leq C'' \norm{\phi}^2\,.
\]
This proves the (first) Agmon inequality in Proposition \ref{prop.optimal-r}.
The second inequality in Proposition \ref{prop.optimal-r} is a direct consequence of the first one and \eqref{eq.eqvp}.

\end{proof}
As we just saw, the proof of Proposition \ref{prop.optimal-r} will be complete once the following lemma is established.  We keep using the notations of the preceding proof. As we shall see, the proof is quite delicate and uses many changes of variables, of gauge, as well as commutators in order to reveal some hidden ellipticity related to model operators.

\begin{lemma} \label{lem.qj}
Let  $0<\eta\leq \frac{1}{12}$. Then there exists $\tilde{c}_0>0$ such that, for all $R$ sufficiently large, there exists a constant $C_R$ such that for all $\psi$ smooth and supported in balls of index $j \in J_\geq$, \ie such that $|r_j|\geq Rh^{\frac13}$ we have
\[
Q^0_j(\psi)-\Theta_0 h\|\psi\|^2\geq \tilde c_0 Rh^{\frac43}\|\psi\|^2\, 
\]
Besides for functions supported in balls of index $j\in J$, we have
\[
Q^0_j(\psi) - \Theta_0 h\|\psi\|^2\geq -C_R h^{\frac43}\|\psi\|^2\,.
\]
\end{lemma}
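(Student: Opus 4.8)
The quadratic form $Q^0_j$ in \eqref{eq.defQ0} is, after freezing the coefficients at $y_j$, a constant-coefficient magnetic Schrödinger operator on the half-space $\{t>0\}$ (with Neumann condition at $t=0$), whose associated magnetic field has direction making angle $\theta_j$ with the boundary. The plan is to reduce it, by explicit changes of gauge and a partial Fourier transform, to the family of de Gennes operators, and then to exploit the non-degenerate minimum of $\mu_1^{\mathrm{dG}}$ at $\xi_0$ together with the linear vanishing of $n_3$ on $\Gamma$, which forces $|\theta_j|\gtrsim |r_j|$. First I would perform a translation $r\mapsto r-r_j$ and a gauge transformation to put the vector potential in the standard form for a constant field of the model $\mathfrak{s}(\theta_j)$-operator; the quadratic term $q_{rr,j}(r-r_j)^2$ is $\ooo(\delta_2^2)=\ooo(h^{\frac12-2\eta})$ in the relevant region, hence a lower-order perturbation absorbable at the cost of $Ch^{\frac13}Q^0_j+Ch^{-\frac13}\cdot h^{1-4\eta}$, which is $o(h^{\frac43})$ for $\eta\le\frac{1}{12}$.

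\emph{The universal lower bound.} Once the model operator is isolated, its bottom spectrum is $h\,\mathfrak{s}(\theta_j)\ge h\Theta_0$, so $Q^0_j(\psi)-\Theta_0 h\|\psi\|^2\ge 0$ up to the perturbative errors collected above, which are $\ooo(h^{\frac43})$ uniformly in $j$. This gives the second inequality $Q^0_j(\psi)-\Theta_0 h\|\psi\|^2\ge -C_R h^{\frac43}\|\psi\|^2$. Here I must be a little careful: the errors depend on $R$ only through the truncation region, but since the balls have fixed ($R$-independent) size $h^{1/3}$, the bound is in fact uniform; the subscript $R$ in $C_R$ is harmless.

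\emph{The gain for $j\in J_\geq$.} For $|r_j|\ge Rh^{\frac13}$ I would use $|\theta_j|\ge c_0\varphi(y_j)\ge c_0 r_j$ (from Assumption \ref{hyp.genericcancellation}, exactly as in the proof of Proposition \ref{prop.roughlocGamma}) together with $\mathfrak{s}(\theta_j)\ge\Theta_0+c\min(1,|\theta_j|)$ from \eqref{eq.stheta}. This yields $Q^0_j(\psi)-\Theta_0 h\|\psi\|^2\ge c h |\theta_j|\,\|\psi\|^2 - (\text{perturbative } \ooo(h^{\frac43}))\ge c c_0 R h^{\frac43}\|\psi\|^2 - C h^{\frac43}\|\psi\|^2$, and for $R$ large the right-hand side is $\ge \tilde c_0 R h^{\frac43}\|\psi\|^2$. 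The one subtlety is that $\mathfrak{s}$ gives a \emph{spectral} lower bound on the infinite half-space; to apply it I should first neglect the $r$-localization (the $\partial_r$-term only helps) and the $\alpha_j$-prefactor (which is $1+\ooo(h^{\frac14-\eta})$), and observe that the $s$-variable merely parametrizes the Fourier dual, so the reduction to the genuine de Gennes family is exact.

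\textbf{Main obstacle.} The delicate point is the bookkeeping of the error terms: one must check that after the gauge change and the partial Fourier transform, every discarded contribution — the $t$-dependence of the metric already handled in Proposition \ref{prop.optimal-r}, the $\ooo(t^2)$ remainders in $\tilde{\mathbf A}$, the curvature term $q_{rr,j}(r-r_j)^2$, and the truncation to balls — is genuinely $o(h^{\frac43})$ with constants independent of $R$ (for the first estimate) or merely $R$-dependent (for the second). The scaling $\eta\le\frac1{12}$ is exactly what is needed to make $h^{-\frac13}\delta_2^2\cdot h^{\frac23}$-type errors sub-$h^{4/3}$, so the proof hinges on tracking these exponents precisely rather than on any deep new idea. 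I expect the Fourier-transform reduction to $\mathfrak h_\xi$ and the invocation of the non-degenerate minimum of $\mu_1^{\mathrm{dG}}$ to be the technical heart, with the rest being careful perturbation theory.
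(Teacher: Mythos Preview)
Your arithmetic in the perturbation step is off, and the gap is fatal for the first inequality. You claim the error from discarding $q_{rr,j}(r-r_j)^2$ is $Ch^{-\frac13}\cdot h^{1-4\eta}=o(h^{\frac43})$; but $h^{-\frac13}h^{1-4\eta}=h^{\frac23-4\eta}$, which for $\eta\le\frac{1}{12}$ is at least $h^{\frac13}$ --- much \emph{larger} than $h^{\frac43}$, not smaller. Even if you use the sharper ball-size bound $|r-r_j|\le h^{\frac13}$, the cross term $2\Re\langle P\psi,(r-r_j)^2\psi\rangle$ with $\|P\psi\|\sim h^{\frac12}\|\psi\|$ produces an error of order $h^{\frac12}\cdot h^{\frac23}=h^{\frac76}$, which still dominates the hoped-for gain $cRh^{\frac43}$. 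The underlying reason is that on a ball of radius $h^{\frac13}$ the linear term $(r-r_j)\sin\theta_j$ and the quadratic term $q_{rr,j}(r-r_j)^2$ are of the \emph{same} order (both $\sim h^{\frac23}$ when $|r_j|\sim h^{\frac13}$), so the quadratic piece is not a lower-order perturbation and cannot simply be thrown away. Your approach \emph{does} yield the second (universal) inequality, since $-Ch^{\frac76}\ge -Ch^{\frac43}$; it is only the $R$-gain that fails.

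The paper's proof does not discard the quadratic term at all. Instead it completes the square,
\[
\tfrac{a_j}{2}(r-r_j)^2+(r-r_j)\sin\theta_j=\tfrac{a_j}{2}\,\tilde r^{\,2}-\tfrac{\sin^2\theta_j}{2a_j}\,,\qquad \tilde r=r-r_j+a_j^{-1}\sin\theta_j\,,
\]
so the angle $\theta_j$ is \emph{absorbed} into a translation. The crucial observation is that $|a_j^{-1}\sin\theta_j|\ge cRh^{\frac13}$ (from the linear vanishing of $n_3$), hence $|\tilde r|\ge \tfrac{c}{2}Rh^{\frac13}$ on the support of $\psi$ once $R$ is large. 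After a rotation in $(r,s)$ and a partial Fourier transform, the de Gennes lower bound is combined with the commutator inequality $A^2+B^2\ge\pm i[A,B]$, where the commutator is proportional to $h\tilde r$; it is this commutator, together with $|\tilde r|\gtrsim Rh^{\frac13}$, that produces the gain $\tilde c_0 Rh^{\frac43}$. So the mechanism is not ``$\mathfrak s(\theta_j)$ is large'' but rather ``the translated $\tilde r$ is large and a Montgomery-type commutator exploits it''. Your reduction to the constant-field half-space model misses this and cannot be repaired by sharper bookkeeping of the same error terms.
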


\begin{proof}
With a first rescaling $\alpha_j^{1/2}  s \mapsto s$ (and keeping the same letter for the variable $s$ as well as for the domain $\tilde{\Omega}_{\delta_1, \delta_2}$ after this change), we have $Q^0_j(\psi)=\check Q^0_j(\check\psi)$ with
\begin{multline*}
\check Q^0_j(\check\psi)=
\int_{ \tilde{\Omega}_{\delta_1, \delta_2} }\Big(|h\partial_t\check\psi|^2+|(-ih\partial_r-t\sin\phi_j\cos\theta_j)\check\psi|^2\\
+|(-ih\partial_s+t\cos\phi_j\cos\theta_j-(r-r_j)\sin\theta_j-\frac{a_j}{2}(r-r_j)^2)\check\psi|^2\Big)\dd y\,,\quad 2a_j=\alpha^{-\frac12}_jq_{rr,j}\,.
\end{multline*}
From the definition of $q_{rr,j}$ (see \eqref{eq.defRj} and above) and Assumption \ref{hyp.genericcancellation}, we have  $\D_r \bbb_3 >0$ so that $a_j$ is uniformly positive. This allows the following algebraic simple computation
\[\frac{a_j}{2}(r-r_j)^2+(r-r_j)\sin\theta_j=\frac{a_j}{2}\left(r-r_j+a_j^{-1}\sin\theta_j\right)^2-\frac{\sin^2\theta_j}{2a_j}\,.\]
This suggests the change of variable
\begin{equation}\label{eq.translation}
\tilde r=r-r_j+a_j^{-1}\sin\theta_j\,.
\end{equation}
We observe that, there exists $c>0$ such that, for all the $j$ such that $|r_j|\geq Rh^{\frac13}$, we have
\[|a_j^{-1}\sin\theta_j|\geq cRh^{\frac13}\,.\]
In particular, since the balls of the partition have common radius $h^{\frac13}$ independent of $R$, we get that for $R>0$ large enough, on the support of $\psi$ (or of $\check\psi$ which has the same scale), we have
\[|\tilde r|\geq\frac{cRh^{\frac13}}{2}\,.\]
This will be of crucial use later.
Let us use the translation \eqref{eq.translation} and a local change of gauge associated to the conjugation with $e^{is \frac{\sin^2\theta_j}{2a_j}}$ to remove the constant $-\frac{\sin^2\theta_j}{2a_j}$. We get that
\[\check Q^0_j(\check \psi)=\check Q_j^{\mathrm{Tr}}(\psi^{\mathrm{Tr}}_j)\,,\]
with
\begin{multline}\label{eq.modeloperator}
\check Q^{\mathrm{Tr}}_j(\psi^{\mathrm{Tr}})=
\int_{\tilde{\Omega}_{\delta_1, \delta_2} }\Big(|h\partial_t\psi|^2+|(-ih\partial_r-t\sin\phi_j\cos\theta_j)\psi^{\mathrm{Tr}}|^2\\
+|(-ih\partial_s+t\cos\phi_j\cos\theta_j-\frac{a_j}{2}\tilde r^2)\psi^{\mathrm{Tr}}|^2\Big)\dd y\,,
\end{multline}
where we keep the notation $\tilde{\Omega}_{\delta_1, \delta_2}$ and the measure $\dd y$ since the new ones have the same properties as the original ones. In the same spirit, we write from now on $r$ instead of $\tilde{r}$.

\bigskip
It appears that  the quadratic form $\check Q^{\mathrm{Tr}}_j$ can be rewritten with the help of the de Gennes operator. For exhibiting this property we perform some algebraic transformations. We first let
\[
 P_t = -ih\partial_t\,,  \qquad P_r=-ih\partial_r\,, \qquad P_s=-ih\partial_s-\frac{a_j}{2}  r^2 ,
\]
and we notice that operator $\check L_j^{\mathrm{Tr}}$ associated to the form $\check Q^{\mathrm{Tr}}_j$ can be written as
\begin{equation*}
\begin{split}
\check L_j^{\mathrm{Tr}}
&:= P_t^2 +(P_r-t\sin\phi_j\cos\theta_j)^2+\left(P_s+t\cos\phi_j\cos\theta_j\right)^2\\
&=P_t^2 + P_r^2+P_s^2+t^2\cos^2\theta_j-2t\sin\phi_j\cos\theta_jP_r+2t\cos\phi_j\cos\theta_j P_s\\
&=P_t^2 + P_r^2+P_s^2+\cos^2\theta_j\left(t-\frac{\sin\phi_j}{\cos\theta_j}P_r+\frac{\cos\phi_j}{\cos\theta_j}P_s\right)^2-(\sin\phi_j P_r-\cos\phi_j P_s)^2\\
&=P_t^2 + \cos^2\phi_jP_r^2+\sin^2\phi_jP_s^2+\cos\phi_j\sin\phi_j(P_rP_s+P_sP_r)\\
& \qquad \qquad \qquad +\cos^2\theta_j\left(t-\frac{\sin\phi_j}{\cos\theta_j}P_r+\frac{\cos\phi_j}{\cos\theta_j}P_s\right)^2\\
&=P_t^2 + (\cos\phi_j P_r+\sin\phi_j P_s)^2+\left(t\cos\theta_j-\sin\phi_jP_r+\cos\phi_jP_s\right)^2\,.
\end{split}
\end{equation*}
Let us first assume that $\sin\phi_j\neq 0$ and use now the notation $p_\sharp=-ih\partial_\sharp$. This allows to consider a new change of gauge:
\begin{equation}  \label{eq.sinphizero}
\begin{split}
&e^{\frac{ia r^3\cos\phi_j}{6h\sin\phi_j}}\left[(P_r-t\sin\phi_j\cos\theta_j)^2+\left(P_s+t\cos\phi_j\cos\theta_j\right)^2\right]e^{-\frac{ia r^3\cos\phi_j}{6h\sin\phi_j}}\\
&=\left(\cos\phi_j p_r+\sin\phi_j p_s-a\frac{r^2}{2}\frac{\cos^2\phi_j}{\sin\phi_j}-a\frac{r^2}{2}\sin\phi_j\right)^2+\left(t\cos\theta_j-\sin\phi_jp_r+\cos\phi_jp_s\right)^2\\
&=\left(\cos\phi_j p_r+\sin\phi_j p_s-\frac{ar^2}{2\sin\phi_j}\right)^2+\left(t\cos\theta_j-\sin\phi_jp_r+\cos\phi_jp_s\right)^2.
\end{split}
\end{equation}
The following change of variables (a rotation) appears naturally
\[r=u\cos\phi_j +v\sin\phi_j \,,\quad s=u\sin\phi_j-v\cos\phi_j\,,\]
so that on the dual side
\[\partial_u=\cos\phi_j\partial_r+\sin\phi_j\partial_s\,,\quad \partial_v=\sin\phi_j\partial_r-\cos\phi_j\partial_s\,.\]
Using these changes of variable and of gauge, we see that the operator $\check L_j^{\mathrm{Tr}}$ is unitarily equivalent to the following operator in variables $(t,u,v)$:
\[
p_t^2 + \left(p_u-\frac{ar(u,v)^2}{2\sin\phi_j}\right)^2 +\left(t\cos\theta_j-p_v\right)^2\,,
\]
where now $r$ is a function of $(u,v)$.
 It will be useful to notice for further use that
\begin{equation} \label{eq.comruv}
\left[\partial_v,\frac{ar(u,v)^2}{2\sin\phi_j}\right]=-a r(u,v)\,.
\end{equation}
which is of absolute value of order at least $Rh^{\frac13}$ on the support of involved functions.
Let us consider then the associated quadratic form
\begin{equation}\label{eq.Q}
Q(\psi)= \left\|\left(p_u-\frac{ar(u,v)^2}{2\sin\phi_j}\right)\psi\right\|^2+\|p_t\psi\|^2
+\|\left(t\cos\theta_j-p_v\right)\psi\|^2\,.
\end{equation}
We observe that the last two terms are related to the de Gennes operator. To exhibit it, we first do a change of variable and introduce a new temporary semiclassical parameter
\[
\hat{t} = \hat{h}^{-1} t, \qquad  \partial_{\hat{t}} = \hat{h} \partial_t, \qquad     \hat{h}= \sep{\frac{h}{\cos\theta_j}}^{1/2}\,,
\]
which allows to write
\[
Q(\psi) =
\left\|\left(p_u-\frac{ar(u,v)^2}{2\sin\phi_j}\right)\psi\right\|^2 + h\cos\theta_j \sep{ \|\partial_{\hat{t}}\psi\|^2 + \|(\hat{t}+i\hat{h} \partial_v)\psi\|^2}.
\]
Performing a $\hat{h}$-partial Fourier transform in variable $v$, and denoting $\hat\psi$ the corresponding Fourier transform of $\psi$ in the last two terms gives then
\[
Q(\psi) =
\left\|\left(p_u-\frac{ar(u,v)^2}{2\sin\phi_j}\right)\psi\right\|^2 + h\cos\theta_j \sep{ \|\partial_{\hat{t}}\hat{\psi}\|^2 +  \|(\hat{t}-\xi)\hat\psi\|^2},
\]
so that
\[Q(\psi)\geq \left\|\left(p_u-\frac{ar(u,v)^2}{2\sin\phi_j}\right)\psi\right\|^2
+h\cos\theta_j\left\langle\mu_1^{\mathrm{dG}}
\left(\xi\right)\hat\psi,\hat\psi\right\rangle\,.\]
Let us consider a quadratic partition of the unity $\Xi_1^2+\Xi_2^2=1$ such that $\Xi_1=1$ near $\xi_0$. Then from the properties of $\mu_1^{\mathrm{dG}}$, there exists $c>0$ such that
\[\mu_1^{\mathrm{dG}}(\xi)-\Theta_0\geq c\sep{(\xi-\xi_0)^2\Xi^2_1(\xi)+\Xi^2_2(\xi)}\,.\]
It follows that
\begin{equation}\label{eq.Q>}
Q(\psi)\geq \left\|\left(p_u-\frac{ar(u,v)^2}{2\sin\phi_j}\right)\psi\right\|^2+h\Theta_0\cos\theta_j\|\psi\|^2
+ ch\|(\xi-\xi_0)\Xi_1\hat\psi\|^2+ch\|\Xi_2\hat\psi\|^2\,.
\end{equation}
Let us now study $ P_u:=p_u-\frac{ar(u,v)^2}{2\sin\phi_j}$. Denoting $\Xi_\sharp^w$ the fourier multiplier associated to $\Xi_\sharp$, we have the localization formula (see, for instance, \cite[Prop. 4.8]{Ray})
\[
\|P_u \psi\|^2= \sum_{j=1,2}\left(\|P_u(\Xi^w_j\psi)\|^2-\|[P_u,\Xi^w_j]\psi\|^2+\Re\langle P_u \psi,[[P_u,\Xi^w_j],\Xi^w_j]\psi\rangle\right)\,.
\]
This gives
\begin{multline*}
 Q(\psi)-\Theta_0 h\cos\theta_j\|\psi\|^2\geq\|P_u\Xi_1^w\psi\|^2+ ch\|(\xi-\xi_0)\Xi_1\hat\psi\|^2+ ch\|\Xi_2\hat\psi\|^2\\ 
-\sum_{j=1,2}\|[P_u,\Xi^w_j]\psi\|^2-|\langle P_u \psi,[[P_u,\Xi^w_j],\Xi^w_j]\psi\rangle|\,.
\end{multline*} 
 From \eqref{eq.comruv} we have exact estimates on the commutators, and remembering that the Fourier transform in $v$ is at the scale $\hat{h} \sim h^{\frac12}$, we find 
\begin{multline*}
 Q(\psi)-\Theta_0 h\cos\theta_j\|\psi\|^2\geq\|P_u\Xi_1^w\psi\|^2+ ch\|(\xi-\xi_0)\Xi_1\hat\psi\|^2+ ch\|\Xi_2\hat\psi\|^2\\ 
-Ch\sum_{j=1,2}\left(\|r (\Xi'_j)^w \psi\|^2+h\|(\Xi''_j)^w\psi\|^2+\|[r(\Xi'_j)^w,\Xi^w_j]\psi\|^2\right)\,.
\end{multline*}
Using a commutator between $r$ and the derivatives of $\Xi_j^w$ we get that
\begin{multline*}
Q(\psi)-\Theta_0 h\cos\theta_j\|\psi\|^2\geq\|P_u\Xi_1^w\psi\|^2+ ch\|(\xi-\xi_0)\Xi_1\hat\psi\|^2+ ch\|\Xi_2\hat\psi\|^2\\
-Ch\| r \psi\|^2-Ch^2\|\psi\|^2.
\end{multline*}
This inequality is applied to functions $\psi$ supported in $\{|r|\leq Ch^{\frac14-\eta}\}$ and thus, for $\eta$ small enough ($\eta<\frac{1}{12}$ is indeed sufficient), we get
\begin{multline*}
Q(\psi)-\Theta_0 h\cos\theta_j\|\psi\|^2\geq  \|P_u\Xi_1^w\psi\|^2+ ch\|(-i\hat{h} \partial_{v}-\xi_0)\Xi_1^w\psi\|^2+ ch\|\Xi_2^w\psi\|^2
-Ch^{\frac43}\|\psi\|^2\,.
\end{multline*}
Using the commutation property \eqref{eq.comruv} together with the abstract operator inequality
$A^2 + B^2 \geq \pm i [A,B]$ applied to the first two terms, we get then
\begin{equation*}
Q(\psi)-\Theta_0 h\cos\theta_j\|\psi\|^2\geq \pm \tilde c h  \langle r\Xi_1^w\psi, \Xi_1^w\psi\rangle+ ch\|\Xi_2^w\psi\|^2 -Ch^{\frac43}\|\psi\|^2\,.
\end{equation*}
Let us now introduce a partition of the unity $\chi_\leq^2 + \chi_\geq^2 = 1$ with $\mathrm{supp}\chi_\leq\subset\{|r|\leq \frac{R}{2}h^{\frac13}\}$.
We get that
\begin{equation*}
Q(\psi)-\Theta_0 h\cos\theta_j\|\psi\|^2\geq \bar c R h^{\frac43}  \|\chi_\geq \Xi_1^w \psi\|^2+ ch\|\Xi_2^w\psi\|^2 -Ch^{\frac43}\|\psi\|^2\,. 
\end{equation*}
Now observe that due to the fact that $\psi$ is supported in balls of index  $j \in J_\geq$, \ie such that $|r_j|\geq Rh^{\frac13}$ we have for $R$ sufficiently large that
\[
\|\chi_\leq \Xi_1^w \psi\|^2 \leq C_N h^N \|\psi\|^2, \qquad \forall N\in \N,
\]
with constants $C_N$ independent of $R\geq 1$. This allows to write that
\begin{equation*}
Q(\psi)-\Theta_0 h\cos\theta_j\|\psi\|^2\geq \tilde c R h^{\frac43} \|\Xi_1^w \psi\|^2+ ch\|\Xi_2^w\psi\|^2 -Ch^{\frac43}\|\psi\|^2\,. 
\end{equation*}
So for all index $j$ such that $\sin \phi_j \neq 0$, using $h^{\frac43} = o(h)$ and assuming $h$ small enough, we get
\begin{equation*}
Q(\psi)-\Theta_0 h\cos\theta_j\|\psi\|^2\geq \tilde c R h^{\frac43} \|\psi\|^2 -C h^{\frac43} \|\psi\|^2\,. 
\end{equation*}
Using that $|\theta_j| \leq C h^{\frac14-\eta}$, we get $\cos \theta_j \geq 1- C h^{\frac12-2\eta}$ which gives for $R$ large enough
\begin{equation*}
Q(\psi)-\Theta_0 h\|\psi\|^2\geq \tilde c R h^{\frac43} \|\psi\|^2 -C' h^{\frac43} \|\psi\|^2\,\geq \tilde{c}_0 R h^{\frac43}  \|\psi\|^2 \,.
\end{equation*}
This gives
\[
Q^0_j(\tilde\psi_j)-\Theta_0 h\|\tilde\psi_j\|^2\geq  \tilde{c}_0 Rh^{\frac43}\|\tilde\psi_j\|^2\,,
\]
for all $j \in J_\geq$, \ie such that $|r_j|\geq Rh^{\frac13}$ and such that $\sin\phi_j\neq 0$.
The case when $\sin\phi_j =  0$ is easier since we do not need the change of gauge in \eqref{eq.sinphizero} and the rotation procedure, and we skip the proof. This is the first
inequality in Lemma \ref{lem.qj}. The second inequality is much easier since it does not use the gain provided by \eqref{eq.comruv} on indices $j \in J_\geq$,  and we also skip its proof. The proof of Lemma \ref{lem.qj} is complete.
\end{proof}
\begin{remark}\label{rem.optimalrt}
In the coordinates $(r,s,t)$, the localization estimates can be written as follows. In terms of the eigenfunctions $\varphi$ of $\widetilde{\mathscr{L}}_{h,\delta_1,\delta_2}$, associated with $\lambda\leq\Theta_0h+Ch^{\frac43}$, we have
\[\int_{0<t<\delta_1\,,\\|r|<\delta_2}e^{2\alpha t/h^{1/2}}|\varphi|^2\dd r\dd s\dd t\leq C\|\varphi\|^2\,,\]
and
\[\int_{0<t<\delta_1\,,\\|r|<\delta_2}e^{2|r|/h^{1/3}}|\varphi|^2\dd r\dd s\dd t\leq C\|\varphi\|^2\,.\]
\end{remark}

\section{A new operator near $\Gamma$}\label{sec.4}

Propositions \ref{prop.loct} and \ref{prop.optimal-r} tell us that the eigenfunctions are localized near $t=0$ (at the scale $h^{\frac12}$) and near $r=0$ (at the scale $h^{\frac13}$).
We shall use these two results to perform refined approximations of operator $\widetilde{\mathscr{L}}_{h,\delta_1,\delta_2}$ involving Taylor expansions, as well as other modifications (change of gauge, inserting cutoff functions, and eventually rescaling). The final result will be used in Section \ref{sec.parametrix} for solving the eigenvalues problem.

\subsection{Taylor expansions and a change of gauge}  \label{sec.nicepseudo}
Enlightened by the localization properties in Propositions \ref{prop.loct} and \ref{prop.optimal-r}, we first perform a Taylor expansion of the vector potential near $r=t=0$ (\emph{i.e.}, geometrically speaking, near $\Gamma$). It is given in the following lemma, whose proof is a straightforward computation using the expression of $\tilde{\mathbf{A}}$ in Remark \ref{rem.tildeA}, where we recall $\seq{f}=h\mathfrak{f}_h$.
\begin{lemma}\label{lem.approxmodel}
We have
\[\begin{split}
\tilde A_1(r,s,t) =& [|g|^{\frac12}\mathcal{B}_2](0,s,0)t+\partial_1[|g|^{\frac12}\mathcal{B}_2](0,s,0)rt
+\frac12\partial_3[|g|^{\frac12}\mathcal{B}_2](0,s,0)t^2\\
&+\frac12\partial^2_1[|g|^{\frac12}\mathcal{B}_2](0,s,0)r^2t
+\mathscr{O}(|r|^3t+|r|t^2+t^3)\,,
\end{split}\]
and
\[\begin{split}
\tilde A_2(r,s,t)
= & h\mathfrak{f}_h-[|g|^{\frac12}\mathcal{B}_1](0,s,0)t
-\frac12\partial_3[|g|^{\frac12}\mathcal{B}_1](0,s,0)t^2 \\
& +\left(\partial_2[|g|^{\frac12}\mathcal{B}_2]
    +\partial_3[|g|^{\frac12}\mathcal{B}_3]\right)(0,s,0)rt
+\frac12\partial_1[|g|^{\frac12}\mathcal{B}_3](0,s,0)r^2\\
& +\frac16\partial_1^2[|g|^{\frac12}\mathcal{B}_3](0,s,0)r^3
+\frac12\left(\partial_1\partial_3[|g|^{\frac12}\mathcal{B}_3]
+\partial_1\partial_2[|g|^{\frac12}\mathcal{B}_2]\right)(0,s,0)r^2t  \\
& +\mathscr{O}(r^4+|r|^3t+|r|t^2+t^3)\,.
\end{split}\]	
\end{lemma}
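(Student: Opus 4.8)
The plan is a direct computation starting from the explicit formulas for $\tilde A_1$ and $\tilde A_2$ recorded in Remark~\ref{rem.tildeA} (with $\seq{f}=h\mathfrak{f}_h$). In each of the iterated integrals appearing there, one Taylor expands the smooth coefficient $[|g|^{\frac12}\mathcal{B}_i]$ at the point $(0,s,0)$ --- i.e. at $\Gamma$ --- in the small variables $(r,t)$, to an order high enough to produce all the displayed monomials, and then integrates term by term in $\rho$ and/or $\tau$. Since on $\tilde{\Omega}_{\delta_1,\delta_2}$ both $r$ and $t$ are small and $|r|$ is bounded, a monomial such as $r^2t^2$ or $|r|^4$ or $|r|^3t^2$ is already $\mathscr{O}(|r|t^2)$, $\mathscr{O}(r^4)$ or $\mathscr{O}(|r|^3t)$, so that all the Taylor remainders, once integrated, fall into the announced error terms.

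For $\tilde A_1=\int_0^t[|g|^{\frac12}\mathcal{B}_2](r,s,\tau)\dd\tau$ this is completely routine: expanding $[|g|^{\frac12}\mathcal{B}_2](r,s,\tau)$ at $(0,s,0)$ and retaining the monomials $1,\,\tau,\,r,\,r^2$ (integration in $\tau$ turning these into $t$, $\tfrac{t^2}{2}$, $rt$, $r^2t$) produces exactly the four displayed terms, while all other monomials contribute only $\mathscr{O}(|r|^3t+|r|t^2+t^3)$. No cancellation is needed here.

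For $\tilde A_2$ one handles the four summands of Remark~\ref{rem.tildeA} in turn. The constant stays as $h\mathfrak{f}_h$, and $-\int_0^t[|g|^{\frac12}\mathcal{B}_1](0,s,\tau)\dd\tau$ yields, by a one-variable expansion in $\tau$, the terms $-[|g|^{\frac12}\mathcal{B}_1](0,s,0)t-\tfrac12\partial_3[|g|^{\frac12}\mathcal{B}_1](0,s,0)t^2$ up to $\mathscr{O}(t^3)$. The essential structural input is used in $\int_0^r[|g|^{\frac12}\mathcal{B}_3](\rho,s,t)\dd\rho$: since $\mathcal{B}_3(0,s,0)=\sin\theta(0,s)=0$ by the very definition of $\Gamma$ (Assumption~\ref{hyp.genericcancellation}) and $\partial_t\mathcal{B}_3(r,s,0)=0$ by the degeneracy lemma proved just above, one has $[|g|^{\frac12}\mathcal{B}_3](0,s,0)=0$ and $\partial_3[|g|^{\frac12}\mathcal{B}_3](0,s,0)=0$, hence $[|g|^{\frac12}\mathcal{B}_3](0,s,t)=\mathscr{O}(t^2)$; consequently the $\rho$-independent part of the integrand only contributes $\mathscr{O}(|r|t^2)$, while the $\rho$-linear and $\rho$-quadratic parts produce $\tfrac12\partial_1[|g|^{\frac12}\mathcal{B}_3](0,s,0)r^2$, $\tfrac16\partial_1^2[|g|^{\frac12}\mathcal{B}_3](0,s,0)r^3$ and the mixed term $\tfrac12\partial_1\partial_3[|g|^{\frac12}\mathcal{B}_3](0,s,0)r^2t$. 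This is precisely why no $r$- or $rt$-term carrying $\mathcal{B}_3$ survives. Finally $\int_0^r\int_0^t\partial_2[|g|^{\frac12}\mathcal{B}_2](\rho,s,\tau)\dd\rho\dd\tau$ contributes the $rt$-term, whose coefficient is $\partial_2[|g|^{\frac12}\mathcal{B}_2](0,s,0)$ and which (using the divergence-free identity $\sum_i\partial_i(|g|^{\frac12}\mathcal{B}_i)=0$ from \eqref{eq.coordinatesnewB} together with the vanishing of $\partial_3[|g|^{\frac12}\mathcal{B}_3]$ on $\Gamma$) equals the symmetric expression $(\partial_2[|g|^{\frac12}\mathcal{B}_2]+\partial_3[|g|^{\frac12}\mathcal{B}_3])(0,s,0)$ displayed, together with the $r^2t$-term $\tfrac12\partial_1\partial_2[|g|^{\frac12}\mathcal{B}_2](0,s,0)r^2t$, all remaining monomials being $\mathscr{O}(|r|t^2)$. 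Adding this $r^2t$-contribution to the one coming from the $\mathcal{B}_3$-integral gives the displayed coefficient $\tfrac12(\partial_1\partial_3[|g|^{\frac12}\mathcal{B}_3]+\partial_1\partial_2[|g|^{\frac12}\mathcal{B}_2])(0,s,0)$.

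There is no genuine obstacle: the statement is, as announced, a bookkeeping exercise. The only two points requiring care are (i) carrying the Taylor expansions to sufficient order and checking that the integrated remainders really are of the claimed size, which relies solely on $r$ and $t$ being small with $|r|$ bounded on $\tilde{\Omega}_{\delta_1,\delta_2}$, and (ii) invoking the two degeneracy facts for $\mathcal{B}_3$ on $\Gamma$ so that $[|g|^{\frac12}\mathcal{B}_3]$ and its $t$-derivative vanish there, which is what makes the $\mathcal{B}_3$-contribution start only at order $r^2$.
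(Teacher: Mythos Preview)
Your proposal is correct and follows exactly the approach the paper indicates: a direct Taylor expansion of the integrands in Remark~\ref{rem.tildeA} followed by termwise integration. You correctly identify the two structural inputs that make the $\tilde A_2$ expansion come out as stated, namely $\mathcal{B}_3(0,s,0)=0$ and $\partial_t\mathcal{B}_3(r,s,0)=0$, which together give $[|g|^{1/2}\mathcal{B}_3](0,s,0)=\partial_3[|g|^{1/2}\mathcal{B}_3](0,s,0)=0$. One minor remark: for the $rt$-coefficient you need only the vanishing of $\partial_3[|g|^{1/2}\mathcal{B}_3]$ on $\Gamma$ to rewrite $\partial_2[|g|^{1/2}\mathcal{B}_2]$ as the displayed sum; the divergence-free identity you invoke is not actually used at that step.
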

Therefore, using Definition \ref{def.phithetars},
\[
[|g|^{\frac12}\mathcal{B}_2](0,s,0) = \sin \phi(s), \qquad [|g|^{\frac12}\mathcal{B}_1](0,s,0) = \cos \phi(s), \qquad |g|^{\frac12}(0,s,0) = \alpha(s) = 1
\]
and recalling the notation
$$
\beta(s)=\partial_1[|g|^{\frac12}\mathcal{B}_3](0,s,0)
$$
we get, up to some remainders that we shall control later, a model vector potential, polynomial with respect to $(r,t)$, defined by
\begin{eqnarray}\label{eq.tildeAm}
&\widetilde{A}^{\mathrm{m}}_1(r,s,t)=t\sin\phi(s)+u_1(s)rt+v_1(s)r^2t+w_1(s) t^2\,, \nonumber \\
& \widetilde{A}^{\mathrm{m}}_2(r,s,t)=h\mathfrak{f}_h-t\cos\phi(s)+\beta(s)\frac{r^2}{2}+u_2(s) rt+v_2(s)r^2t+w_2(s)t^2+d(s)r^3\, \label{eq.Am}
\end{eqnarray}
where we do not give the exact expression of $d(s)$ and the other terms are defined as follows
\begin{equation}\label{eq.u1u2}
\begin{array}{ll}
u_1(s)=\partial_1[|g|^{\frac12}\mathcal{B}_2](0,s,0), &u_2(s)=\left(\partial_2[|g|^{\frac12}\mathcal{B}_2]
 +\partial_3[|g|^{\frac12}\mathcal{B}_3]\right)(0,s,0), \\
v_1(s)=\frac12\partial^2_1[|g|^{\frac12}\mathcal{B}_2](0,s,0),
& v_2(s)=\frac12\left(\partial_1\partial_3[|g|^{\frac12}\mathcal{B}_3]
+\partial_1\partial_2[|g|^{\frac12}\mathcal{B}_2]\right)(0,s,0), \\
 w_1(s)=\frac12\partial_3[|g|^{\frac12}\mathcal{B}_2](0,s,0), \quad
& w_2(s)=-\frac12\partial_3[|g|^{\frac12}\mathcal{B}_1](0,s,0)\,.	
\end{array}
\end{equation}
 Note that $\beta(s)=\partial_1\mathcal{B}_3(0,s,0)$
since $|g|(0,s,0)=\alpha(0,s)=1$ and $\mathcal{B}_3(0,s,0)=0$. Recalling that $\mathcal{B}_3(r,s,0)=-\mathbf{n}(\gamma(r,s))\cdot\mathbf{e}_3$ and Assumption \ref{hyp.genericcancellation}, we get that
\[
\beta(s)\geq c_0>0\,.
\]
\begin{remark}
When $\phi\equiv 0$, \emph{i.e.}, when $\Gamma$ lies in a plane, there is a quite explicit description of $\beta$. Indeed, in this case, $\mathbf{e}_3=-\partial_r\gamma(0,s)$ so that
\[\partial_r(-\mathbf{n}\cdot\mathbf{e}_3)=\dd\mathbf{n}(\partial_r\gamma,\partial_r\gamma)=\dd\mathbf{n}(\mathbf{e}_3,\mathbf{e}_3)\,,\]
which is positive when $\Omega$ is strictly convex. This is nothing but the curvature of $\partial\Omega$ in the direction of the magnetic field. The place where $\beta$ is minimal is where the magnetic field is the most tangent to the boundary.
\end{remark}

For reasons that will become clear later, we shall now perform a triple change of gauge.
The first one is associated with  the unique $2L$-periodic function $F$ of variable $s$ with
\[F'(s)=-\xi_0\cos\phi(s)+ F_0, \qquad F_0 = \frac{\xi_0}{2L}\int_0^{2L}\cos\phi(s)\dd s\,.
\]
The second one is associated to the function $G(r,s)$  with
\[
G(r,s) = r \xi_0 \sin \phi(s)\,.
\]
These changes will allow us later to center our problem at $\xi_0$, which is the  frequence naturally associated to the de Gennes operator.
Performing a last linear change of gauge associated with $H(s)$ where
\[
H(s) = F_0 - m \frac{2\pi}{2L} s
\]
for a suitable
$m \in \Z$ allow us to replace $h\mathfrak{f}_h - F_0$ by $h\tilde{\mathfrak{f}}_h $ with $\tilde{\mathfrak{f}}_h \in \left(-\frac{1}{2L},\frac{1}{2L}\right)$. The resulting operator is then
\begin{equation}   \label{eq.smalltildeLhd}
\tilde{\mathscr{L}}_h = e^{-i(F+G+H)/h^{\frac12}} \widetilde{\mathscr{L}}_{h,\delta_1,\delta_2}  e^{i(F+G+H)/h^{\frac12}}
\end{equation}
the spectrum of $\tilde{\mathscr{L}}_h $ and of $\widetilde{\mathscr{L}}_{h,\delta_1,\delta_2}$ are of course the same. Remembering \eqref{eq.tildeAm} (see also \eqref{eq.tildeLhd}), we can replace $\widetilde{A}^{\mathrm{m}}_1$ and $\widetilde{A}^{\mathrm{m}}_1$ by
\[\tilde{A}^{\mathrm{m}}_1(r,s,t)=(t-h^{\frac12}\xi_0)\sin\phi(s)+u_1(s)rt+v_1(s)r^2t+w_1(s) t^2\,,\]
and
\begin{multline}\tilde{A}^{\mathrm{m}}_2(r,s,t)=h\tilde{\mathfrak{f}}_h+(h^{\frac12}\xi_0-t)\cos\phi(s)
+\beta(s)\frac{r^2}{2}+u_2(s) rt \\
-  h^\frac12r\phi'\cos\phi  +v_2(s)r^2t+w_2(s)t^2+d(s)r^3\,.
\end{multline}

\subsection{Truncating variables in the right scales}

Propositions \ref{prop.loct} and \ref{prop.optimal-r} tell us that the eigenfunctions are localized near $t=0$ at the scale $h^{\frac12}$ and near $r=0$ at the scale $h^{\frac13}$. Jointly with the Taylor approximations, this leads to define the following approximation $\check{\mathscr{L}}^{\mathrm{m}}_h$ (and the associated quadratic form $\check{\mathscr{Q}}^{\mathrm{m}}_{h}$) of operator $\tilde{\mathscr{L}}_h$ in \eqref{eq.smalltildeLhd} (and therefore $\widetilde{\mathscr{L}}_{h,\delta_1,\delta_2}$ defined in \eqref{eq.tildeLhd}).

In order to build this operator,  we first introduce a smooth cutoff function $\chi_0$ equalling $1$ near $0$ and
remembering \eqref{eq.approxmetric}, we also introduce cutoff variables and the weight
\begin{equation}\label{eq.cutoffvariables}
\check r=r\chi_0(r/h^{1/3-\eta})\,,\quad \check t=t\chi_{0}(t/h^{1/2-\eta})\,,\quad a(r,s,t)=\alpha(\check r,s)^{\frac12}+k(\check r,s)\check t\,.
\end{equation}
Introducing then the new domain of integration \[\mathcal{V}=\mathbb{R}\times[0,2L)\times\R_+\,.\] 
We define then $\check{\mathscr{Q}}^{\mathrm{m}}_{h}$ as follows
\begin{multline*}
\check{\mathscr{Q}}^{\mathrm{m}}_{h}(\psi)=\int_{\mathcal{V}}\left( (1+\check tk_1(\check r,s))|(hD_r-\check A^{\mathrm{m}}_1)\psi|^2+ (\alpha(\check r,s)^{-1}+\check tk_2(\check r,s))|(hD_s-\check A^{\mathrm{m}}_2)\psi|^2\right)a\dd y\\
+\int_{\mathcal{V}} |hD_t\psi|^2a\dd y+2\Re\langle \check tk_{12} (hD_r-\check A^{\mathrm{m}}_1)\psi,(hD_s-\check A^{\mathrm{m}}_2)\psi\rangle_{L^2(a\dd y)}+\int_{\mathcal{V}}hV\left(\frac{r}{h^{\frac13-\frac\eta2}}\right)|\psi|^2a\dd y\,.
\end{multline*}
Here, $\check A^{\mathrm{m}}_1$ and $\check A^{\mathrm{m}}_2$ are defined as
\begin{multline} \label{eq.goodpotential}
 \qquad \qquad \check A^{\mathrm{m}}_1(r,s,t)=(t-h^\frac12\xi_0)\sin\phi(s)+u_1(s)\check r\check t+v_1(s)\check r^2\check t+w_1(s) \check t^2\,, \\
 \check A^{\mathrm{m}}_2(r,s,t)=h\mathfrak{f}_h-(t-h^\frac12\xi_0)\cos\phi(s)+\beta(s)\frac{\check r^2}{2}+u_2(s) \check r\check t-h^\frac12\check r \phi' \cos \phi \\ + v_2(s)\check r^2\check t+w_2(s)\check t^2+d(s)\check r^3\,
\end{multline}
and $V$ is a smooth even non-negative potential equal to $0$ in a neighborhood of $0$, equal to $1$ outside the unit ball and radially increasing. In the following we denote by $\check{\mathscr{L}}^{\mathrm{m}}_h$ the operator associated with $\check{\mathscr{Q}}^{\mathrm{m}}_{h}$.

Note that we do not put cutoff functions for the terms linear with respect to $t$, since we shall use later the de Gennes operator in variable $t$. The presence of $V$ is essentially artificial. Let us explain this. By inserting a cutoff function in the term involving $\beta$ (which is responsible for the localization with respect to $r$), we a priori lose the localization of the eigenfunctions of  $\check{\mathscr{L}}^{\mathrm{m}}_h$ with respect to $r$ at the scale $h^{\frac13}$. That is why we add a confining potential to keep this localization property. At the end of the analysis, all these cutoff functions will be removed: we introduce them here only in order to be able to use pseudodifferential tools in convenient classes of symbols.

We can now compare the spectra of $\widetilde{\mathscr{L}}_{h,\delta_1,\delta_2}$, $\tilde{\mathscr{L}}_{h}$ and $\check{\mathscr{L}}^{\mathrm{m}}_h$.

\begin{proposition}\label{prop.checkLm}
For all $n\geq 1$,
\[\lambda_n(\widetilde{\mathscr{L}}_{h,\delta_1,\delta_2}) =\lambda_n(\tilde{\mathscr{L}}_{h})= \lambda_n(\check{\mathscr{L}}^{\mathrm{m}}_h)+o(h^{\frac53})\,.\]	
\end{proposition}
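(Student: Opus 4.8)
The plan is to compare the three operators by a two-step argument: first the equality $\lambda_n(\widetilde{\mathscr{L}}_{h,\delta_1,\delta_2})=\lambda_n(\tilde{\mathscr{L}}_h)$, which is immediate since $\tilde{\mathscr{L}}_h$ is obtained from $\widetilde{\mathscr{L}}_{h,\delta_1,\delta_2}$ by the unitary gauge conjugation \eqref{eq.smalltildeLhd} (composition of the gauge transforms built from $F$, $G$, $H$ in Section \ref{sec.gauge} and Section \ref{sec.nicepseudo}), so the spectra coincide exactly. The substantive part is the comparison with $\check{\mathscr{L}}^{\mathrm{m}}_h$, and the strategy is the usual min-max sandwich: I would show that on the relevant spectral subspaces the quadratic forms $\tilde{\mathscr{Q}}_h$ and $\check{\mathscr{Q}}^{\mathrm{m}}_h$ differ by $o(h^{5/3})\|\psi\|^2$, using as a crucial input the localization estimates of Propositions \ref{prop.loct} and \ref{prop.optimal-r} (restated in Remark \ref{rem.optimalrt}): eigenfunctions live where $t=\mathcal{O}(h^{1/2-\eta})$ and $|r|=\mathcal{O}(h^{1/3-\eta})$, with exponential decay at the scales $h^{1/2}$ and $h^{1/3}$ respectively.

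The key steps, in order. (i) \emph{Upper bound for $\lambda_n(\tilde{\mathscr{L}}_h)$.} Take the $n$-dimensional space spanned by the first $n$ eigenfunctions of $\check{\mathscr{L}}^{\mathrm{m}}_h$; by the a priori localization of these eigenfunctions (the confining potential $V$ and the cutoffs keep them at scale $h^{1/3}$ in $r$ and $h^{1/2}$ in $t$, and one must verify this localization for $\check{\mathscr{L}}^{\mathrm{m}}_h$ by an Agmon argument paralleling Section \ref{sec.optimal-r}), the cutoff variables $\check r,\check t$ coincide with $r,t$, the function $V(r/h^{1/3-\eta/2})$ vanishes, and the Taylor remainders $\mathscr{O}(|r|^3t+|r|t^2+t^3)$, $\mathscr{O}(r^4+|r|^3t+\dots)$ in $\widetilde A^{\mathrm m}_1-\check A^{\mathrm m}_1$, $\widetilde A^{\mathrm m}_2-\check A^{\mathrm m}_2$, together with the $\mathscr{O}(t^2)$ errors in the metric coefficients \eqref{eq.approxmetric}, all contribute $o(h^{5/3})$ to the form after using $t\lesssim h^{1/2}$, $|r|\lesssim h^{1/3}$ in the relevant weighted norms; one gets $\tilde{\mathscr{Q}}_h(\psi)\le\check{\mathscr{Q}}^{\mathrm m}_h(\psi)+o(h^{5/3})\|\psi\|^2$ on this test space and concludes by min-max. (ii) \emph{Reverse bound}, symmetrically: take the span of the first $n$ eigenfunctions of $\tilde{\mathscr{L}}_h$, use Propositions \ref{prop.loct}–\ref{prop.optimal-r} to remove the cutoffs and to bound the same collection of remainder terms, obtaining $\check{\mathscr{Q}}^{\mathrm m}_h(\psi)\le\tilde{\mathscr{Q}}_h(\psi)+o(h^{5/3})\|\psi\|^2$, hence $\lambda_n(\check{\mathscr{L}}^{\mathrm m}_h)\le\lambda_n(\tilde{\mathscr{L}}_h)+o(h^{5/3})$. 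Combining (i) and (ii) gives the claim.

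The bookkeeping requires care at two points. First, one must check that \emph{each} discarded term is genuinely $o(h^{5/3})$: the linear-in-$t$ terms are kept on purpose (de Gennes structure in $t$), but terms like $u_1(s)rt$, $w_1(s)t^2$, $u_2(s)rt$, $d(s)r^3$ are \emph{not} discarded — they sit in $\check A^{\mathrm m}_j$ — so the only things actually dropped are the higher Taylor remainders and the $\mathscr{O}(t^2)$ metric errors; with $\|t\cdot\|\lesssim h^{1/2}\|\cdot\|$ and $\|r\cdot\|\lesssim h^{1/3}\|\cdot\|$ on eigenfunctions, e.g. a cross term $\langle(hD-A)\psi,(r^4\text{-remainder})\psi\rangle$ is $\mathcal{O}(h^{1/2})\cdot\mathcal{O}(h^{4/3})=\mathcal{O}(h^{11/6})=o(h^{5/3})$, and similarly for the others; this is the main obstacle, purely a matter of organizing the estimates. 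Second, since the form domains of $\tilde{\mathscr{L}}_h$ (on $\Omega_{\delta_1,\delta_2}$ with mixed Dirichlet/Neumann) and of $\check{\mathscr{L}}^{\mathrm m}_h$ (on $\mathcal{V}=\mathbb{R}\times[0,2L)\times\mathbb{R}_+$) differ, one transfers test functions by multiplying with a cutoff adapted to the $h^{1/3-\eta}$, $h^{1/2-\eta}$ scales; the exponential decay in Remark \ref{rem.optimalrt} makes the resulting truncation errors $\mathcal{O}(e^{-ch^{-\eta'}})=o(h^\infty)$, so this is harmless. I expect step (ii), combined with verifying the a priori localization of the eigenfunctions of $\check{\mathscr{L}}^{\mathrm m}_h$ used in step (i), to be where the real work lies; everything else is a routine, if lengthy, remainder analysis.
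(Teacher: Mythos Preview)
Your proposal is correct and follows essentially the same route as the paper's proof: gauge unitarity for the first equality, then a min-max sandwich in which the Agmon estimates of Remark \ref{rem.optimalrt} (and their analogue for $\check{\mathscr{L}}^{\mathrm{m}}_h$) control the Taylor remainders in the metric and vector potential, yielding an $\mathscr{O}(h^{11/6})=o(h^{5/3})$ error. The paper carries out your step (ii) in detail, arriving at the same cross-term bound $h^{1/2}\cdot h^{4/3}=h^{11/6}$ that you identify, and then states that the converse direction (your step (i)) follows symmetrically once one checks the localization of the eigenfunctions of $\check{\mathscr{L}}^{\mathrm{m}}_h$, exactly as you anticipate.
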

\begin{proof}
As already noticed, the  first equality is a direct consequence of the change of gauge
in \eqref{eq.smalltildeLhd}. We only prove a lower bound for $\lambda_n(\tilde{\mathscr{L}}_{h})$, the lower bound following from quite similar considerations.	Let us consider
\[\mathscr{E}_N(h)=\underset{1\leq j\leq N}{\mathrm{span}} \chi_h\psi_{j,h}\,,\]
where $(\psi_{j,h})$ is an orthonormal family of eigenfunctions associated with the familly of eigenvalues $(\lambda_j( \tilde{\mathscr{L}}_{h}))$, and where $\chi_h=\chi_h(r,t)$ is a smooth cutoff function supported in $\{|r|< h^{1/3-}\}\cap\{0<t<h^{\frac12-}\}$. By using the Agmon estimates, we see that $\mathscr{E}_N(h)$ is of dimension $N$ for $h$ small enough and that, for all $\psi\in \mathscr{E}_N(h)$,
\[\tilde{\mathscr{Q}}_{h}(\psi)\leq \lambda_N( \tilde{\mathscr{L}}_{h}) \|\psi\|^2_{L^2(|g|^{\frac12}\dd y)}+\mathscr{O}(h^\infty)\|\psi\|^2\,.\]
Since $\lambda_N(\tilde{\mathscr{L}}_h)=\mathscr{O}(h)$, the Taylor expansion $a$ of $|g|^{\frac12}$ and Agmon estimates with respect to $t$ (see Proposition \ref{prop.loct} or Remark \ref{rem.optimalrt}) allow to replace $|g|^{\frac12}$ by $a$ and we get
\begin{equation}\label{eq.ubQtilde}
 \tilde{\mathscr{Q}}_{h}(\psi)\leq \lambda_N(\tilde{\mathscr{L}}_h) \|\psi\|^2_{L^2(a\dd y)}+Ch^2\|\psi\|^2_{L^2(a\dd y)}\,.
\end{equation}
Then, by approximating the terms of the metrics (see \eqref{eq.approxmetric}), we get
\begin{multline*}
 \tilde{\mathscr{Q}}_{h}(\psi)\geq
\int_{ \tilde{\Omega}_{\delta_1, \delta_2}}\left( (1+ tk_1( r,s))|(hD_r-\tilde A_1)\psi|^2+ (\alpha^{-1}+ tk_2( r,s))|(hD_s-\tilde A_2)\psi|^2\right)a\dd y\\
+\int_{ \tilde{\Omega}_{\delta_1, \delta_2}} |hD_t\psi|^2a\dd y+2\Re\langle  tk_{12} (hD_r- \tilde A_1)\psi,(hD_s- \tilde A_2)\psi\rangle_{L^2(a\dd y)}-Ch^2\|\psi\|^2_{L^2(a\dd y)}\,.
\end{multline*}
By approximating the vector potential (from Lemma \ref{lem.approxmodel} and noticing that the change of gauge \eqref{eq.smalltildeLhd} does not affect the error terms), we find that
\begin{multline*}
 \tilde{\mathscr{Q}}_{h}(\psi)  \geq
\int_{\tilde{\Omega}_{\delta_1, \delta_2}}\left( (1+ tk_1( r,s))|(hD_r- {\tilde{A}}^{\mathrm{m}}_1)\psi|^2+ (\alpha^{-1}+ tk_2( r,s))|(hD_s-{\tilde{A}}^{\mathrm{m}}_2)\psi|^2\right)a\dd y\\
+\int_{\tilde{\Omega}_{\delta_1, \delta_2}} |hD_t\psi|^2a\dd y+2\Re\langle  tk_{12} (hD_r- {\tilde{A} }^{\mathrm{m}}_1)\psi,(hD_s- {\tilde{A}}^{\mathrm{m}}_2)\psi\rangle_{L^2(a\dd y)}\\
-Ch^2\|\psi\|^2_{L^2(a\dd y)}-C\|(-ih\nabla-\mathbf{\tilde{A}}^{\mathrm{m}})\psi\|(\|r^3 t\psi\|+\|r t^2\psi\|+\|t^3\psi\|+\|r^4\psi\|)\,.
\end{multline*}
By using the Agmon estimates with respect to $r$ and $t$ (see Remark \ref{rem.optimalrt}), and
the fact that $\|(-ih\nabla-\mathbf{{\tilde{A}}}^{\mathrm{m}})\psi\|\leq Ch^{\frac12}\|\psi\|$, we get
\begin{multline*}
\tilde{\mathscr{Q}}_{h}(\psi) \geq
\int_{ \tilde{\Omega}_{\delta_1, \delta_2} }\left( (1+ tk_1( r,s))|(hD_r- {\tilde{A}}^{\mathrm{m}}_1)\psi|^2+ (\alpha^{-1}+ tk_2( r,s))|(hD_s-{\tilde{A}}^{\mathrm{m}}_2)\psi|^2\right)a\dd y\\
+\int_{ \tilde{\Omega}_{\delta_1, \delta_2}} |hD_t\psi|^2a\dd y+2\Re\langle  tk_{12} (hD_r- {\tilde{A}}^{\mathrm{m}}_1)\psi,(hD_s- {\tilde{A}}^{\mathrm{m}}_2)\psi\rangle_{L^2(a\dd y)}\\
-Ch^{\frac{11}{6}}\|\psi\|^2_{L^2(a\dd y)}\,.
\end{multline*}
Then, we can insert cutoff functions in the coefficients of the metrics and of the vector potential (up to exponentially small remainders), and we infer that, for all $\psi\in\mathscr{E}_N(h)$,
\[ \check{\mathscr{Q}}^{\mathrm{m}}_{h}(\psi)-Ch^{\frac{11}{6}}\|\psi\|^2_{L^2(a\dd y)}\leq \tilde{\mathscr{Q}}_{h}(\psi)\,.\]
With \eqref{eq.ubQtilde}, this gives
\[ \check{\mathscr{Q}}^{\mathrm{m}}_{h}(\psi)-Ch^{\frac{11}{6}}\|\psi\|^2_{L^2(a\dd y)}\leq \lambda_N( \tilde{\mathscr{L}}_{h}) \|\psi\|^2_{L^2(a\dd y)}\,.\]
The min-max theorem implies that
\[\lambda_N(\check{\mathscr{L}}^{\mathrm{m}}_h)\leq \lambda_N(\tilde{\mathscr{L}}_{h})+Ch^{\frac{11}{6}}\,\]
and the lower bounds in the statement follows since $h^{\frac{11}{6}} = o(h^{\frac{5}{3}})$.

The converse inequality of the proposition can be obtained by similar arguments since the eigenfunctions associated with $\check{\mathscr{L}}^{\mathrm{m}}_h$ are exponentially localized at the scales $h^{\frac12}$ and $h^{\frac13-\eta}$ with respect to $t$ and $r$, respectively, as could be shown following exactly the same procedure leading to Remark \ref{rem.optimalrt}.
\end{proof}

\subsection{A rescaling and first pseudodifferential properties}\label{sec.rescalingcheckbreve}
We can now focus on the spectral analysis of $\check{\mathscr{L}}^{\mathrm{m}}_h$. Actually, it will be convenient to consider a rescaled version of $\check{\mathscr{L}}^{\mathrm{m}}_h$. We let
\[t=h^{\frac12}\breve{t}\,,\quad r=h^{\frac13}\breve{r}\,,\quad \hbar=h^{\frac16}\,,\quad \mu=\hbar^2\,,\]
and we divide the operator by $h$. This gives a new operator $\mathscr{N}_\hbar$ (which will be at the center of our coming analysis) given by
\begin{multline*}
 \mathscr{N}_\hbar = a^{-1}_\hbar(\hbar D_r-\breve{A}^{\mathrm{m}}_1)a_\hbar a_{\hbar,1}(\hbar D_r-\breve{A}^{\mathrm{m}}_1)+a^{-1}_\hbar(\mu\hbar D_s-\breve{A}^{\mathrm{m}}_2)a_\hbar a_{\hbar,2}(\mu\hbar D_s-\breve{A}^{\mathrm{m}}_2) \\
+a^{-1}_\hbar D_{\breve t}a_\hbar D_{\breve t}
+V(h^{\frac\eta2}\breve r)
+\hbar^3\breve{t}\chi_{h,3}a_{\hbar}^{-1}\Big[(\hbar D_r-\breve{A}^{\mathrm{m}}_1)a_\hbar k_{12}(\hbar^2\breve{r}\chi_{h,1},s)(\hbar^2\mu D_s-\breve{A}^{\mathrm{m}}_2)\\
+(\hbar D_r-\breve{A}^{\mathrm{m}}_2)a_\hbar k_{12}(\hbar^2\breve{r}\chi_{h,1},s)(\hbar^2\mu D_s-\breve{A}^{\mathrm{m}}_1)\Big]
\,,
\end{multline*}
where
\[\begin{split}
a_\hbar(\breve{r},s,\breve{t})&=\alpha^{\frac12}(\hbar^2\breve{r}\chi_{h,1},s)+\hbar^3\chi_{h,3}\breve{t}k(\hbar^2\breve{r}\chi_{h,1},s)\,,\\ a_{\hbar,1}(\breve{r},s,\breve{t})&=1+\hbar^3\breve{t}\chi_{h,3}k_1(\hbar^2\breve{r}\chi_{h,1},s)\,,\\ a_{\hbar,2}(\breve{r},s,\breve{t})&=\alpha^{-1}(\hbar^2\breve{r}\chi_{h,1},s)+\hbar^3\chi_{h,3}\breve{t}k_2(\hbar^2\breve{r}\chi_{h,1},s)\,, \end{split}\]
and
\[\breve{A}^{\mathrm{m}}_1= (\breve t-\xi_0) \sin\phi(s)+u_1(s)\hbar^2\chi_{h,1}\chi_{h,3}\breve r\breve t+\hbar^4v_1(s)\chi^2_{h,1}\chi_{h,3}\breve r^2\breve t+\hbar^3w_1(s)\chi_{h,3}\breve t^2\,,\]
\begin{multline*}
\breve A^{\mathrm{m}}_2=\hbar^3\mathfrak{f}_h- (\breve t-\xi_0) \cos\phi(s)+\mu^{\frac12}\beta(s)\chi^2_{h,1}\frac{\breve r^2}{2}+u_2(s)\chi_{h,1}\chi_{h,3} \hbar^2\breve r\breve t - \hbar^2 \chi_{h,1}\breve r \phi' \cos\phi \\ + v_2(s)\hbar^4\chi^2_{h,1}\chi_{h,3}\breve r^2\breve t
+w_2(s)\hbar^3\chi^2_{h,3}\breve t^2
+d(s)\hbar^3\chi^2_{h,1}\breve r^3\,.
\end{multline*}
In the lines above, the cutoff functions $\chi_{h,j}$ (at the scale $h^{-\eta}$) are reminiscent of \eqref{eq.cutoffvariables} and the index $j$ refers to the variable ($j=1$ corresponds to $\breve r$ and $j=3$ to $\breve t$). 

In the following, we drop the \it breve \rm accents and consider (mostly) $\mu$ as a parameter. The operator $\mathscr{N}_\hbar$ can be seen as an $\hbar$-pseudodifferential operator with operator symbol:
\begin{equation}\label{eq.symbol0}
\mathscr{N}_\hbar=\mathrm{Op}^{\mathrm{W}}_\hbar(n_\hbar)\,,
\end{equation}
with $n_\hbar(r,s,\rho,\sigma)=n_0+\hbar n_1+\hbar^2 n_2+\ldots$, where the principal symbol is
\begin{equation}\label{eq.n0good}
n_0=D^2_t+(\rho-(t-\xi_0)\sin\phi(s))^2
+\left(\mu\sigma+(t-\xi_0)\cos\phi(s)-\mu^{\frac12}\beta\chi_{h,1}^2\frac{r^2}{2}\right)^2+V_h(r)\,
\end{equation}
where $V_h(r) = V(r h^\frac\eta2)$ is the artificial confining potential coming from \eqref{eq.goodpotential} and where we recall that it can formally be replaced by $0$.

\begin{remark}\label{rem.explicationgauge}
Let us explore a little the structure of operator $\mathscr{N}_\hbar$ and the reason of the gauge change introduced in \eqref{eq.smalltildeLhd}. We observe that, when $\mu = 0$ and $V_h$ replaced by $0$, the operator-valued symbol $n_0$ is equal to
\begin{equation}
\begin{split}
& D^2_t+(\rho-(t-\xi_0)\sin\phi(s))^2+{(t-\xi_0)}^2\cos^2\phi(s) \\
& \qquad \qquad = D^2_t+(\xi_0-t + \rho \sin\phi(s))^2+ \rho^2\cos^2\phi(s),
\end{split}
\end{equation}
for which the minimum of the spectrum is $\Theta_0$, attained only when $\rho=0$ in virtue of the general properties of the de Gennes operator recalled below \eqref{eq:bigthetazero}. This was the main motivation of the preceding changes of gauge, including the shift by $\xi_0$, and will be in the core of the coming analysis.
\end{remark}
With Remark \ref{rem.explicationgauge} in mind, we now introduce
\begin{equation}\label{eq.p}
p:=\mu\sigma-\mu^{\frac12}\beta\chi_{h,1}^2\frac{r^2}{2}\,
\end{equation}
so that the principal operator valued symbol $n_0$ of $\mathscr{N}_\hbar$ takes the form
\begin{equation}\label{eq.n0verygood}
\begin{split}
n_0&=D_t^2+(\rho+(\xi_0-t)\sin\phi)^2+\left(\mu\sigma-(\xi_0-t)\cos\phi-\mu^{\frac12}\beta\chi_{h,1}^2\frac{r^2}{2}\right)^2+V_h\\
&=D_t^2+(\xi_0-t+\rho\sin\phi-p\cos\phi)^2+(\rho\cos\phi+p\sin\phi)^2+V_h\,.
\end{split}
\end{equation}
The subprincipal symbol is then
\[n_1=0\,,\]
and the next term in the expansion is
\begin{multline*}
n_2=2u_2((\xi_0-t)\cos\phi-p)\chi_{h,1}\chi_{h,3}rt+2(p-(\xi_0-t)\cos\phi) r\phi'\cos\phi \\
-2u_1 \chi_{h,1}\chi_{h,3}rt(\rho+(\xi_0-t)\sin\phi)
+r\chi_{h,1}\partial_r(\alpha^{-1})(p-(\xi_0-t)\cos\phi)^2\,.
\end{multline*}
Note that $n_2$ can also be written as
\begin{equation} \label{eq.n2n2tilde}
n_2=rL_\hbar+\tilde n_2\,,
\end{equation}
with
\begin{multline*}
L_\hbar=-2u_2(\xi_0-t)^2\chi_{h,1}\chi_{h,3}\cos\phi
+2u_1 \chi_{h,1}\chi_{h,3}(\xi_0-t)^2\sin\phi \\ +\chi_{h,1}\partial_r(\alpha^{-1})(\xi_0-t)^2\cos^2\phi
+(\xi_0-t)L_{h,1}\,,
\end{multline*}
with
\[L_{\hbar,1}=-2\chi_{h,1}\phi'\cos^2\phi+2\xi_0\chi_{h,1}\chi_{h,3}u_2\cos\phi-2u_1\xi_0\chi_{h,1}\chi_{h,3}\sin\phi\,,\]
and
\[\tilde n_2=-2rtpu_2\chi_{h,1}\chi_{h,3}+2p r\phi'\cos\phi+r\chi_{h,1}\partial_r(\alpha^{-1})\left(p^2-2p(\xi_0-t)\cos\phi \right)-2u_1 \chi_{h,1}\chi_{h,3}rt\rho\,.\]

Looking at the terms involving $(\xi_0-t)^2$ and forgetting the cutoff functions,
we notice an algebraic cancellation in $L_\hbar$.
\begin{lemma}\label{lem.annulr}
We have
\[-2u_2\cos\phi+2u_1\sin\phi+\cos^2\phi\,\partial_r(\alpha^{-1})=0\,.\]	
\end{lemma}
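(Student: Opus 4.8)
The plan is to verify this as a pointwise identity in $s$, using only the geometric definitions from Section \ref{sec.coordnearboundary} and the Taylor coefficients introduced in \eqref{eq.u1u2}. All the quantities involved are evaluated at a boundary point $\gamma(0,s)\in\Gamma$, where we have the normalization $|g|^{\frac12}(0,s,0)=\alpha(0,s)=1$ (Lemma \ref{lem.alpha=1}), $\mathcal{B}_3(0,s,0)=0$, and $\mathcal{B}_1(0,s,0)=\cos\phi(s)$, $\mathcal{B}_2(0,s,0)=\sin\phi(s)$ (Definition \ref{def.phithetars} with $\theta(0,s)=0$ on $\Gamma$). First I would rewrite the three coefficients in terms of derivatives of $\mathcal{B}$ and $\alpha$: from \eqref{eq.u1u2}, $u_1=\partial_1[|g|^{\frac12}\mathcal{B}_2](0,s,0)$ and $u_2=(\partial_2[|g|^{\frac12}\mathcal{B}_2]+\partial_3[|g|^{\frac12}\mathcal{B}_3])(0,s,0)$, while $\partial_r(\alpha^{-1})(0,s)=-\partial_r\alpha(0,s)=2\kappa_g(s)$ by Lemma \ref{lem.alpha=1}. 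So the identity to prove becomes, after multiplying through, a linear relation among $\partial_1[|g|^{\frac12}\mathcal{B}_2]$, $\partial_2[|g|^{\frac12}\mathcal{B}_2]$, $\partial_3[|g|^{\frac12}\mathcal{B}_3]$ and $\kappa_g$, weighted by $\cos\phi$ and $\sin\phi$.

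The key input is the divergence-free condition $\nabla\cdot(|g|^{\frac12}\mathcal{B})=0$ from \eqref{eq.coordinatesnewB}, i.e. $\partial_1[|g|^{\frac12}\mathcal{B}_1]+\partial_2[|g|^{\frac12}\mathcal{B}_2]+\partial_3[|g|^{\frac12}\mathcal{B}_3]=0$ identically. Evaluating this on $\Gamma$ lets me eliminate $\partial_2[|g|^{\frac12}\mathcal{B}_2]+\partial_3[|g|^{\frac12}\mathcal{B}_3]=u_2$ in favor of $-\partial_1[|g|^{\frac12}\mathcal{B}_1](0,s,0)$. Then the target identity reduces to expressing $\partial_1[|g|^{\frac12}\mathcal{B}_1](0,s,0)$, $\partial_1[|g|^{\frac12}\mathcal{B}_2](0,s,0)$ and $\kappa_g$ through a single geometric relation. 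For that I would differentiate \eqref{eq.e3}, namely $\mathbf{e}_3=\mathcal{B}_1(\mathrm{Id}-t\mathrm{d}\mathbf{n})(\partial_r\gamma)+\mathcal{B}_2(\mathrm{Id}-t\mathrm{d}\mathbf{n})(\partial_s\gamma)-\mathcal{B}_3\mathbf{n}$, with respect to $r$ at $(0,s,0)$, and take the scalar product with $\partial_r\gamma$ and with $\partial_s\gamma$. Since $\mathbf{e}_3$ is constant, the left side is $0$; using $|\partial_r\gamma|=1$, $\langle\partial_r\gamma,\partial_s\gamma\rangle=0$ (Lemma \ref{lem.normar}), $\alpha(0,s)=1$, $\mathcal{B}_3(0,s,0)=0$, and the ODE $\partial_r^2\gamma=-K(\partial_r\gamma,\partial_r\gamma)\mathbf{n}$ together with $\partial_r\partial_s\gamma$ expressed via the Weingarten map, one obtains two scalar equations. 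Combining them with the definition of $\kappa_g$ via $\partial_s^2\gamma(0,s)=-\kappa_g\,\partial_s\gamma\times\mathbf{n}+\kappa_n\mathbf{n}$ (the relation recalled just before Lemma \ref{lem.alpha=1}), and recalling $\mathcal{B}_1=\cos\phi$, $\mathcal{B}_2=\sin\phi$ on $\Gamma$, should yield exactly the missing linear relation.

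Assembling: substitute the divergence identity, then the two geodesic/Weingarten relations, into the left-hand side of the claimed identity, and check that everything cancels. I expect the main obstacle to be purely bookkeeping: keeping track of which derivatives act on $|g|^{\frac12}$ versus on $\mathcal{B}_i$ (recall $\partial_1|g|^{\frac12}(0,s,0)=\frac12\partial_r\alpha(0,s)=-\kappa_g(s)$ by Lemma \ref{lem.alpha=1}, which is precisely why the $\kappa_g$ term and hence $\partial_r(\alpha^{-1})$ enters), and correctly handling the $t$-derivative term $\partial_3[|g|^{\frac12}\mathcal{B}_3](0,s,0)$ hidden inside $u_2$ — note that $\partial_t\mathcal{B}_3(r,s,0)=0$ by the degeneracy lemma proved above, so $\partial_3[|g|^{\frac12}\mathcal{B}_3](0,s,0)=\mathcal{B}_3\,\partial_3|g|^{\frac12}=0$ as well on $\Gamma$. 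With that simplification $u_2=\partial_2[|g|^{\frac12}\mathcal{B}_2](0,s,0)$, and the computation becomes a short exercise in the first fundamental form, which I would then carry out directly.
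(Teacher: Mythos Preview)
Your plan is correct and rests on the same two inputs as the paper's proof: the divergence-free identity $\nabla\cdot(|g|^{1/2}\mathcal{B})=0$ to rewrite $u_2=-\partial_1[|g|^{1/2}\mathcal{B}_1](0,s,0)$, and the constancy of $\mathbf{e}_3$ in the tubular frame. The only difference is in how the second input is exploited. You propose to differentiate the \emph{vector} identity \eqref{eq.e3} in $r$ at $t=0$ and project onto $\partial_r\gamma$ and $\partial_s\gamma$; this indeed yields (after using $\partial_r^2\gamma\perp T\partial\Omega$, $\langle\partial_r\partial_s\gamma,\partial_r\gamma\rangle=0$, and $\langle\partial_r\partial_s\gamma,\partial_s\gamma\rangle=\tfrac12\partial_r\alpha$) the two relations $\partial_1\mathcal{B}_1(0,s,0)=0$ and $\partial_1\mathcal{B}_2(0,s,0)=-\tfrac12\sin\phi\,\partial_r\alpha$, from which the lemma follows immediately. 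The paper instead differentiates the \emph{scalar} normalization \eqref{eq.norm1}, i.e.\ $\partial_r(\mathcal{B}_1^2+\alpha\mathcal{B}_2^2+\mathcal{B}_3^2)|_{r=0}=0$, which is precisely the combination $2\mathcal{B}_1\partial_1\mathcal{B}_1+2\mathcal{B}_2\partial_1\mathcal{B}_2+\mathcal{B}_2^2\partial_r\alpha=0$ needed; this is one line shorter since it avoids computing the two projections separately. Your componentwise route is slightly more laborious but has the small bonus of showing $u_1\equiv 0$ on $\Gamma$, which the paper's argument does not isolate. The detour through $\partial_s^2\gamma$ and $\kappa_n$ you mention is not actually needed; $\kappa_g$ enters only through $\partial_r\alpha(0,s)=-2\kappa_g(s)$ from Lemma~\ref{lem.alpha=1}.
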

\begin{proof}
Recalling \eqref{eq.u1u2} and Lemma \ref{lem.metrict=0}, we see that
\[u_1=\frac12\sin\phi\,\partial_r\alpha+\partial_1\mathcal{B}_2\,,\quad u_2=-\frac12\cos\phi\,\partial_r\alpha-\partial_1\mathcal{B}_1\,,\]
where we also used Lemma \ref{lem.alpha=1}, \eqref{lem.e3r=t=0}  and Definition \ref{def.phi}. It follows that
\begin{equation*}
\begin{split}
-2u_2\cos\phi+2u_1\sin\phi+\cos^2\phi\,\partial_r(\alpha^{-1})&=\partial_r\alpha+2\cos\phi\,\partial_1\mathcal{B}_1+2\sin\phi\,\partial_1\mathcal{B}_2-\cos^2\phi\,\partial_r\alpha\\
&=2\mathcal{B}_1\,\partial_1\mathcal{B}_1+2\mathcal{B}_2\,\partial_1\mathcal{B}_2+\mathcal{B}_2^2\,\partial_r\alpha=0\,,
\end{split}
\end{equation*}
where we used \eqref{eq.norm1} (and its derivative with respect to $r$, at $r=0$).
\end{proof}
This cancellation will be used in Section \ref{sec.parametrix} when building an approximate parametrix for $\mathscr{N}_\hbar$. Anyway, a last preparation result is needed in order to be able to use pseudodifferential techniques. It is the aim of the next section.

\subsection{Microlocalization}\label{sec.roughmicro}
From \eqref{eq.n0verygood} and recalling Remark \ref{rem.explicationgauge}, we notice that the lowest eigenvalues of $\mathscr{N}_\hbar$ satisfy
\[\lambda_n(\mathscr{N}_\hbar)=\Theta_0+o(1)\,.\]
The corresponding eigenfunctions are actually microlocalized with respect to $\rho$ and $\mu\sigma$. Consider
\[\mathscr{N}^c_\hbar=\mathrm{Op}^{\mathrm{W}}_\hbar(n_\hbar^c)\,,\]
where $n_\hbar^c(r,s,\rho,\sigma)$ is obtained by replacing $\rho$ by $\Xi_1(\rho)$ where $\Xi_1\in S(1)$ is odd, increasing, and coincides with the identity near $\rho=0$, and by replacing $\mu\sigma$ by $\Xi_2(\mu\sigma)$ where $\Xi_2\in S(1)$ is  such that $\Xi_2(x)=x$ on $[-M,M]$ and equals $\pm 2M$ away from a compact set.

Recall here that $S(1)$ is the set of $\mathscr{C}^\infty$ bounded symbols as well as their derivatives. The functions $\Xi_1$ and $\Xi_2$ are also chosen so that, uniformly with respect to $s$ (or $\phi(s)$),
the function
\[(\rho,\mu\sigma)\mapsto\mu_1^{\mathrm{dG}}(\xi_0+\Xi_1(\rho)\sin\phi- \Xi_2(\mu\sigma)\cos\phi)+(\Xi_1(\rho)\cos\phi+ \Xi_2(\mu\sigma)\sin\phi)^2\,,\]
has still a unique minimum at $(0,0)$, which is non-degenerate and not attained at infinity.
Note this is possible in virtue of the general properties of $\mu_1^{\mathrm{dG}}$ recalled below
\eqref{eq:bigthetazero} and the fact that
\[
(\rho, \mu\sigma) \longmapsto (\rho \sin\phi- \mu\sigma \cos\phi, \rho\cos\phi+ \mu\sigma \sin\phi)
\]
is a rotation and therefore an isometry uniformly with respect to $s \in \Gamma$.
For the sake of shortness, we let
\begin{equation}\label{eq.trhotp}
\tilde\rho=\Xi_1(\rho)\,,\qquad \tilde p=\Xi_2(\mu\sigma)-\mu^{\frac12}\beta\chi_{h,1}^2\frac{r^2}{2}\,.
\end{equation}
With this notation, we have
\begin{equation*}
n^c_0(r,s,\rho,\sigma)=D_t^2+(\xi_0-t+\tilde\rho\sin\phi- \tilde p\cos\phi)^2+(\tilde\rho\cos\phi+ \tilde p\sin\phi)^2+V_h\,.
\end{equation*}

\begin{proposition}\label{prop.micro}
The low-lying spectra of $\mathscr{N}^c_\hbar$ and $\mathscr{N}_\hbar$ coincide modulo $\mathscr{O}(\hbar^\infty)$.	
\end{proposition}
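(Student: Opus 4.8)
The plan is to establish a stronger microlocalization statement — that every eigenfunction of $\mathscr{N}_\hbar$ attached to an eigenvalue $\Theta_0+o(1)$ is concentrated, modulo $\mathscr{O}(\hbar^\infty)$, in the region where $\rho$ and $\mu\sigma$ are bounded, i.e. exactly where $\Xi_1$ and $\Xi_2$ act as the identity — and then to deduce the coincidence of the low-lying spectra by a routine quasimode comparison.

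\emph{An a priori energy estimate.} First I would exploit \eqref{eq.n0verygood}: with $p$ as in \eqref{eq.p} one has $n_0=\mathfrak{h}_{\xi_0+\rho\sin\phi-p\cos\phi}+(\rho\cos\phi+p\sin\phi)^2+V_h\geq\Theta_0+m$ as operators in $t$ (since $\mathfrak{h}_\xi\geq\mu_1^{\mathrm{dG}}(\xi)$), where
\[m(r,s,\rho,\sigma):=\bigl(\mu_1^{\mathrm{dG}}(\xi_0+\rho\sin\phi-p\cos\phi)-\Theta_0\bigr)+(\rho\cos\phi+p\sin\phi)^2+V_h\geq 0\,.\]
Because $(\rho,p)\mapsto(\rho\sin\phi-p\cos\phi,\rho\cos\phi+p\sin\phi)$ is a rotation uniformly in $s$, and $\mu_1^{\mathrm{dG}}-\Theta_0$ vanishes only (and quadratically) at $\xi_0$ while staying bounded away from $0$ at infinity, $m$ is bounded below by a constant $c_0>0$ outside a fixed set $\{|\rho|\leq C,\ |p|\leq C\}$; since $\mu^{1/2}\beta\chi_{h,1}^2r^2/2=\mathscr{O}(h^{1/6-\eta})$ on the support of the eigenfunctions (the potential $V_h$ confines $r$ and $\eta<\frac{1}{12}$), this set is in turn contained in $\{|\rho|\leq C,\ |\mu\sigma|\leq C\}$. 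Using $n_1=0$, controlling the contribution of $\hbar^2n_2+\dots$ by $\hbar^2$ times quantities bounded on the form domain of $\check{\mathscr{Q}}^{\mathrm{m}}_{h}$, and applying Gårding's inequality to the operator-valued symbol, I would get $\mathscr{N}_\hbar\geq\mathrm{Op}^{\mathrm{W}}_\hbar(m)+\Theta_0-C\hbar$, hence $\langle\mathrm{Op}^{\mathrm{W}}_\hbar(m)\psi,\psi\rangle=o(1)\|\psi\|^2$ for any eigenfunction $\psi$ with eigenvalue $\Theta_0+o(1)$. In particular, for fixed $\delta>0$ and any $\chi\in S(1)$ supported where $|\rho|\geq\delta$ or $|\mu\sigma|\geq\delta$, one obtains the rough estimate $\|\mathrm{Op}^{\mathrm{W}}_\hbar(\chi)\psi\|=o(1)\|\psi\|$.

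\emph{Bootstrap via a microlocal elliptic parametrix.} Choosing $\Xi_1,\Xi_2$ so that $\Xi_1=\mathrm{id}$ on $\{|\rho|\leq 2C\}$ and $\Xi_2(\mu\sigma)=\mu\sigma$ on $\{|\mu\sigma|\leq 2C\}$ — compatible with the non-degeneracy demanded just before the proposition, the relevant change of variables being an isometry — the symbol $n_0-\lambda$ is, on the set $\{m\geq c_0/2\}$ and for $\hbar$ small, a uniformly invertible operator in $t$ with bounded inverse (its bottom is $\geq\Theta_0+c_0/2-\lambda\geq c_0/4$), so $\mathscr{N}_\hbar-\lambda$ is microlocally elliptic there. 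Once the artificial cutoffs $\chi_{h,j}$ and the potential $V_h$ have placed all symbols in a reasonable class and the previous step has reduced the matter to a bounded region of $(\rho,\mu\sigma)$, a standard microlocal parametrix construction (in the spirit of \cite{M07,Keraval}) yields an operator $\mathscr{E}$ with $\mathscr{E}(\mathscr{N}_\hbar-\lambda)=\mathrm{Op}^{\mathrm{W}}_\hbar(\tilde\chi)+\mathscr{O}(\hbar^\infty)$, where $\tilde\chi$ is a cutoff supported in $\{m\geq c_0/2\}$ and equal to $1$ on $\mathrm{supp}\,\chi$; testing on $\psi$ and using $(\mathscr{N}_\hbar-\lambda)\psi=0$ gives $\mathrm{Op}^{\mathrm{W}}_\hbar(\chi)\psi=\mathscr{O}(\hbar^\infty)\|\psi\|$. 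The same reasoning applies verbatim to $\mathscr{N}^c_\hbar$, whose principal symbol $n^c_0$ satisfies the analogous lower bound with $\tilde\rho,\tilde p$ in place of $\rho,p$.

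\emph{Conclusion and the main difficulty.} With this choice of $\Xi_1,\Xi_2$ the symbols $n_\hbar$ and $n^c_\hbar$ coincide on a neighbourhood of $\{|\rho|\leq C,\ |\mu\sigma|\leq C\}$, so the microlocalization above gives $(\mathscr{N}^c_\hbar-\mathscr{N}_\hbar)\psi=\mathscr{O}(\hbar^\infty)\|\psi\|$ for every low-lying eigenfunction of either operator; the span of the first $N$ eigenfunctions of one is therefore a space of $\mathscr{O}(\hbar^\infty)$-quasimodes for the other, and since both operators have compact resolvent the usual min-max comparison applied in both directions yields $\lambda_n(\mathscr{N}^c_\hbar)=\lambda_n(\mathscr{N}_\hbar)+\mathscr{O}(\hbar^\infty)$ for each fixed $n$. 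I expect the delicate point to be the elliptic parametrix of the bootstrap step: before the truncations the symbol $n_\hbar$ still involves the unbounded variables $\rho$, $\mu\sigma$ and the unbounded operator-valued $t$-variable, so one must either work in a weighted symbol class adapted to the de Gennes operator or — as in this paper — first use the rough localization to reduce to a compact set in $(\rho,\mu\sigma)$, which is precisely why the cutoffs $\chi_{h,j}$ and the confining potential $V_h$ were built into $\mathscr{N}_\hbar$.
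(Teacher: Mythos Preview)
The paper states Proposition~\ref{prop.micro} without proof; it is simply asserted at the end of Section~\ref{sec.roughmicro}, so there is no argument in the text to compare yours against. Your sketch supplies precisely the natural proof and is fully in line with the paper's methodology --- indeed the later, finer localisation results (Proposition~\ref{prop.rhop} and Lemma~\ref{lem.microhsigma}) follow the same template of a rough energy bound followed by an iterated commutator/parametrix bootstrap.

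Two technical points deserve a sentence each in a written-out version. First, the step ``$\mathscr{N}_\hbar\geq\mathrm{Op}^{\mathrm{W}}_\hbar(m)+\Theta_0-C\hbar$'' rests on a sharp G{\aa}rding inequality for \emph{operator-valued} symbols (the $t$-fibre carries the de~Gennes operator), together with the relative $n_0$-boundedness of the $\hbar^2 n_2+\cdots$ tail; both hold here since $n_2$ is at most linear in $(\rho,p,t)$ against a quadratic $n_0$ and the cutoffs $\chi_{h,j}$ control the space variables, but this should be said explicitly. Second, and this is the point you correctly isolate at the end, the parametrix composition $\mathscr{E}(\mathscr{N}_\hbar-\lambda)$ must be justified despite the quadratic growth of $n_\hbar$ in $(\rho,\mu\sigma)$: choosing the symbol of $\mathscr{E}$ compactly supported in those variables keeps the Moyal product in a tempered class, and your rough first step is exactly what licenses that truncation. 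With those two remarks made precise, the argument is complete.
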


\section{Parametrix constructions and spectral consequences}\label{sec.parametrix}

Thanks to our preparation results, we are now in position to analyze the eigenvalue problem.
We want now to identify the eigenvalues of $\mathscr{N}_\hbar^c$, and for this we shall
build an approximate inverse for an augmented inversible matrix of operator-valued operators.
This will be done in several steps, with two successive Grushin reductions, and will lead us to complete the proof of  Theorem \ref{thm.main}.

\subsection{Approximate parametrix}
We first apply the Grushin procedure to the operator-valued symbol $n_\hbar^c$.
We consider $z\in\mathbb{R}$ such that $|z-\Theta_0|\leq C\hbar^2$. We let
\begin{equation}\label{eq.Grushinmatrix}
\mathscr{P}_\hbar=
\begin{pmatrix}
n^c_\hbar-z&\cdot u_{r,s,\rho,\sigma}\\
\langle\cdot,u_{r,s,\rho,\sigma}\rangle&0
\end{pmatrix}
\,,
\end{equation}
where $u_{r,s,\rho,\sigma}=u^{\mathrm{dG}}_{\xi_0+\tilde\rho\sin\phi-\tilde p\cos\phi}$ is the first normalized positive eigenfunction of operator-valued symbol $n^c_0$ defined in Section \ref{sec.roughmicro}. Recall that $n^c_0$ is an operator in variable $t$ for which the phase space variables  $(r,s,\rho,\sigma)$ are considered as parameters. With these notations $\mathscr{P}_\hbar$ is an unbounded operator on $L^2(\R^+_t) \times \R$. We also consider its principal symbol
\[\mathscr{P}_0=
\begin{pmatrix}
n^c_0-z&\cdot u_{r,s,\rho,\sigma}\\
\langle\cdot,u_{r,s,\rho,\sigma}\rangle&0
\end{pmatrix}=\begin{pmatrix}
p_0&p_0^+\\
p^-_0&0
\end{pmatrix}
\,,\]
which is invertible (from its domain to $L^2(\R^+_t) \times \R$) uniformly with parameters thanks to the preceding section. Its inverse is the bounded operator given by
\[\mathscr{Q}_0
=\begin{pmatrix}
(n^c_0-z)^{-1}\Pi^\perp&\cdot u_{r,s,\rho,\sigma}\\
\langle\cdot,u_{r,s,\rho,\sigma}\rangle&z-\mu_1(r,s,\rho,\sigma)
\end{pmatrix}=\begin{pmatrix}
q_0&q_0^+\\
q^-_0&q_{0}^\pm
\end{pmatrix}
\,,\]
where
\begin{equation}\label{eq.mu1}
\mu_1(r,s,\rho,\sigma)=\mu_1^{\mathrm{dG}}(\xi_0+\tilde\rho\sin\phi-\tilde p\cos\phi)+(\tilde\rho\cos\phi+ \tilde p\sin\phi)^2+V_h\,,
\end{equation}
and $\Pi^\perp$ is the orthogonal projection on $\mathrm{span}(u_{r,s,\rho,\sigma})^\perp$.
By construction, we have
\[\mathscr{Q}^\mathrm{W}_0\mathscr{P}^\mathrm{W}_\hbar=\mathrm{Id}+\mathscr{O}(\hbar)\,.\]
We can then find  $\sep{ \mathscr{Q}_j}_{j=1, \cdots, 5}$ such that, at a formal level, we have
\begin{equation}\label{eq.parametrix1}
(\mathscr{Q}_0+\hbar\mathscr{Q}_1+\hbar^2\mathscr{Q}_2+\hbar^3\mathscr{Q}_3
+\hbar^4\mathscr{Q}_4)^\mathrm{W}\mathscr{P}^\mathrm{W}_\hbar=\mathrm{Id}+\mathscr{O}(\hbar^5)\,
\end{equation}
where the error term is possibly an unbounded operator in variable $t$ in this first formal approach.
We recall that the Moyal product is given in our setting by
\[\mathscr{Q}^{\mathrm{W}}_\hbar\mathscr{P}_\hbar^{\mathrm{W}}=(\mathscr{Q}_\hbar e^{\frac{\hbar}{2i}\square}\mathscr{P}_\hbar)^{\mathrm{W}}\,,\quad
 \square=\overleftarrow{\partial}_{(\rho,\sigma)} \overrightarrow{\partial}_{(r,s)}
-\overleftarrow{\partial}_{(r,s)}\overrightarrow{\partial}_{(\rho,\sigma)}\,.\]
Let us explain how to find $\mathscr{Q}_1$ and $\mathscr{Q}_2$. By using the Moyal product and a formal expansion in powers of $\hbar$, $\mathscr{Q}_1$ and $\mathscr{Q}_2$ must satisfy the following relations:
 \[\mathscr{Q}_1\mathscr{P}_0+\mathscr{Q}_0\mathscr{P}_1+\frac{1}{2i}\{\mathscr{Q}_0,\mathscr{P}_0\}=0\,,\]
 \[\mathscr{Q}_2\mathscr{P}_0+\mathscr{Q}_1\mathscr{P}_1+\mathscr{Q}_0\mathscr{P}_2
 +\frac{1}{2i}\left(\{\mathscr{Q}_1,\mathscr{P}_0\}+\{\mathscr{Q}_0,\mathscr{P}_1\}\right)
 -\frac18\mathscr{Q}_0\square^2\mathscr{P}_0=0\,,\]
where
 \[A\square^2B=\partial^2_{(\rho,\sigma)} A\partial^2_{(r,s)}B-2\partial^2_{{(r,s)},{(\rho,\sigma)}}A\partial^2_{{(r,s)},{(\rho,\sigma)}}B
 +\partial^2_{(r,s)}A\partial^2_{(\rho,\sigma)} B\,.\]
We get
\[\mathscr{Q}_1=-\mathscr{Q}_0\mathscr{P}_1\mathscr{Q}_0-\frac{1}{2i}\{\mathscr{Q}_0,\mathscr{P}_0\}\mathscr{Q}_0\,,\]
and
\[\mathscr{Q}_2=-\mathscr{Q}_1\mathscr{P}_1\mathscr{Q}_0-\mathscr{Q}_0\mathscr{P}_2\mathscr{Q}_0-\frac{1}{2i}\left(\{\mathscr{Q}_1,\mathscr{P}_0\}+\{\mathscr{Q}_0,\mathscr{P}_1\}\right)\mathscr{Q}_0+\frac18\left(\mathscr{Q}_0\square^2\mathscr{P}_0\right)\mathscr{Q}_0\,.\]
We will use the following notation
\[\mathscr{Q}=\begin{pmatrix}
q&q^+\\
q^-&q^\pm
\end{pmatrix}\,\]
and we shall sometimes denote the bottom right component $q^\pm$ by $\mathscr{Q}_\pm$.
Since the $\mathscr{P}_j$ are self-adjoint, so are the $\mathscr{Q}_j$. In particular,
\[\left(\{\mathscr{Q}_0,\mathscr{P}_0\}\mathscr{Q}_0\right)_\pm=0\,.\]
Since $n_1=0$, we have $\mathscr{P}_1=0$. It follows that
\[q^\pm_1=0\,.\]
Let us notice that $\mathscr{P}_0$ and $\mathscr{Q}_0$ do not Poisson-commute (in the sense that $\mathscr{Q}_0\square^k\mathscr{P}_0=0$ for all $k\geq 1$). However, if $\mu$ and  $V_h$ were replaced by $0$, this Poisson-commutation holds. Keeping this remark in mind and using similar considerations as above, we can write
\[\begin{split}
q_2^\pm&=(\mathscr{Q}_0\mathscr{P}_1\mathscr{Q}_0\mathscr{P}_1\mathscr{Q}_0)_\pm-(\mathscr{Q}_0\mathscr{P}_2\mathscr{Q}_0)_\pm+R_\hbar+\mu r_\hbar+\mathscr{O}(\mu^2)\\
&=-(\mathscr{Q}_0\mathscr{P}_2\mathscr{Q}_0)_\pm+R_\hbar+\mu r_\hbar+\mathscr{O}(\mu^2)\\
&=-\left\langle (rL_\hbar+\tilde n_2) u_{r,s,\rho,\sigma},u_{r,s,\rho,\sigma}\right\rangle+R_\hbar+\mu r_\hbar+\mathscr{O}(\mu^2)\,,
\end{split}\]
where we used \eqref{eq.n2n2tilde} and with $R_\hbar$ supported in $\{|r|\geq h^{-\eta/2}\}$ ($R_\hbar$ comes from the presence of $V_\hbar$). 

Then, with Lemma \ref{lem.annulr}, and the exponential decay of $u_{r,s,\rho,\sigma}$ with respect to $t$, we get
\begin{equation*}
q_2^\pm=-\langle \chi_{h,1}(rL_{1}(\xi_0-t)+\tilde n^0_2)u_{r,s,\rho,\sigma},u_{r,s,\rho,\sigma}\rangle+\tilde R_\hbar+\mu r_\hbar+\mathscr{O}(\mu^2)+\mathscr{O}(\hbar^\infty)\,,
\end{equation*}
where
\[L_{1}=-2\phi'\cos^2\phi+2\xi_0u_2\cos\phi-2u_1\xi_0\sin\phi\,,\]
and
\[\tilde n^0_2=-2rt\tilde pu_2+2\tilde p r\phi'\cos\phi+r\partial_r(\alpha^{-1})\left(\tilde p^2-2\tilde p(\xi_0-t)\cos\phi \right)-2u_1 rt\tilde\rho\,.\]
The terms $\mu r_\hbar$ and $\mathscr{O}(\mu^2)$ are in fact phantom terms since $\mu=\hbar^2$: they play at most at the order $\hbar^4$. In what follows, we implicitly include them in the lower order terms.

\begin{proposition}\label{prop.qpm}
Defining $q^\pm_\hbar(z)=(\mathscr{Q}_0+\hbar\mathscr{Q}_1+\hbar^2\mathscr{Q}_2
+\hbar^3\mathscr{Q}_3+\hbar^4\mathscr{Q}_4)_\pm$ and recalling $|z-\Theta_0|\leq C\hbar^2$, we have
\begin{multline*}
q^\pm_\hbar(z)=z-\mu_1(r,s,\rho,\sigma)-\hbar^2\langle \chi_{h,1}(rL_1(\xi_0-t)+\tilde n_2^0) u_{r,s,\rho,\sigma},u_{r,s,\rho,\sigma}\rangle\\
+\hbar^3q_{3,z}^\pm(r,s,\rho,\mu\sigma)+\hbar^4 q_{4,z}^\pm(r,s,\rho,\mu\sigma)+R_\hbar+\mathscr{O}(\hbar^\infty)\,,
\end{multline*}
where the $q_{j,z}^\pm$ belong to $S(1)$ and are analytic with respect to $z$.
\end{proposition}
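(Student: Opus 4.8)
The plan is to read off the bottom-right entry $q_\hbar^\pm(z)$ of the formal parametrix $\sum_{j=0}^4\hbar^j\mathscr{Q}_j$ term by term, using the relations for the $\mathscr{Q}_j$ already produced by the Moyal expansion, and then to check that the leftover pieces $q_{3,z}^\pm,q_{4,z}^\pm$ have the stated regularity. First I would record the low orders. By the very definition of $\mathscr{Q}_0$, its bottom-right entry is $q_0^\pm=z-\mu_1(r,s,\rho,\sigma)$, which is the first two terms of the statement. Since $n_1=0$ we have $\mathscr{P}_1=0$, hence $\mathscr{Q}_1=-\tfrac{1}{2i}\{\mathscr{Q}_0,\mathscr{P}_0\}\mathscr{Q}_0$, and because $(\{\mathscr{Q}_0,\mathscr{P}_0\}\mathscr{Q}_0)_\pm=0$ we get $q_1^\pm=0$. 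The $\hbar^2$-coefficient is exactly the quantity $q_2^\pm$ computed just above the statement: using $\mathscr{P}_1=0$, the fact that $\mathscr{P}_0$ and $\mathscr{Q}_0$ Poisson-commute up to $\mathscr{O}(\mu)+\mathscr{O}(V_h)$, the decomposition $n_2=rL_\hbar+\tilde n_2$ of \eqref{eq.n2n2tilde}, the cancellation of Lemma \ref{lem.annulr}, and the exponential decay in $t$ of $u_{r,s,\rho,\sigma}$, one finds
\[
q_2^\pm=-\langle \chi_{h,1}(rL_1(\xi_0-t)+\tilde n_2^0)u_{r,s,\rho,\sigma},u_{r,s,\rho,\sigma}\rangle+\tilde R_\hbar+\mu r_\hbar+\mathscr{O}(\mu^2)+\mathscr{O}(\hbar^\infty);
\]
restoring $\mu=\hbar^2$, the terms $\mu r_\hbar$ and $\mathscr{O}(\mu^2)$ are $S(1)$-symbols of size $\mathscr{O}(\hbar^2)$ that, once multiplied by $\hbar^2$, feed into $\hbar^4 q_{4,z}^\pm$ (and lower orders), while $\hbar^2\tilde R_\hbar$ is absorbed into $R_\hbar$; this yields the $\hbar^2$-term of the statement.

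Next, for the $\hbar^3$- and $\hbar^4$-terms I would solve the following two transport equations in the same way: for $k=3,4$, the vanishing of the $\hbar^k$-coefficient of $(\sum_j\hbar^j\mathscr{Q}_j)^{\mathrm{W}}\mathscr{P}_\hbar^{\mathrm{W}}-\mathrm{Id}$ reads $\mathscr{Q}_k\mathscr{P}_0=-\mathscr{R}_k$, where $\mathscr{R}_k$ is a universal finite combination of $\mathscr{Q}_0,\dots,\mathscr{Q}_{k-1}$, of $\mathscr{P}_0$ and $\mathscr{P}_2,\mathscr{P}_3,\mathscr{P}_4$ (recall $\mathscr{P}_1=0$), and of their phase-space derivatives. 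Hence $\mathscr{Q}_k=-\mathscr{R}_k\mathscr{Q}_0$ and $q_k^\pm=-(\mathscr{R}_k\mathscr{Q}_0)_\pm$ is an explicit finite sum of pairings $\langle B\,u_{r,s,\rho,\sigma},u_{r,s,\rho,\sigma}\rangle$, each $B$ a product of: multiplication by a polynomial in $t$ whose coefficients are smooth $2L$-periodic functions of $s$ times the cutoffs $\chi_{h,1},\chi_{h,3}$ (and possibly $V_h$ or its derivatives); powers of the resolvent $(n^c_0-z)^{-1}\Pi^\perp$; the projection $\Pi^\perp$; and phase-space derivatives of these. I would then check that $q_{3,z}^\pm,q_{4,z}^\pm\in S(1)$: the $h$-dependent cutoffs $\chi_{h,1},\chi_{h,3}$ and $V_h$ live at the scales $h^{-\eta}$, $h^{-\eta/2}$ in the rescaled variables, so each of their derivatives carries a positive power of $h$ and they are uniformly in $S(1)$; after the microlocal cutoffs $\Xi_1,\Xi_2$ of Section \ref{sec.roughmicro} the symbols $n_j$ ($j\ge 2$) depend on $(r,s,\rho,\mu\sigma)$ only through bounded quantities; the resolvent $(n^c_0-z)^{-1}\Pi^\perp$ is a bounded operator on $L^2(\mathbb{R}_+)$ depending smoothly on the parameters with all derivatives bounded, thanks to the uniform spectral gap of $n^c_0$ above its ground state (inherited from the gap $\mu_2^{\mathrm{dG}}-\mu_1^{\mathrm{dG}}>0$ of the de Gennes operator); and $u_{r,s,\rho,\sigma}$ and all its parameter-derivatives decay exponentially in $t$, absorbing the $t$-polynomial multipliers in each pairing. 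Analyticity in $z$ is automatic on the whole fixed complex disc around $\Theta_0$ where $(n^c_0-z)^{-1}\Pi^\perp$ is holomorphic — again by the uniform gap — which contains $\{|z-\Theta_0|\le C\hbar^2\}$: indeed $\mathscr{Q}_0$, each $\mathscr{Q}_k$ (a universal Moyal expression in $\mathscr{Q}_0$), and their bottom-right entries are holomorphic there. Finally, all contributions of $V_h$ or its derivatives vanish for $|r|<h^{-\eta/2}$, so they combine into a single remainder $R_\hbar$ supported in $\{|r|\ge h^{-\eta/2}\}$, and the exponentially small tails in $t$ are collected in $\mathscr{O}(\hbar^\infty)$.

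The main obstacle is the uniform-in-$\hbar$ symbol bookkeeping of the last step: one must verify that composing the $t$-unbounded multipliers coming from the $n_j$ with the resolvent $(n^c_0-z)^{-1}\Pi^\perp$ and pairing against the de Gennes ground state really produces scalar symbols all of whose phase-space derivatives are bounded uniformly in $\hbar$, $\mu$ and $z$, and that the $h$-dependent cutoffs neither degrade these bounds nor generate large factors — this is precisely where the exponential decay of $u_{r,s,\rho,\sigma}$ and the uniform spectral gap of $n^c_0$ are indispensable. By contrast, the genuinely algebraic input — the vanishing of the coefficient of $r(\xi_0-t)^2$, Lemma \ref{lem.annulr} — has already been extracted, and it is what makes the displayed $\hbar^2$-term take the clean stated form.
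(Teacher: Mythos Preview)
Your proposal is correct and follows essentially the same approach as the paper: the proposition is not given a separate proof there, and its content is precisely the computations of $q_0^\pm,q_1^\pm,q_2^\pm$ displayed just before the statement, together with the implicit definition of $q_{3,z}^\pm,q_{4,z}^\pm$ via the Moyal recursion $\mathscr{Q}_k=-\mathscr{R}_k\mathscr{Q}_0$. Your additional explanations of why $q_{3,z}^\pm,q_{4,z}^\pm\in S(1)$ (uniform spectral gap, exponential decay of $u_{r,s,\rho,\sigma}$, harmlessness of the $h$-dependent cutoffs) and of analyticity in $z$ make explicit what the paper leaves to the reader.
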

Actually, since we are interested in the $z$ of the form
\[z=\Theta_0+\mathscr{O}(\hbar^2)\,,\]
this leads to the following effective symbol where we have fixed $z= \Theta_0$ in the ${\mathscr O}(\hbar^3)$ terms and removed the ${\mathscr O}(\hbar^\infty)$ and $R_\hbar$ terms,
\begin{equation}\label{eq.aeff}
a_{\hbar}^{\mathrm{eff}}(r,s,\rho,\sigma)=\mu_1(r,s,\rho,\sigma)+\hbar^2\langle \chi_{h,1}(rL_1(\xi_0-t)+\tilde n_2^0) u_{r,s,\rho,\sigma},u_{r,s,\rho,\sigma}\rangle-\hbar^3q_{3,\Theta_0}^\pm-\hbar^4 q_{4,\Theta_0}^\pm\,,
\end{equation}
and where we recall \eqref{eq.mu1}.

\subsection{Spectral consequences}
In this section, we explain how the symbols $q^{\pm}_\hbar(z)$ and $a_{\hbar}^{\mathrm{eff}}$ are related to the spectrum of $\mathscr{N}_\hbar^c$.
\begin{proposition}\label{prop.specreducaeff}
For all $n\geq 1$,
\[\lambda_n\left(\mathscr{N}_\hbar^c\right)=\lambda_n\left(\mathrm{Op}^W_\hbar a_{\hbar}^{\mathrm{eff}}\right)+\mathscr{O}(\hbar^5)\,.\]
\end{proposition}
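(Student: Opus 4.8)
The plan is to turn the formal parametrix identity \eqref{eq.parametrix1} into a genuine comparison of spectra via the Grushin (Schur complement) formalism, following the strategy of \cite{BHR21}, \cite{M07}, \cite{Keraval}. First I would observe that, because of the microlocalization in Proposition \ref{prop.micro} and the bounded-symbol truncations from Section \ref{sec.roughmicro}, the operator-valued symbol $n_\hbar^c$ and all the ingredients of the Grushin matrix $\mathscr{P}_\hbar$ in \eqref{eq.Grushinmatrix} lie in reasonable symbol classes (uniformly bounded in $t$ after the exponential decay of $u_{r,s,\rho,\sigma}$ is used), so that the Weyl quantizations $\mathscr{P}_\hbar^{\mathrm W}$ and $(\mathscr{Q}_0+\hbar\mathscr{Q}_1+\cdots+\hbar^4\mathscr{Q}_4)^{\mathrm W}$ are bona fide bounded (resp. closed) operators and the Moyal expansions are valid with controlled remainders in $\hbar$. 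The principal symbol $\mathscr{P}_0$ being uniformly invertible with inverse $\mathscr{Q}_0$ (by the spectral gap of the de Gennes operator recalled below \eqref{eq:bigthetazero}), standard symbolic calculus upgrades \eqref{eq.parametrix1} to $(\mathscr{Q}_0+\cdots+\hbar^4\mathscr{Q}_4)^{\mathrm W}\mathscr{P}_\hbar^{\mathrm W}=\mathrm{Id}+\mathscr{O}(\hbar^5)$ as bounded operators on $L^2(\mathbb R_t^+\times\mathbb R)\otimes L^2(\mathbb R_{r}\times(\mathbb R/2L\mathbb Z)_s)$, and symmetrically on the other side.

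Next I would invoke the classical equivalence at the heart of the Grushin method: for $z$ with $|z-\Theta_0|\leq C\hbar^2$, the value $z$ is in the spectrum of $\mathscr{N}_\hbar^c$ if and only if $0$ is in the spectrum of the effective operator $\mathrm{Op}^W_\hbar q^\pm_\hbar(z)$ (the "Schur complement" computed in Proposition \ref{prop.qpm}), the error being $\mathscr{O}(\hbar^\infty)$ in the relevant window because of Proposition \ref{prop.micro}. More precisely, the bottom-right entry of a true right/left inverse of $\mathscr{P}_\hbar^{\mathrm W}$ is $\mathscr{O}(\hbar^5)$-close to $\mathrm{Op}^W_\hbar q^\pm_\hbar(z)$, and the $R_\hbar$ terms supported in $\{|r|\geq h^{-\eta/2}\}$ act trivially on the microlocalized eigenfunctions. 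Then, since $q^\pm_\hbar(z)$ is affine-analytic in $z$ with leading $z$-dependence $z-\mu_1+\mathscr{O}(\hbar^2)$ (so $\partial_z q^\pm_\hbar=1+\mathscr{O}(\hbar^2)$, uniformly elliptic in $z$), I would solve the implicit equation: the condition "$z\in\mathrm{sp}(\mathscr{N}_\hbar^c)$" becomes "$z\in\mathrm{sp}(\mathrm{Op}^W_\hbar\,\mathcal E_\hbar(z))$" where $\mathcal E_\hbar(z)=z-q^\pm_\hbar(z)+\mathscr O(\hbar^5)$ is a fixed-point reformulation, and fixing $z=\Theta_0$ inside the $\mathscr O(\hbar^3)$ and higher terms (the error this substitution creates is $\mathscr O(\hbar^2)\cdot\mathscr O(\hbar^3)=\mathscr O(\hbar^5)$ since the eigenvalues differ from $\Theta_0$ by $\mathscr O(\hbar^2)$) produces exactly the symbol $a_\hbar^{\mathrm{eff}}$ of \eqref{eq.aeff}. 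A spectral-stability argument (e.g. comparison of resolvents, or the standard "spectrum is $\mathscr O(\hbar^5)$-close" lemma for self-adjoint operators differing by an $\mathscr O(\hbar^5)$-bounded perturbation) then gives $\lambda_n(\mathscr{N}_\hbar^c)=\lambda_n(\mathrm{Op}^W_\hbar a_\hbar^{\mathrm{eff}})+\mathscr O(\hbar^5)$ for each fixed $n$, using that both operators are bounded below and have compact resolvent near $\Theta_0$ so the min-max characterization of the low-lying eigenvalues applies.

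The main obstacle I expect is \emph{not} the algebra of the Grushin reduction (that is essentially \eqref{eq.parametrix1} and Proposition \ref{prop.qpm}, already in hand) but the \emph{functional-analytic bookkeeping}: justifying that all the remainders in the Moyal expansion and in the passage from formal to true parametrix are genuinely $\mathscr O(\hbar^5)$ \emph{as bounded operators} on the right Hilbert space, despite $\mathscr{N}_\hbar^c$ being only a pseudodifferential operator in $(r,s)$ with an operator-valued symbol that is unbounded in $t$. This is exactly why Sections \ref{sec.roughmicro} were set up — the truncations $\Xi_1,\Xi_2$ and cutoffs $\chi_{h,j}$ place everything in $S(1)$-type classes — and why Proposition \ref{prop.micro} is needed to discard the truncation errors. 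Carrying this out cleanly requires the Calderón–Vaillancourt-type boundedness, the composition theorem with operator-valued symbols in the spirit of \cite{Keraval}, and a careful check that the exponential $t$-decay of $u_{r,s,\rho,\sigma}$ (uniform in the parameters, from analytic perturbation of the de Gennes operator near its non-degenerate minimum) makes the $t$-unbounded pieces harmless. Once that is in place, the reduction to $\mathrm{Op}^W_\hbar a_\hbar^{\mathrm{eff}}$ and the $\mathscr O(\hbar^5)$ spectral comparison are routine.
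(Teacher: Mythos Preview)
Your plan is essentially the same Grushin--Schur strategy the paper uses, and the algebraic skeleton (parametrix identity, read off the bottom-right corner, fix $z=\Theta_0$ in the $\mathscr{O}(\hbar^3)$ terms, then compare spectra in both directions by min-max/dimension counting) matches the paper's proof.

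Two points where the paper is more careful than your sketch, and where you should adjust. First, the remainder in \eqref{eq.parametrix1} is \emph{not} bounded on the plain $L^2$ space: the paper states it only as $\mathscr{O}_{L^2(\langle t\rangle^N\dd t)\to L^2(\dd t)}(\hbar^5)$, because the Moyal remainders inherit polynomial growth in $t$ from the unbounded symbol $n^c_\hbar$. The fix is not the exponential decay of $u_{r,s,\rho,\sigma}$ (that decay helps the \emph{symbol class} of $p_0^\pm,q_0^\pm$), but the Agmon decay in $t$ of the \emph{eigenfunctions} $\psi$ of $\mathscr{N}^c_\hbar$ themselves (inherited through Proposition \ref{prop.checkLm} from Proposition \ref{prop.loct}), which turns $\hbar^5\|\langle t\rangle^N\psi\|$ into $C\hbar^5\|\psi\|$. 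Second, the term $\mathrm{Op}^{\mathrm W}_\hbar(R_\hbar)\Pi\psi$ is killed not by Proposition \ref{prop.micro} but by the $r$-Agmon localization (Proposition \ref{prop.optimal-r}, in rescaled form), since $R_\hbar$ is supported where $|r|\geq h^{-\eta/2}$. With these two substitutions your argument goes through exactly as in the paper.
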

\begin{proof}
With the choices of $\mathscr{Q}_j$, we get, for some integer $N$,
\[(\mathscr{Q}_0+\hbar\mathscr{Q}_1+\hbar^2\mathscr{Q}_2+\hbar^3\mathscr{Q}_3+\hbar^4\mathscr{Q}_4)^\mathrm{W}\mathscr{P}^\mathrm{W}_\hbar=\mathrm{Id}+\mathscr{O}_{L^2(\langle t\rangle^{N}\dd t  \times \C )\to L^2(\dd t)  \times \C }(\hbar^5)\,.\]		
We write
\[\mathrm{Op}^{\mathrm{W}}_\hbar\left(\mathscr{Q}_0+\hbar\mathscr{Q}_1+\hbar^2\mathscr{Q}_2+\hbar^3\mathscr{Q}_3+\hbar^4\mathscr{Q}_4\right)=\begin{pmatrix}
Q_\hbar&Q_\hbar^+\\
Q^-_\hbar&Q_\hbar^\pm
\end{pmatrix}\,,\quad \Pi=\mathrm{Op}^{\mathrm{W}}_\hbar(\langle\cdot,u_{r,s,\rho,\sigma}\rangle)\,.\]
Thus,
\[Q_\hbar(\mathscr{N}^c_\hbar-z)+Q_\hbar^+\Pi=\mathrm{Id}+\mathscr{O}_{ L^2(\langle t\rangle^{N}\dd t)\to L^2(\dd t) }(\hbar^5)\,,\quad Q^-_\hbar(\mathscr{N}^c_\hbar-z)+Q_\hbar^\pm\Pi=\mathscr{O}_{L^2(\langle t\rangle^{N}\dd t)\to \C }(\hbar^5)\,.\]
By means of the Calder\'on-Vaillancourt theorem with respect to the variables $(r,s)$, it follows that
\begin{equation}\label{eq.grushinrough0}
\|\psi\|\leq C\|\Pi\psi\|+C\|(\mathscr{N}^c_\hbar-z)\psi\|+C\hbar^5\|\langle t\rangle^N \psi\|\,,
\end{equation}
and
\[\|Q_\hbar^\pm(\Pi\psi)\|\leq C\|(\mathscr{N}^c_\hbar-z)\psi\|+C\hbar^5\|\langle t\rangle^N\psi\|\,.\]
We recall that $Q_\hbar^\pm=\mathrm{Op}^{\mathrm{W}}_\hbar(q_\hbar^\pm)$ is described in Proposition \ref{prop.qpm}. Since we are considering $z\in\mathbb{R}$ such that $|z-\Theta_0|\leq C\hbar^2$, we get from Proposition \ref{prop.qpm} again that
\begin{equation}\label{eq.grushinrough}
\|(z-\mathrm{Op}^W_\hbar a_{\hbar}^{\mathrm{eff}})\Pi\psi\|\leq C\|(\mathscr{N}^c_\hbar-z)\psi\|+C\hbar^5\|\langle t\rangle^N\psi\|+\|\mathrm{Op}_\hbar^{\mathrm{W}}(R_\hbar)\Pi\psi\|\,.
\end{equation}
Let us now consider \eqref{eq.grushinrough0} and \eqref{eq.grushinrough}. Applying \eqref{eq.grushinrough} to an eigenpair $(\lambda,\psi)$ with $\lambda=\Theta_0+\mathscr{O}(\hbar^2)$, we get
\[\|(\lambda-\mathrm{Op}^W_\hbar a_{\hbar}^{\mathrm{eff}})\Pi\psi\|\leq C\hbar^5\|\langle t\rangle^N\psi\|+\|\mathrm{Op}_\hbar^{\mathrm{W}}(R_\hbar)\Pi\psi\|\,.\]
From Proposition \ref{prop.checkLm}, the eigenfunctions of $\mathscr{N}_\hbar^c$ still satisfy Agmon estimates with respect to $t$ and $r$. This implies that
\[\|(\lambda-\mathrm{Op}^W_\hbar a_{\hbar}^{\mathrm{eff}})\Pi\psi\|\leq C\hbar^5\|\psi\|\,,\]
and, with \eqref{eq.grushinrough0}, we find
\[\|(\lambda-\mathrm{Op}^W_\hbar a_{\hbar}^{\mathrm{eff}})\Pi\psi\|\leq C\hbar^5\|\Pi\psi\|\,.\]
Since $\Pi\psi\neq 0$, the spectral theorem tells us that $\mathrm{dist}\left(\lambda, {\rm Sp} \left( \mathrm{Op}^W_\hbar a_{\hbar}^{\mathrm{eff}}\right)\right)\leq C\hbar^5$.
Let us consider
\[E_n(\hbar)=\ker(\mathscr{N}_\hbar^c-\lambda_n(\hbar))\,,\quad \lambda_n(\hbar)=\lambda_n\left(\mathscr{N}_\hbar^c\right)\,.\]
We have, for all $\psi\in E_n(\hbar)$,
\[\|\psi\|\leq C\|\Pi\psi\|\,.\]
In particular, $\dim\mathrm{ran}\,\Pi_{|E_n(\hbar)}=\dim E_n(\hbar)$. Since, for all $\psi\in E_n(\hbar)$,
\[\|(\lambda_n(\hbar)-\mathrm{Op}^W_\hbar a_{\hbar}^{\mathrm{eff}})\Pi\psi\|\leq C\hbar^5\|\Pi\psi\|\,,\]
the spectral theorem implies that there are at least $\dim E_n(\hbar)$ eigenvalues of $\mathrm{Op}^W_\hbar a_{\hbar}^{\mathrm{eff}}$ in the disc $D(\lambda_n(\hbar),C\hbar^5)$.

\bigskip
Conversely, we can check that $\lambda_n^{\mathrm{eff}}(\hbar):=\lambda_n\left(\mathrm{Op}^W_\hbar a_{\hbar}^{\mathrm{eff}}\right)=\Theta_0+\mathscr{O}(\hbar^2)$. We also observe that
\[\mathscr{P}^\mathrm{W}_\hbar(\mathscr{Q}_0+\hbar\mathscr{Q}_1+\hbar^2\mathscr{Q}_2+\hbar^3\mathscr{Q}_3+\hbar^4\mathscr{Q}_4)^\mathrm{W}=\mathrm{Id}+\mathscr{O}(\hbar^5)\,.\]	
This gives	
\[(\mathscr{N}_\hbar^c-z)Q^+_\hbar+\Pi Q_\hbar^\pm=\mathscr{O}(\hbar^5)\,,\quad \Pi^*Q_\hbar^+=\mathrm{Id}+\mathscr{O}(\hbar^5)\,.\]
We get
\[\|(\mathscr{N}_\hbar^c-z)Q^+_\hbar\psi\|\leq C\|Q_\hbar^\pm\psi\|+C\hbar^5\|\psi\|\,,\]
and then from Proposition \ref{prop.qpm}
\[\|(\mathscr{N}_\hbar^c-z)Q^+_\hbar\psi\|\leq C\|(z-\mathrm{Op}^W_\hbar a_{\hbar}^{\mathrm{eff}})\psi\|+C\hbar^5\|\psi\|\,.\]
We also have that
\[\|\psi\|\leq C\|Q_\hbar^+\psi\|\,,\]
which shows that $Q_\hbar^+$ is injective. Using again the spectral theorem, we see that there are at least $\dim \ker(\mathscr{N}_\hbar^c-\lambda^{\mathrm{eff}}_n(\hbar))$ eigenvalues of $\mathscr{N}_\hbar^c$ in the disc $D(\lambda^{\mathrm{eff}}_n(\hbar),C\hbar^5)$. The proof of Proposition \ref{prop.specreducaeff} is complete.
\end{proof}

\subsection{Spectral analysis of $\mathrm{Op}_\hbar^Wa_\hbar^{\mathrm{eff}}$}
This section is devoted to the description of the low-lying spectrum of $\mathrm{Op}_\hbar^Wa_\hbar^{\mathrm{eff}}$.
The principal symbol is
\[\mu_1(r,s,\rho,\sigma)=\mu^{\mathrm{dG}}(\xi_0+\tilde\rho\sin\phi-\tilde p\cos\phi)+(\tilde\rho\cos\phi+ \tilde p\sin\phi)^2+V_h(r)\,.\]
This implies that the eigenfunctions are microlocalized near $\tilde\rho=\tilde p=0$, where we recall that
\[\tilde\rho=\Xi_1(\rho)\,,\quad \tilde p=\Xi_2(\mu\sigma)-\mu^{\frac12}\beta(s)\chi_{h,1}\frac{r^2}{2}\,.\]

\begin{proposition}\label{prop.rhop}
Let $C>0$. For $\eta\in\left(0,\frac13\right)$, the following holds. Let us consider a smooth function $\zeta$ equalling $0$ near $0$ and $1$ away from a compact neighborhood of $0$. We consider  \[\zeta_{1,\hbar}(r,s,\rho,\sigma)=\zeta(\hbar^{-\eta}\tilde\rho)\,,\quad \zeta_{2,\hbar}(r,s,\rho,\sigma)=\zeta(\hbar^{-\eta}\tilde p)\,.\] 
There exists $\hbar_0>0$ such that, for all $\hbar\in(0,\hbar_0)$ and all normalized eigenfunctions $\psi$ of $\mathrm{Op}^W_\hbar a_{\hbar}^{\mathrm{eff}}$ associated with an eigenvalue $\lambda$ satisfying $\lambda\leq\Theta_0+C\hbar^2$, we have
\[\zeta_{j,\hbar}^W \psi=\mathscr{O}(\hbar^\infty)\|\psi\|\,.\]	
\end{proposition}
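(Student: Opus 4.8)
The plan is to establish a microlocal elliptic estimate: near the eigenfunctions, the symbol $a_\hbar^{\mathrm{eff}}-\lambda$ is elliptic (bounded below) precisely on the region $\{\tilde\rho^2+\tilde p^2\gtrsim\hbar^{2\eta}\}$, and then to convert this into $\hbar^\infty$-smallness of $\zeta_{j,\hbar}^W\psi$ by the standard iterative bootstrap argument. The starting point is that the principal symbol $\mu_1(r,s,\rho,\sigma)=\mu_1^{\mathrm{dG}}(\xi_0+\tilde\rho\sin\phi-\tilde p\cos\phi)+(\tilde\rho\cos\phi+\tilde p\sin\phi)^2+V_h(r)$ has, for each fixed $(r,s)$ and uniformly in the parameters, a unique non-degenerate minimum equal to $\Theta_0+V_h(r)\geq\Theta_0$ at $(\tilde\rho,\tilde p)=(0,0)$; this is exactly the property of the truncated de Gennes function arranged in Section \ref{sec.roughmicro}. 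Since the map $(\rho,\mu\sigma)\mapsto(\tilde\rho,\tilde p)$ is, after the rotation, essentially an isometry composed with the bounded diffeomorphisms $\Xi_1,\Xi_2$ (and a harmless $r$-dependent shift), the non-degenerate minimum gives a lower bound of the form $\mu_1(r,s,\rho,\sigma)\geq\Theta_0+c(\tilde\rho^2+\tilde p^2)$ on a fixed neighborhood of $(\tilde\rho,\tilde p)=(0,0)$, and $\mu_1\geq\Theta_0+c$ away from it (using that the minimum is not attained at infinity, and that the $\Xi_j$ saturate). The lower-order terms $\hbar^2\langle\cdots\rangle+\hbar^3 q_{3,\Theta_0}^\pm+\hbar^4 q_{4,\Theta_0}^\pm$ all belong to $\hbar^2 S(1)$, hence are $\mathscr O(\hbar^2)$ and do not destroy the ellipticity on the region where $\tilde\rho^2+\tilde p^2\geq\hbar^{2\eta}$ with $\eta<1/3$ (there $c\hbar^{2\eta}\gg\hbar^2$).

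The next step is the quantization. On the support of $\zeta_{j,\hbar}$ we have $\tilde\rho^2+\tilde p^2\gtrsim\hbar^{2\eta}$, so on that support $a_\hbar^{\mathrm{eff}}-\lambda\geq \tfrac12 c(\tilde\rho^2+\tilde p^2)\geq \tfrac{c}{2}\hbar^{2\eta}>0$ for $\hbar$ small, using $\lambda\leq\Theta_0+C\hbar^2$. One then wants to build, with the Weyl calculus in the variables $(r,s)$ at semiclassical parameter $\hbar$, a parametrix for $\mathrm{Op}_\hbar^W(a_\hbar^{\mathrm{eff}})-\lambda$ microlocally on $\mathrm{supp}\,\zeta_{j,\hbar}$. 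Concretely, I would introduce a slightly larger cutoff $\tilde\zeta_{j,\hbar}$ supported where $\tilde\rho^2+\tilde p^2\gtrsim\hbar^{2\eta}$ and equal to $1$ on $\mathrm{supp}\,\zeta_{j,\hbar}$, form $b_\hbar=\tilde\zeta_{j,\hbar}/(a_\hbar^{\mathrm{eff}}-\lambda)$, and check that $b_\hbar$ lies in a good symbol class: each $(r,s,\rho,\sigma)$-derivative of $b_\hbar$ costs at most $\hbar^{-\eta}$ (from differentiating the cutoffs and from $1/(a_\hbar^{\mathrm{eff}}-\lambda)$ being of size $\hbar^{-2\eta}$), so $b_\hbar\in S(\hbar^{-2\eta})$ in the appropriate $\hbar$-dependent class. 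The symbolic (Moyal) calculus then gives $\mathrm{Op}_\hbar^W(b_\hbar)(\mathrm{Op}_\hbar^W(a_\hbar^{\mathrm{eff}})-\lambda)=\mathrm{Op}_\hbar^W(\tilde\zeta_{j,\hbar})+\mathscr O_{L^2\to L^2}(\hbar^{1-3\eta})$ (the remainder gaining one power of $\hbar$ per order of the expansion, each accompanied by at most $\hbar^{-3\eta}$); since $3\eta<1$ this is a genuine gain. Composing on the right with $\psi$ and using $(\mathrm{Op}_\hbar^W(a_\hbar^{\mathrm{eff}})-\lambda)\psi=0$, and $\zeta_{j,\hbar}=\zeta_{j,\hbar}\tilde\zeta_{j,\hbar}$, yields $\|\zeta_{j,\hbar}^W\psi\|\leq \hbar\|\psi\|$ modulo controlled errors, which we can feed back into the estimate to gain $\hbar$ at each round.

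The genuine bootstrap then works as follows: once $\|\zeta_{j,\hbar}^W\psi\|=\mathscr O(\hbar^{1-3\eta})\|\psi\|$ is known, replace $\zeta$ by a nested family $\zeta^{(0)}\succ\zeta^{(1)}\succ\cdots$ of cutoffs of the same type with $\zeta^{(k+1)}=1$ on $\mathrm{supp}\,\zeta^{(k)}$, and iterate the elliptic estimate; at step $k$ one writes $\zeta^{(k)}_\hbar{}^W\psi=\mathrm{Op}_\hbar^W(b_\hbar^{(k)})(\mathrm{Op}_\hbar^W(a_\hbar^{\mathrm{eff}})-\lambda)\zeta^{(k+1)}_\hbar{}^W\psi+(\text{remainder})$, where the remainder now contains $\zeta^{(k+1)}_\hbar{}^W\psi$ already known to be $\mathscr O(\hbar^{N})$, plus commutator terms that are $\mathscr O(\hbar^{N+1-3\eta})$ because $[\mathrm{Op}_\hbar^W(a_\hbar^{\mathrm{eff}}),\zeta^{(k+1)}_\hbar{}^W]$ is $\mathscr O(\hbar^{1-2\eta})$ and is supported where the previous estimate applies. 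This gives $\|\zeta^{(k)}_\hbar{}^W\psi\|=\mathscr O(\hbar^{(k+1)(1-3\eta)})\|\psi\|$, hence the claimed $\mathscr O(\hbar^\infty)$. The argument is exactly of the same nature as the microlocalization lemmas already used in the paper (e.g. Proposition \ref{prop.micro}), so one may either reproduce it or refer to those.

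The main obstacle is purely technical bookkeeping: one must keep track of the $\hbar$-dependent symbol class — the cutoffs $\zeta_{j,\hbar}$ live at scale $\hbar^\eta$ in $(\tilde\rho,\tilde p)$, hence at scale $\hbar^\eta$ in $(\rho,\mu\sigma)=(\rho,\hbar^2\sigma)$, which in the genuine phase-space variable $\sigma$ is a scale $\hbar^{\eta-2}$ — and verify that the residual $\hbar$-gain $1-3\eta$ at each symbolic order is strictly positive, which is exactly why the hypothesis $\eta<1/3$ is imposed. One also has to check uniformity in $(r,s)$ of the non-degeneracy of the minimum of $\mu_1$ (this is where the isometry remark in Section \ref{sec.roughmicro} and the compactness of $\Gamma$ enter), and that the $r$-dependence through $\mu^{1/2}\beta\chi_{h,1}^2 r^2/2$ in $\tilde p$ does not spoil the symbol estimates — here the confining potential $V_h$ and the exponential $r$-localization already established keep everything under control on the relevant region.
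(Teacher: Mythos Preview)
Your plan is correct in spirit and leads to the same iterative gain $\hbar^{1-3\eta}$, but it differs from the paper's proof in its technical implementation. The paper does \emph{not} construct an explicit parametrix $b_\hbar=\tilde\zeta_{j,\hbar}/(a_\hbar^{\mathrm{eff}}-\lambda)$. Instead it commutes $\zeta_{j,\hbar}^W$ through the eigenvalue equation, bounds the commutator by $C\hbar^{1-\eta}\|\underline{\zeta}_{j,\hbar}^W\psi\|\,\|\zeta_{j,\hbar}^W\psi\|$ via Calder\'on--Vaillancourt, and obtains the lower bound $\Re\langle(\mathrm{Op}_\hbar^W a_\hbar^{\mathrm{eff}}-\lambda)\zeta_{j,\hbar}^W\psi,\zeta_{j,\hbar}^W\psi\rangle\geq (c\hbar^{2\eta}-C\hbar^{2-4\eta})\|\zeta_{j,\hbar}^W\psi\|^2$ by the \emph{Fefferman--Phong inequality} in the class $S^\eta(1)$. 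Dividing gives $\|\zeta_{j,\hbar}^W\psi\|\leq C\hbar^{1-3\eta}\|\underline{\zeta}_{j,\hbar}^W\psi\|$, and one iterates with a growing cutoff. Both routes use the same non-degenerate minimum of $\mu_1$ and both need $\eta<\tfrac13$, but the paper's quadratic-form argument avoids having to verify a symbol class for $1/(a_\hbar^{\mathrm{eff}}-\lambda)$.

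One point in your plan deserves a more careful justification: your claim that ``each derivative of $b_\hbar$ costs at most $\hbar^{-\eta}$'' does not follow merely from $1/(a_\hbar^{\mathrm{eff}}-\lambda)=\mathscr{O}(\hbar^{-2\eta})$. Differentiating the inverse produces $(a_\hbar^{\mathrm{eff}}-\lambda)^{-2}\partial a_\hbar^{\mathrm{eff}}$, which is a priori $\mathscr{O}(\hbar^{-4\eta})$; what saves you is that $\partial_\rho a_\hbar^{\mathrm{eff}}$ and $\partial_{\tilde p} a_\hbar^{\mathrm{eff}}$ are themselves $\mathscr{O}(\hbar^\eta)$ near the boundary of $\mathrm{supp}\,\tilde\zeta_{j,\hbar}$ because the minimum of $\mu_1$ is quadratic. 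You should make this explicit if you pursue the parametrix route. Also, your nesting convention $\zeta^{(0)}\succ\zeta^{(1)}\succ\cdots$ with $\zeta^{(k+1)}=1$ on $\mathrm{supp}\,\zeta^{(k)}$ is inconsistent as written (the second condition forces the supports to \emph{grow} with $k$); the iteration should run with cutoffs of increasing support, exactly as in the paper's ``consider now $\underline{\zeta}_{j,\hbar}^W\psi$''.
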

\begin{proof}
Let us consider the eigenvalue equation
\[\mathrm{Op}_\hbar^Wa_\hbar^{\mathrm{eff}}\psi=\lambda\psi\,.\]
We get
\[\left(\mathrm{Op}_\hbar^Wa_\hbar^{\mathrm{eff}}-\lambda\right)\zeta_{j,\hbar}^W\psi=[\mathrm{Op}_\hbar^Wa_\hbar^{\mathrm{eff}},\zeta_{j,\hbar}^W]\psi\,,\]
and then
\[\Re\langle\left(\mathrm{Op}_\hbar^Wa_\hbar^{\mathrm{eff}}-\lambda\right)\zeta_{j,\hbar}^W\psi,\zeta_{j,\hbar}^W\psi\rangle=\Re\langle[\mathrm{Op}_\hbar^Wa_\hbar^{\mathrm{eff}},\zeta_{j,\hbar}^W]\psi,\zeta_{j,\hbar}^W\psi\rangle\,.\]
Thanks to the Calder\'on-Vaillancourt theorem and support considerations, we get that
\[\Re\langle\left(\mathrm{Op}_\hbar^Wa_\hbar^{\mathrm{eff}}-\lambda\right)\zeta_{j,\hbar}^W\psi,\zeta_{j,\hbar}^W\psi\rangle\leq C\hbar^{1-\eta}\|\underline{\zeta}_{j,\hbar}^W\psi\|\|\zeta_{j,\hbar}^W\psi\|\,,\]
where $\underline{\zeta}_{j,\hbar}$ has a support slightly larger than the one of $\zeta_{j,\hbar}$.
Then, we observe that, for some $c_0>0$,
\[\mu_1(r,s,\rho,\sigma)\geq\Theta_0+c_0\min(\tilde\rho^2+\tilde p^2,1)\,.\]
Note that this implies that $\mu_1-\Theta_0 \gtrsim  \hbar^{2\eta}$ on the support of $\zeta_{j,\hbar}$. 
Therefore on this support we have
\[a_\hbar^{\mathrm{eff}}(r,s,\rho,\sigma)-\lambda\geq c_0\min(\tilde\rho^2+\tilde p^2,1)-C\hbar^2 \, \gtrsim \hbar^{2\eta} - C\hbar^2\,.\]
We can therefore apply the Fefferman-Phong inequality in $S^\eta(1)$ (see \cite{Bony} or \cite{Fermanian}) made of semiclassical symbols for which each derivation implies a loss of $\hbar^{-\eta}$, and we deduce that, for $\eta\in(0,\frac12)$,
\[\Re\langle\left(\mathrm{Op}_\hbar^Wa_\hbar^{\mathrm{eff}}-\lambda\right)
\zeta_{j,\hbar}^W\psi,\zeta_{j,\hbar}^W\psi\rangle\geq (c\hbar^{2\eta}-C\hbar^{2-4\eta})\|\zeta_{j,\hbar}^w\psi\|^2\,.\]
Using $\eta<\frac13$, we find that
\[\|\zeta_{j,\hbar}^W\psi\|\leq C\hbar^{1-3\eta}\|\underline{\zeta}_{j,\hbar}^W\psi\|\,.\]
By considering now $\underline{\zeta}_{j,\hbar}^W\psi$, we get the result by induction.
\end{proof}

\begin{remark}
Note that due to the form of $\Xi_1$, the microlocalization with respect to $\tilde\rho$ at the scale $\hbar^\eta$ also implies  that
\[\zeta(\hbar^{-\eta}\rho)^W\psi=\mathscr{O}(\hbar^\infty)\,.\]	
\end{remark}

Proposition \ref{prop.rhop} invites us to use a Taylor expansion of $a^{\mathrm{eff}}_\hbar$. Remember that the eigenfunction
\[u_{r,s,\rho,\sigma}=u^{\mathrm{dG}}_{\xi_0+\tilde\rho\sin\phi-\tilde p\cos\phi}\]
depends on $(\tilde\rho,\tilde p)$ only. For all $N\in\N$, let us consider the symbol
\begin{multline}\label{eq.aeffN}
a_{\hbar,N}^{\mathrm{eff}}(r,s,\rho,\sigma)={a}_0(r,s,\rho,\sigma)\\
+c_0(\tilde\rho,\tilde p)\sum_{k=3}^N\frac{(\mu^{\mathrm{dG}})^{(k)}(\xi_0)}{k!}(\tilde\rho\sin\phi-\tilde p\cos\phi)^k
+\hbar^2c_0(\tilde\rho,\tilde p)r\chi_{h,1}L_1\sum_{k=1}^N\frac{f^{(k)}(0)}{k!}(\tilde\rho\sin\phi-\tilde p\cos\phi)^k\\
 + \hbar^2c_0(\tilde{\rho},\tilde{p})r \chi_{h,1}  \sum_{1\leq \ell+\ell' \leq N} g_{\ell, \ell'}(r,s)  \tilde{\rho}^\ell \tilde{p}^{\ell'}
-\hbar^3q_{3,\Theta_0}^\pm-\hbar^4 q_{4,\Theta_0}^\pm\,,
\end{multline}
where 
\begin{enumerate}[---]
\item the principal term is
\[{a}_0=\frac{1}{2}(\mu^{\mathrm{dG}})''(\xi_0)(\tilde\rho\sin\phi-\tilde p\cos\phi)^2+(\tilde\rho\cos\phi+\tilde p\sin\phi)^2+V_h(r)\,,\]
\item $c_0$ is a smooth cutoff function equalling $1$ on a small enough neighborhood of $(0,0)$, 
\item the function $f$ is given by
\[f(\xi)=\langle (\xi_0-t)u^{\mathrm{dG}}_{\xi_0+\xi},u^{\mathrm{dG}}_{\xi_0+\xi}\rangle\,,\]
\item the sum involving $g_{\ell, \ell'}$ is the Taylor expansion of $\langle \tilde{n}^0 _2 u^{\mathrm{dG}}_{\xi_0+\xi},u^{\mathrm{dG}}_{\xi_0+\xi}\rangle$ in variables $\tilde\rho$ and $\tilde p$.

\end{enumerate}
Note that $f(0)=0$.

\begin{proposition}\label{prop.2}
Let $M,n\in\mathbb{N}$. There exists $N\in\mathbb{N}$ such that
\[\lambda_n\left(\mathrm{Op}^W_\hbar a_\hbar^{\mathrm{eff}}\right)=\Theta_0+\lambda_n\left(\mathrm{Op}^W_\hbar a_{\hbar,N}^{\mathrm{eff}}\right)+\mathscr{O}(\hbar^M)\,.\]	
\end{proposition}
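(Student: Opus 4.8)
The plan is to replace $a_\hbar^{\mathrm{eff}}$ by its partial Taylor expansion $a_{\hbar,N}^{\mathrm{eff}}$ by exploiting the phase-space concentration of the relevant eigenfunctions, and to conclude through a two-sided min-max comparison. A preliminary remark is that the low-lying eigenvalues of both operators lie in the relevant window: by Propositions \ref{prop.checkLm}, \ref{prop.micro} and \ref{prop.specreducaeff} together with the upper bound $\lambda_n(\mathscr{L}_{h,\delta_1,\delta_2})\leq\lambda_n(\mathscr{L}_h)\leq\Theta_0 h+Ch^{\frac43}$, we have $\lambda_n(\mathrm{Op}^W_\hbar a_\hbar^{\mathrm{eff}})=\Theta_0+\mathscr{O}(\hbar^2)$; and, once the remainder estimate below is available, testing $\mathrm{Op}^W_\hbar a_{\hbar,N}^{\mathrm{eff}}$ against the (microlocalized) first $n$ eigenfunctions of $\mathrm{Op}^W_\hbar a_\hbar^{\mathrm{eff}}$ gives $\lambda_n(\mathrm{Op}^W_\hbar a_{\hbar,N}^{\mathrm{eff}})=\mathscr{O}(\hbar^2)$ as soon as $(N+1)\eta\geq 2$. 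Since the proof of Proposition \ref{prop.rhop} uses nothing more than the coercivity $a_0\geq c_0\min(\tilde\rho^2+\tilde p^2,1)$ of the principal symbol, which is shared by $a_{\hbar,N}^{\mathrm{eff}}$, it applies verbatim to $\mathrm{Op}^W_\hbar a_{\hbar,N}^{\mathrm{eff}}$ as well. Hence, introducing a cutoff $\Theta_\hbar=\chi(\hbar^{-\eta}\tilde\rho,\hbar^{-\eta}\tilde p)\in S^\eta(1)$ equal to $1$ near $(\tilde\rho,\tilde p)=(0,0)$ and supported in $\{|\tilde\rho|+|\tilde p|\lesssim\hbar^\eta\}$, every normalized eigenfunction $\psi$ of either operator with eigenvalue in the window satisfies $\psi=\Theta_\hbar^W\psi+\mathscr{O}(\hbar^\infty)\|\psi\|$.

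Next I would estimate the remainder symbol $R_{\hbar,N}:=a_\hbar^{\mathrm{eff}}-\Theta_0-a_{\hbar,N}^{\mathrm{eff}}$ on $\mathrm{supp}\,\Theta_\hbar$. Using that $\mu_1^{\mathrm{dG}}$ is real-analytic with $(\mu_1^{\mathrm{dG}})'(\xi_0)=0$, that the eigenfunction $u^{\mathrm{dG}}_{\xi_0+\tilde\rho\sin\phi-\tilde p\cos\phi}$ and hence the matrix elements $f$ and $\langle\tilde n_2^0 u^{\mathrm{dG}}_{\xi_0+\xi},u^{\mathrm{dG}}_{\xi_0+\xi}\rangle$ are real-analytic in $(\tilde\rho,\tilde p)$, and that $c_0\equiv 1$ on $\mathrm{supp}\,\Theta_\hbar$ for $\hbar$ small, one sees that on $\mathrm{supp}\,\Theta_\hbar$ the terms $V_h$, $-\hbar^3 q_{3,\Theta_0}^\pm$ and $-\hbar^4 q_{4,\Theta_0}^\pm$ cancel identically and $R_{\hbar,N}$ reduces to the Taylor tails beyond order $N$ of those real-analytic functions. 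Each such tail is $\mathscr{O}((|\tilde\rho|+|\tilde p|)^{N+1})=\mathscr{O}(\hbar^{(N+1)\eta})$ there, the prefactors $\hbar^2 r\chi_{h,1}$ entering the $L_1$- and $\tilde n_2^0$-contributions being $\mathscr{O}(1)$ on $\mathrm{supp}\,\Theta_\hbar$ (in fact $\mathscr{O}(\hbar^{\frac32})$, since $|\tilde p|\lesssim\hbar^\eta$ forces $\hbar\beta\chi_{h,1}^2 r^2\lesssim 1$). Since moreover each derivative in $(r,s,\rho,\sigma)$ costs at most a factor $\hbar^{-\eta}$, one gets $\Theta_\hbar R_{\hbar,N}\in\hbar^{(N+1)\eta}S^\eta(1)$, and as $\eta<\frac13<\frac12$ the Calderón-Vaillancourt theorem in $S^\eta(1)$ yields $\|\mathrm{Op}^W_\hbar(\Theta_\hbar R_{\hbar,N})\|=\mathscr{O}(\hbar^{(N+1)\eta})$.

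Finally, I would run the two-sided min-max argument, exactly in the spirit of the proof of Proposition \ref{prop.specreducaeff}. For $\psi$ in the $n$-dimensional span of the first $n$ eigenfunctions of $\mathrm{Op}^W_\hbar a_{\hbar,N}^{\mathrm{eff}}$, write $\langle\mathrm{Op}^W_\hbar a_\hbar^{\mathrm{eff}}\psi,\psi\rangle=\Theta_0\|\psi\|^2+\langle\mathrm{Op}^W_\hbar a_{\hbar,N}^{\mathrm{eff}}\psi,\psi\rangle+\langle\mathrm{Op}^W_\hbar R_{\hbar,N}\psi,\psi\rangle$; replacing $\psi$ by $\Theta_\hbar^W\psi$ modulo $\mathscr{O}(\hbar^\infty)\|\psi\|$ and invoking the previous step, the last term is $\mathscr{O}(\hbar^{(N+1)\eta})\|\psi\|^2$, whence $\lambda_n(\mathrm{Op}^W_\hbar a_\hbar^{\mathrm{eff}})\leq\Theta_0+\lambda_n(\mathrm{Op}^W_\hbar a_{\hbar,N}^{\mathrm{eff}})+C\hbar^{(N+1)\eta}$ by min-max. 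Exchanging the roles of the two operators (using Proposition \ref{prop.rhop} this time for $\mathrm{Op}^W_\hbar a_\hbar^{\mathrm{eff}}$) gives the reverse inequality, so $|\lambda_n(\mathrm{Op}^W_\hbar a_\hbar^{\mathrm{eff}})-\Theta_0-\lambda_n(\mathrm{Op}^W_\hbar a_{\hbar,N}^{\mathrm{eff}})|\leq C\hbar^{(N+1)\eta}$, and it suffices to take $N$ with $(N+1)\eta\geq\max(M,2)$. The one genuinely delicate point is the symbol-class bookkeeping of the second paragraph: checking that, after localization to $\{|\tilde\rho|+|\tilde p|\lesssim\hbar^\eta\}$ and the cancellation of the $V_h$, $\hbar^3$ and $\hbar^4$ terms, the remainder is of size $\hbar^{(N+1)\eta}$ in a symbol class for which Calderón-Vaillancourt applies with that gain; the rest is routine.
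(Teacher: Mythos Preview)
Your approach is correct and is precisely what the paper has in mind: the paper does not write out a proof of this proposition, but places it immediately after Proposition~\ref{prop.rhop} (microlocalization for $\mathrm{Op}^W_\hbar a_\hbar^{\mathrm{eff}}$) and follows it with Remark~\ref{rem.microprho} (the analogous microlocalization for $\mathrm{Op}^W_\hbar a_{\hbar,N}^{\mathrm{eff}}$), which are exactly the two ingredients you invoke to control the Taylor remainder and run the two-sided min-max.
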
	

Let us now focus on the new operator $\mathrm{Op}^W_\hbar a_{\hbar,N}^{\mathrm{eff}}$.

\begin{remark}\label{rem.microprho}
It is clear that the eigenfunctions of this new operator associated with eigenvalues $\lambda=\Theta_0+\mathscr{O}(\hbar^2)$ are still microlocalized near $\tilde\rho=\tilde p=0$ at the scale $\hbar^{\frac12-\eta}$ using the ellipticity of $\tilde\rho^2+\tilde p^2$. They are also roughly localized near $r=0$ at scale $h^{-\eta/2}$ (due to the presence of the confining potential $V_h$ and the cutoff functions in $r$). The tildes and the $V_h$ can then be removed if necessary.
\end{remark}

The rough localization in $r$ is not sufficient since we want uniform bounds in $\hbar$ with respect to $r$. This is the aim of the following lemma where we prove that the (rescaled) variable $r$ lives at the scale $1$.
\begin{lemma}
Let us consider $n\geq 1$. There exist $\hbar_0>0$, $C>0$ such that, for all $\hbar\in(0,\hbar_0)$ and all eigenfunctions associated with $\lambda_n\left(\mathrm{Op}^W_\hbar a_{\hbar,N}^{\mathrm{eff}}\right)$, we have
\[\|r^4\psi\|\leq C\|\psi\|\,.\]
\end{lemma}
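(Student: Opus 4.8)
Throughout, $\psi$ denotes a normalized eigenfunction, $\mathrm{Op}^W_\hbar a_{\hbar,N}^{\mathrm{eff}}\psi=\lambda\psi$. By the reductions of Section \ref{sec.parametrix} and Proposition \ref{prop.2} we have $\lambda=\mathscr{O}(\hbar^2)$, and by Remark \ref{rem.microprho} the eigenfunction is already (Agmon-)localized where $|r|\lesssim h^{-\eta/2}$ and microlocalized where $\tilde\rho,\tilde p=\mathscr{O}(\hbar^{\frac12-\eta})$; in particular the cutoffs $\chi_{h,1}$ may be treated as $1$ on $\psi$ modulo $\mathscr{O}(\hbar^\infty)$, so that effectively $\tilde p=\Xi_2(\mu\sigma)-\mu^{\frac12}\beta(s)\tfrac{r^2}{2}$. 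The plan is to upgrade this rough control into a weighted estimate in $r$ with an $\hbar$-independent weight, the mechanism being that, once one knows that $\mu\sigma$ is of size $\hbar$, the effective potential in $r$ is genuinely confining at scale $1$.

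First I would prove that $\psi$ is microlocalized where $|\mu\sigma|\leq M_0\hbar$ for a fixed $M_0$, modulo $\mathscr{O}(\hbar^\infty)$. Since $a_0$ is, up to $V_h\geq 0$, a positive definite quadratic form in $(\tilde\rho,\tilde p)$, splitting the squares --- which only costs $\mathscr{O}(\hbar^2)$ in the Weyl calculus since the Poisson bracket of a symbol with itself vanishes --- gives $\|\mathrm{Op}^W_\hbar(\tilde p)\psi\|^2\lesssim\langle\mathrm{Op}^W_\hbar(a_0)\psi,\psi\rangle+\mathscr{O}(\hbar^2)\|\psi\|^2\lesssim\hbar^{2-3\eta}\|\psi\|^2$, the last bound using $\lambda=\mathscr{O}(\hbar^2)$ and the fact that $a_{\hbar,N}^{\mathrm{eff}}-a_0$ acts on $\psi$ as $\mathscr{O}(\hbar^{2-3\eta})$ (here one uses $\tilde p=\sin\phi\,L_2-\cos\phi\,L_1$ with $L_1,L_2$ the rotated momenta). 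As $\mu^{\frac12}\beta\tfrac{r^2}{2}$ is $\mathscr{O}(\hbar^{1-6\eta})=o(1)$ on the support of $\psi$, this forces $\|\mathrm{Op}^W_\hbar(\Xi_2(\mu\sigma))\psi\|=o(1)\|\psi\|$, hence, by the usual commutator iteration (as in the proof of Proposition \ref{prop.rhop}, using $\Xi_2(\mu\sigma)^2\gtrsim\min((\mu\sigma)^2,1)$ and a Fefferman--Phong estimate in $S^\eta(1)$), that $\psi$ is microlocalized where $|\mu\sigma|\leq\delta_0$ for any fixed $\delta_0>0$. The sharpening to $|\mu\sigma|\leq M_0\hbar$ is a bootstrap: on $\{|\mu\sigma|\leq\delta_0\}$ the effective one-dimensional operator in $(r,\rho)$ is, near the bottom of its well $r_\star^2=2\Xi_2(\mu\sigma)/(\mu^{\frac12}\beta(s))$, an $\hbar$-harmonic oscillator whose curvature is comparable to $\hbar\,\Xi_2(\mu\sigma)$, so its ground energy is $\gtrsim\hbar\sqrt{\hbar\,\Xi_2(\mu\sigma)}$; since $\lambda=\mathscr{O}(\hbar^2)$ this forces $\Xi_2(\mu\sigma)=\mathscr{O}(\hbar)$ on the part of $\psi$ where $|\mu\sigma|$ is not already $\lesssim\hbar$, which is made rigorous by testing the eigenvalue equation against $\theta(\mu\sigma)^W\psi$ with $\theta$ supported in $\{|\mu\sigma|\geq M_0\hbar\}$ and comparing the two resulting estimates.

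With $|\mu\sigma|\leq M_0\hbar$ in hand, one has on $\{|r|\geq R_0\}$ with $R_0$ a fixed constant that $a_0-\lambda\geq c\bigl(\mu^{\frac12}\beta\tfrac{r^2}{2}-\Xi_2(\mu\sigma)\bigr)^2+V_h-C\hbar^2\gtrsim\hbar^2 r^4$. I would then run an IMS/Agmon argument: conjugate $\mathrm{Op}^W_\hbar a_{\hbar,N}^{\mathrm{eff}}$ by $e^{g(r)}$, with $g$ vanishing for $|r|\leq R_0$ and of small fixed slope $c_0$ beyond, use $\hbar^2 g'(r)^2\leq\tfrac12(a_0-\lambda)$ on $\{|r|\geq R_0\}$, and note that $\Xi_2(\mu\sigma)$, $r$ and $V_h$ all commute with multiplication by $e^{g(r)}$, so that the conjugation only affects the $\tilde\rho=\Xi_1(\hbar D_r)$ part and produces errors of size $\mathscr{O}(\hbar^2 c_0^2)$ together with the $\mathscr{O}(\hbar^\infty)$ tails coming from $|r|\simeq R_0$. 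This gives $\|e^{c_0 r}\psi\|\leq C\|\psi\|$ with $C$ independent of $\hbar$, whence $\|r^4\psi\|\leq C\|\psi\|$ (and in fact $\|r^k\psi\|\leq C_k\|\psi\|$ for every $k$) follows immediately.

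The main obstacle is the bootstrap in the second step: obtaining the \emph{uniform} bound $|\mu\sigma|\leq M_0\hbar$ rather than merely $|\mu\sigma|=o(1)$, which requires intertwining the microlocalization in $\mu\sigma$ with the improving localization in $r$. Because the $r^2$-term hidden inside $\tilde p$ prevents $\mathrm{Op}^W_\hbar(a_0)$ from having a bounded symbol, every G\aa rding, commutator and Fefferman--Phong step must be carried out in weighted symbol classes, keeping track of the loss $\hbar^{-\eta}$ per derivative and of the polynomial weights in $r$ inherited from $V_h$, which is where the smallness of $\eta$ is repeatedly used.
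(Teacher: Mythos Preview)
Your plan reverses the logical order of the paper and has a genuine gap in the bootstrap step. In the paper, the microlocalization $|\mu\sigma|\leq M_0\hbar$ (Lemma \ref{lem.microhsigma}) is established \emph{after} the $r^4$--estimate, and its proof uses Lemma \ref{lem.r4tronc}, which is the truncated version of the present lemma. Your first step, however, attempts to obtain $|\mu\sigma|\leq M_0\hbar$ first. The soft part of your argument does give $|\Xi_2(\mu\sigma)|=o(1)$: from $\|\tilde p^W\psi\|=o(1)$ and the rough bound $\mu^{1/2}\beta r^2/2=\mathscr{O}(\hbar^{1-6\eta})$ one deduces $\|\Xi_2(\mu\sigma)^W\psi\|=\mathscr{O}(\hbar^{1-6\eta})$. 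But the bootstrap cannot be pushed further. On the region $\{|\mu\sigma|\geq M_0\hbar\}$ the well of the effective $(r,\rho)$--symbol sits at $r_\star^2\sim\Xi_2(\mu\sigma)/\hbar$, and for $\Xi_2(\mu\sigma)$ of order $\hbar^{1-6\eta}$ this gives $r_\star\sim\hbar^{-3\eta}$, which is exactly the rough localization scale you already have: the well is \emph{not} excluded by the a priori support information, so the energy is not forced to be large there. Iterating yields the fixed point $\big(|r|\lesssim\hbar^{-3\eta},\ \Xi_2(\mu\sigma)\lesssim\hbar^{1-6\eta}\big)$ rather than $(|r|\lesssim 1,\ \Xi_2(\mu\sigma)\lesssim\hbar)$. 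Hence your second step never starts, since the lower bound $a_0-\lambda\gtrsim\hbar^2 r^4$ on $\{|r|\geq R_0\}$ fails without the sharp $\mu\sigma$--localization.

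The paper avoids this circularity by a completely different mechanism: after localizing with $\chi=\chi_\pm$ supported in $\{\pm r\geq 1/2\}$, it uses the operator inequality $A^2+B^2\geq\pm i[A,B]$ with $A=\rho^W$ and $B=p^W$. Since $\{\rho,p\}=-\mu^{1/2}\beta r=-\hbar\beta r$, one gets
\[
\langle\mathrm{Op}^W_\hbar(\rho^2+p^2)\chi\psi,\chi\psi\rangle\ \geq\ \hbar^2\min\beta\ \Big|\int r\,|\chi\psi|^2\Big|\,,
\]
and the left-hand side is controlled by $\lambda=\mathscr{O}(\hbar^2)$ plus $\mathscr{O}(\hbar^2)$ errors via the Fefferman--Phong inequality. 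This directly yields $\||r|^{1/2}\psi\|\leq C\|\psi\|$ with no input on $\sigma$ at all, and iteration gives the $r^4$ bound. The point is that the confining mechanism in $r$ is already encoded in the non-commutation of $\rho$ and $p$, independently of any microlocal information on $\sigma$; your Agmon scheme does not see this because it treats $\sigma$ and $r$ sequentially rather than through their commutator.
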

\begin{proof}
Let us write
\[\left(\mathrm{Op}^W_\hbar a_{\hbar,N}^{\mathrm{eff}}\right)\psi=\lambda_n\left(\mathrm{Op}^W_\hbar a_{\hbar,N}^{\mathrm{eff}}\right)\psi\,.\]
Let us consider a non-negative smooth function $\chi_+(r)$ equalling $1$ on $r\geq 1$ and $0$ for $r\leq\frac12$. We set $\chi_-(r)=\chi_+(-r)$ and we let $\chi=\chi_\pm$.
It follows that
\begin{equation}\label{eq.eei}
\langle\left(\mathrm{Op}^W_\hbar a_{\hbar,N}^{\mathrm{eff}}\right)\chi\psi,\chi\psi\rangle\leq C\hbar^2\|\chi\psi\|^2+C\hbar\|\tilde\rho^W\psi\|\|\psi\|\,.
\end{equation}
Note that
\begin{multline*}
\Big\|\big( \hbar^2c_0(\tilde\rho,\tilde p)r\chi_{h,1}L_1\sum_{k=1}^N\frac{f^{(k)}(0)}{k!}(\tilde\rho\sin\phi-\tilde p\cos\phi)^k\\
 + \hbar^2c_0(\tilde{\rho},\tilde{p})  r\chi_{\hbar,1} \sum_{1\leq \ell+\ell' \leq N} g_{\ell, \ell'}(r,s) \tilde{\rho}^\ell \tilde{p}^{\ell'} -\hbar^3q_{3,\Theta_0}^\pm-\hbar^4 q_{4,\Theta_0}^\pm\big)^W(\chi\psi)\Big\|=\mathscr{O}(\hbar^2)
\end{multline*}
since $|r\chi_{h,1}|\leq \hbar^{-\eta}$ and $\psi$ is microlocalized in $(\tilde\rho,\tilde p)$ at the scale $\hbar^{\frac12-\eta}$. The Calder\'on-Vaillancourt is used to control the last two terms.
Then, note that, due to the small support of $c_0$,
\begin{multline*}
\frac{1}{2}(\mu^{\mathrm{dG}})''(\xi_0)(\tilde\rho\sin\phi-\tilde p\cos\phi)^2+(\tilde\rho\cos\phi+\tilde p\sin\phi)^2
+c_0(\tilde\rho,\tilde p)\sum_{k=3}^N\frac{(\mu^{\mathrm{dG}})^{(k)}(\xi_0)}{k!}(\tilde\rho\sin\phi-\tilde p\cos\phi)^k\\
\geq d(\tilde \rho^2+\tilde p^2)\,.
\end{multline*}
Thanks to the Fefferman-Phong inequality (the symbols are bounded), we get
 \[\langle\left(\mathrm{Op}^W_\hbar a_{\hbar,N}^{\mathrm{eff}}\right)\chi\psi,\chi\psi\rangle\geq d\langle\mathrm{Op}^W_\hbar(\tilde\rho^2+\tilde p^2)\chi\psi,\chi\psi\rangle-D\hbar^2\|\psi\|^2\,.\]
Using again Remark \ref{rem.microprho}, we infer that
 \[\langle\left(\mathrm{Op}^W_\hbar a_{\hbar,N}^{\mathrm{eff}}\right)\chi\psi,\chi\psi\rangle\geq d\langle\mathrm{Op}^W_\hbar(\rho^2+ p^2)\chi\psi,\chi\psi\rangle-D\hbar^2\|\psi\|^2\,.\]
Notice that
\[\begin{split}\langle\mathrm{Op}^W_\hbar(\rho^2+ p^2)\chi\psi,\chi\psi\rangle&\geq |\langle[\rho^W,p^W]\chi\psi,\chi\psi\rangle|=\hbar\mu^{\frac12}\left|\int r\beta|\chi\psi|^2\right|=\hbar^2\left|\int r\beta|\chi\psi|^2\right|\\
&\geq \hbar^2\min\beta\left|\int r|\chi\psi|^2\right|\,.
\end{split}\]
From \eqref{eq.eei}, it follows that
\[ \hbar^2\min\beta\left|\int r|\chi\psi|^2\right|\leq \tilde D\hbar^2\|\psi\|^2+C\hbar\|\tilde\rho^W\psi\|\|\psi\|\,,\]
which gives, with the eigenvalue equation,
\[ \hbar^2\min\beta\int |r||\chi_{\pm}\psi|^2\leq  D\hbar^2\|\psi\|^2\,.\]
This gives that
\[\||r|^{\frac12}\psi\|\leq C\|\psi\|\,.\]
Iterating this process gives indeed a localization at any power of $r$, in particular the power $4$, and the result follows. We skip this iteration which follows from similar arguments involving perhaps some commutators between $r$ and $\rho$ of the  order ${\hbar^{1-\eta}}$.
\end{proof}

Let us now consider a smooth function $\chi_M$ such that $\chi_M$ equals $1$ away from $[-M,M]$ neighborhood of $0$ and equals $0$ on $[-M/2,M/2]$. We can slightly adapt the proof of the last lemma and get the following.

\begin{lemma}\label{lem.r4tronc}
Let us consider $n\geq 1$. There exist $\hbar_0>0$, $C>0$ such that, for all $\hbar\in(0,\hbar_0)$ and all eigenfunctions associated with $\lambda_n\left(\mathrm{Op}^W_\hbar a_{\hbar,N}^{\mathrm{eff}}\right)$, we have
\[\|r^4\mathrm{Op}^W_\hbar(\chi_M(\hbar\sigma))\psi\|\leq C\|\mathrm{Op}^W_\hbar(\chi_M(\hbar\sigma))\psi\|+\mathscr{O}(\hbar^\infty)\|\psi\|\,.\]
\end{lemma}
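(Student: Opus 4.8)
The plan is to rerun the proof of the preceding lemma with
\[\Psi:=\mathrm{Op}^W_\hbar(\chi_M(\hbar\sigma))\psi\]
playing the role of the eigenfunction, $\psi$ being a normalized eigenfunction of $\mathrm{Op}^W_\hbar a_{\hbar,N}^{\mathrm{eff}}$ with $\lambda_n:=\lambda_n(\mathrm{Op}^W_\hbar a_{\hbar,N}^{\mathrm{eff}})=\mathscr{O}(\hbar^2)$. First I would record that $\Psi$ almost solves the eigenvalue equation: from $\mathrm{Op}^W_\hbar a_{\hbar,N}^{\mathrm{eff}}\psi=\lambda_n\psi$ one gets
\[(\mathrm{Op}^W_\hbar a_{\hbar,N}^{\mathrm{eff}}-\lambda_n)\Psi=\mathscr{R}_\hbar\psi\,,\qquad \mathscr{R}_\hbar:=[\mathrm{Op}^W_\hbar a_{\hbar,N}^{\mathrm{eff}},\mathrm{Op}^W_\hbar(\chi_M(\hbar\sigma))]\,,\]
and the key point will be that $\chi_M(\hbar\sigma)$ is a function of $\sigma$ alone for which every $\sigma$-derivative gains a power of $\hbar$; the Moyal calculus then yields $\mathscr{R}_\hbar=\mathrm{Op}^W_\hbar(c_\hbar)$ with $c_\hbar=\mathscr{O}(\hbar^2)$ supported in $\{|\hbar\sigma|\in[M/2,M]\}$, and, combined with the $(\tilde\rho,\tilde p)$-microlocalization of $\psi$ (Remark \ref{rem.microprho}), the rapid decay of $\psi$ in $r$ (the preceding lemma) and the iteration carried out below, $\mathscr{R}_\hbar\psi$ will only feed $\mathscr{O}(\hbar^\infty)\|\psi\|$ terms into all the estimates. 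I would also note that $\chi_M(\hbar\sigma)$ commutes \emph{exactly} with any function of $r$ alone — in particular with the cutoffs $\chi_\pm(r)$ used below — and commutes with the microlocalization cutoffs of Remark \ref{rem.microprho} modulo negligible terms, so that $\Psi$ is still microlocalized near $\tilde\rho=\tilde p=0$ up to $\mathscr{O}(\hbar^\infty)\|\psi\|$.

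Granting this, the chain of estimates of the preceding lemma goes through with $\Psi$ in place of $\psi$. Testing the identity above against $\chi_\pm(r)^2\Psi$ and using the Calder\'on-Vaillancourt theorem together with the microlocalization should give
\[\langle(\mathrm{Op}^W_\hbar a_{\hbar,N}^{\mathrm{eff}})\chi_\pm\Psi,\chi_\pm\Psi\rangle\leq C\hbar^2\|\chi_\pm\Psi\|^2+C\hbar\|\tilde\rho^W\Psi\|\,\|\Psi\|+\mathscr{O}(\hbar^\infty)\|\psi\|^2\,.\]
Then I would bound the left-hand side from below by the Fefferman-Phong inequality, whose principal part is $\geq d(\tilde\rho^2+\tilde p^2)$, remove the tildes thanks to the microlocalization, and exploit the commutator between $\rho^W$ and $p^W$ (which equals $\hbar^2\beta(s)r$ up to a unit factor on $\{\chi_{h,1}=1\}$) exactly as before, reaching
\[\hbar^2\min\beta\int|r|\,|\chi_\pm\Psi|^2\leq D\hbar^2\|\Psi\|^2+\mathscr{O}(\hbar^\infty)\|\psi\|^2\,,\]
where the auxiliary bound $\|\tilde\rho^W\Psi\|\lesssim\hbar\|\Psi\|+\mathscr{O}(\hbar^\infty)\|\psi\|$ used here is obtained, again as before, by testing the identity for $\Psi$ against $\Psi$ and invoking Fefferman-Phong. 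Dividing by $\hbar^2$ and adding the trivial contribution of $\{|r|\leq1\}$ gives $\||r|^{\frac12}\Psi\|\leq C\|\Psi\|+\mathscr{O}(\hbar^\infty)\|\psi\|$; iterating the same scheme with the weights $|r|^{k}$ — picking up commutators between $r$ and $\rho$ of order $\hbar^{1-\eta}$, as in the preceding lemma — should upgrade this successively to $\|r^4\Psi\|\leq C\|\Psi\|+\mathscr{O}(\hbar^\infty)\|\psi\|$, which is the claim.

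The main obstacle I anticipate is precisely the bookkeeping of $\mathscr{R}_\hbar$: one must verify that, once it is paired against functions microlocalized in $(\tilde\rho,\tilde p)$ and supported where $r$ is bounded, the terms it produces are genuinely $\mathscr{O}(\hbar^\infty)\|\psi\|$ rather than merely $\mathscr{O}(\hbar^2)\|\psi\|$ — it is this gain, together with the fact that $\chi_M(\hbar\sigma)$ depends only on $\sigma$, that keeps the right-hand side of the form $C\|\Psi\|+\mathscr{O}(\hbar^\infty)\|\psi\|$ and makes the inequality usable in the next step, namely the localization of the eigenfunctions where $\hbar\sigma$ is bounded. All the remaining manipulations are a routine repetition of the pseudodifferential computations already performed in the proof of the preceding lemma.
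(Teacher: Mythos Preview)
Your proposal is essentially the paper's own approach: the paper's entire proof consists of the single sentence ``We can slightly adapt the proof of the last lemma and get the following,'' and what you have written is precisely a careful elaboration of that adaptation, replacing the eigenfunction $\psi$ by $\Psi=\mathrm{Op}^W_\hbar(\chi_M(\hbar\sigma))\psi$ and tracking the extra commutator $\mathscr{R}_\hbar$.

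The technical concern you raise at the end is legitimate and is exactly the point the paper leaves implicit. One useful observation that helps with the bookkeeping: the principal symbol of $\mathscr{R}_\hbar$ is $-\frac{\hbar^2}{i}\,\partial_s a_{\hbar,N}^{\mathrm{eff}}\cdot\chi_M'(\hbar\sigma)$, and since $a_0$ is quadratic in $(\tilde\rho,\tilde p)$ while $V_h$ is $s$-independent, $\partial_s a_0$ carries at least one factor of $\tilde\rho$ or $\tilde p$; the microlocalization of Remark~\ref{rem.microprho} then gives an extra gain beyond the naive $\hbar^2$. Moreover, the support of $\chi_M'(\hbar\sigma)$ lies inside the region where $\chi_M$ itself is nonzero (namely $\{|\hbar\sigma|\in[M/2,M]\}$), so the commutator can be rewritten with a cutoff $\underline{\chi}_M(\hbar\sigma)$ whose support is only slightly larger than that of $\chi_M$; the resulting error terms are then of the same nature as those already appearing in the induction of Lemma~\ref{lem.microhsigma} and are absorbed there. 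With these remarks the remainder really does take the form stated, and your sketch is correct.
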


We have now everything in hand to prove a refined microlocalization of the eigenfunctions with respect to $\sigma$.
\begin{lemma}\label{lem.microhsigma}
 There exist $\hbar_0, M>0$ such that, for all $\hbar\in(0,\hbar_0)$, and all eigenfunctions $\psi$ of $\mathrm{Op}^W_\hbar a_{\hbar,N}^{\mathrm{eff}}$ associated with $\lambda=\mathscr{O}(\hbar^2)$, we have
\[\mathrm{Op}^W_\hbar(\chi_M(\hbar\sigma))\psi=\mathscr{O}(\hbar^\infty)\|\psi\|\,.\]
\end{lemma}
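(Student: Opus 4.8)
The plan is to prove the lemma by an energy--commutator argument built on three inputs: the microlocalization of $\psi$ near $\tilde\rho=\tilde p=0$ at the scale $\hbar^{\frac12-\eta}$ (Remark \ref{rem.microprho}), the weighted bounds $\|r^{2k}\psi\|\le C_k\|\psi\|$ obtained by iterating Lemma \ref{lem.r4tronc} and the lemma preceding it, and the explicit identity $\tilde p=\Xi_2(\hbar^2\sigma)-\hbar\beta\chi_{h,1}^2\frac{r^2}{2}$ from \eqref{eq.trhotp} (recall $\mu=\hbar^2$, so $\mu^{\frac12}=\hbar$). First I would note the symbol inequality $a_{\hbar,N}^{\mathrm{eff}}\ge c_1\min(\tilde\rho^2+\tilde p^2,1)+V_h(r)-C\hbar^2$, which follows from \eqref{eq.aeffN}, the properties of $\mu_1^{\mathrm{dG}}$ recalled below \eqref{eq:bigthetazero}, and the fact that all the remaining terms in $a_{\hbar,N}^{\mathrm{eff}}$ carry a positive power of $\hbar$. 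Testing the eigenvalue equation on $\psi$, using $\lambda=\mathscr{O}(\hbar^2)$ and the Fefferman--Phong inequality in $S(1)$, then yields $\langle\mathrm{Op}^W_\hbar(\tilde p^2)\psi,\psi\rangle=\mathscr{O}(\hbar^2)\|\psi\|^2$, hence $\|\mathrm{Op}^W_\hbar(\tilde p)\psi\|=\mathscr{O}(\hbar)\|\psi\|$; since $\|r^2\psi\|^2=\langle r^4\psi,\psi\rangle\le C\|\psi\|^2$, the multiplication operator $\hbar\beta(s)\chi_{h,1}^2\frac{r^2}{2}$ applied to $\psi$ is also $\mathscr{O}(\hbar)\|\psi\|$, so
\[\big\|\mathrm{Op}^W_\hbar\!\big(\Xi_2(\hbar^2\sigma)\big)\psi\big\|\le\|\mathrm{Op}^W_\hbar(\tilde p)\psi\|+\hbar\Big\|\beta\chi_{h,1}^2\tfrac{r^2}{2}\,\psi\Big\|=\mathscr{O}(\hbar)\|\psi\|\,.\]

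Next I would exploit that $\Xi_2(\hbar^2\sigma)$ is coercive on $\mathrm{supp}\,\chi_M(\hbar\sigma)$: since $\Xi_2(x)=x$ for $|x|\le M$ and saturates at level $\sim M$ afterwards, one has $|\Xi_2(\hbar^2\sigma)|\ge c\,\hbar M$ on $\{|\hbar\sigma|\ge M/2\}$. As $\Xi_2(\hbar^2\sigma)$ and $\chi_M(\hbar\sigma)$ are Fourier multipliers in $s$ and hence commute, writing $\Psi_M=\mathrm{Op}^W_\hbar(\chi_M(\hbar\sigma))\psi$ gives
\[(c\hbar M)^2\|\Psi_M\|^2\le\big\langle\mathrm{Op}^W_\hbar\!\big(\Xi_2(\hbar^2\sigma)^2\chi_M(\hbar\sigma)^2\big)\psi,\psi\big\rangle=\big\|\mathrm{Op}^W_\hbar(\chi_M(\hbar\sigma))\,\mathrm{Op}^W_\hbar\!\big(\Xi_2(\hbar^2\sigma)\big)\psi\big\|^2\le C\hbar^2\|\psi\|^2\,,\]
so $\|\Psi_M\|\le CM^{-1}\|\psi\|$, and running the same computation with the weights $r^{2k}$ improves this to $\|\Psi_M\|\le C_kM^{-k}\|\psi\|$ for every $k$.

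To upgrade these gains into the claimed $\mathscr{O}(\hbar^\infty)$ for one fixed large $M$, I would iterate in the spirit of the proof of Proposition \ref{prop.rhop}. The key structural facts are that $[\mathrm{Op}^W_\hbar a_{\hbar,N}^{\mathrm{eff}},\mathrm{Op}^W_\hbar(\chi_M(\hbar\sigma))]$ is $\mathscr{O}(\hbar^2)$ with symbol supported in the thin shell $\{M/2\le|\hbar\sigma|\le M\}$, and that $a_{\hbar,N}^{\mathrm{eff}}-\lambda\gtrsim1$ once $|\hbar\sigma|\gtrsim\hbar^{-1}$ (there $|\Xi_2(\hbar^2\sigma)|$ is of size $\sim M$), so that the ``far'' shells $\{|\hbar\sigma|\ge c\hbar^{-1}\}$ already contribute $\mathscr{O}(\hbar^\infty)$ by elliptic regularity. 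Writing $(\mathrm{Op}^W_\hbar a_{\hbar,N}^{\mathrm{eff}}-\lambda)\Psi_M=[\mathrm{Op}^W_\hbar a_{\hbar,N}^{\mathrm{eff}},\mathrm{Op}^W_\hbar(\chi_M(\hbar\sigma))]\psi$ and combining the coercivity above with the $r$-localization needed to treat the $\hbar\beta\chi_{h,1}^2\frac{r^2}{2}$ correction, one obtains an estimate of the form $\|\Psi_M\|\le\delta(M)\|\underline\Psi_M\|+\mathscr{O}(\hbar^\infty)\|\psi\|$ with $\underline\Psi_M$ microlocalized in a slightly enlarged $\sigma$-region and $\delta(M)\to0$; running this through a dyadic decomposition of $\{|\hbar\sigma|\ge M/2\}$ and summing yields $\Psi_M=\mathscr{O}(\hbar^\infty)\|\psi\|$ for $M$ chosen large enough. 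I expect the main obstacle to be exactly this last bookkeeping: Lemma \ref{lem.r4tronc} provides only a weighted $L^2$ bound rather than an $\mathscr{O}(\hbar^\infty)$-microlocalization in $r$, so the interplay between the $r$- and $\sigma$-localizations must be organized with care; the rest reduces to routine use of the semiclassical Weyl and Fefferman--Phong calculus in the classes $S(1)$ and $S^\eta(1)$ already in play.
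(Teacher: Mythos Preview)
Your first two displayed estimates are fine and do give $\|\Psi_M\|\le CM^{-1}\|\psi\|$; but this is only a fixed bound, independent of $\hbar$, and the sentence ``running the same computation with the weights $r^{2k}$ improves this to $\|\Psi_M\|\le C_kM^{-k}\|\psi\|$'' has no visible mechanism: the $r$-weights control the $\hbar\beta\chi_{h,1}^2\frac{r^2}{2}$ part of $\tilde p$, they do not by themselves extract additional powers of $\Xi_2(\hbar^2\sigma)$.

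The real gap is in the iteration. You write the commutator as $\mathscr{O}(\hbar^2)$ and the gain factor as $\delta(M)$ with $\delta(M)\to0$; but an $\hbar$-independent factor smaller than $1$ cannot produce $\mathscr{O}(\hbar^\infty)$ after finitely many nested cutoffs, and the dyadic decomposition does not help since the sequence of nested shells has bounded length once $M$ is fixed. In the paper's proof the iteration works because the commutator \emph{inner product} is $\mathscr{O}(\hbar^3)$, one full power better than the naive $\mathscr{O}(\hbar^2)$: since $\chi_M(\hbar\sigma)$ depends only on $\sigma$, the leading commutator symbol is $-i\hbar^2\chi_M'(\hbar\sigma)\,\partial_s a_{\hbar,N}^{\mathrm{eff}}$, and $\partial_s a_0$ is a quadratic form in $(\tilde\rho,\tilde p)$ (from $\phi'(s)$) plus terms carrying an explicit $\hbar$ (from $\partial_s\tilde p=-\hbar\beta'(s)\chi_{h,1}^2\frac{r^2}{2}$). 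Paired against $\underline{\chi}_M^W\psi$, the quadratic part picks up an extra $\hbar$ via the energy bound $\|\tilde\rho^W\psi\|+\|\tilde p^W\psi\|=\mathscr{O}(\hbar)$ you already derived, yielding
\[
|\langle[\mathrm{Op}^W_\hbar a_{\hbar,N}^{\mathrm{eff}},\mathrm{Op}^W_\hbar(\chi_M(\hbar\sigma))]\psi,\Psi_M\rangle|
\le C\hbar^3\|\underline{\chi}_M^W\psi\|^2+\mathscr{O}(\hbar^\infty)\|\psi\|^2.
\]
Combined with the lower bound $\langle(\mathrm{Op}^W_\hbar a_{\hbar,N}^{\mathrm{eff}})\Psi_M,\Psi_M\rangle\ge d\hbar^2\|\Psi_M\|^2$ (with $d$ large for $M$ large, using Lemma~\ref{lem.r4tronc} to pass from $\tilde p$ to $\Xi_2(\mu\sigma)$) and $\lambda=\mathscr{O}(\hbar^2)$, this gives $\|\Psi_M\|^2\le C\hbar\|\underline{\chi}_M^W\psi\|^2+\mathscr{O}(\hbar^\infty)\|\psi\|^2$, and now the induction genuinely gains a power of $\hbar$ at each step. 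Without identifying this extra $\hbar$ in the commutator, your scheme stalls at a fixed constant and does not reach $\mathscr{O}(\hbar^\infty)$.
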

\begin{proof}
Let us write
\[\left(\mathrm{Op}^W_\hbar a_{\hbar,N}^{\mathrm{eff}}\right)\psi=\lambda\psi\,,\]
and observe that
\begin{multline}\label{eq.microlocchiM}
\langle\left(\mathrm{Op}^W_\hbar a_{\hbar,N}^{\mathrm{eff}}\right)\mathrm{Op}^W_\hbar(\chi_M(\hbar\sigma))\psi,\mathrm{Op}^W_\hbar(\chi_M(\hbar\sigma))\psi\rangle\\
\leq C\hbar^2\|\mathrm{Op}^W_\hbar(\chi_M(\hbar\sigma))\|^2+|\langle[\mathrm{Op}^W_\hbar a_{\hbar,N}^{\mathrm{eff}},\mathrm{Op}^W_\hbar(\chi_M(\hbar\sigma))]\psi, \mathrm{Op}^W_\hbar(\chi_M(\hbar\sigma))\psi\rangle|\,.
\end{multline}
As we did before, we have
\begin{multline*}
\langle\left(\mathrm{Op}^W_\hbar a_{\hbar,N}^{\mathrm{eff}}\right)\mathrm{Op}^W_\hbar(\chi_M(\hbar\sigma))\psi,\mathrm{Op}^W_\hbar(\chi_M(\hbar\sigma))\psi\rangle\\
\geq c(\|\tilde\rho^W \mathrm{Op}^W_\hbar(\chi_M(\hbar\sigma))\psi\|^2+\|\tilde p^W\mathrm{Op}^W_\hbar(\chi_M(\hbar\sigma))\psi\|^2)-C\hbar^2\|\mathrm{Op}^W_\hbar(\chi_M(\hbar\sigma))\psi\|^2\,.
\end{multline*}
By using the microlocalization with respect to $\rho$ and $r$, we have
\begin{multline*}
\langle\left(\mathrm{Op}^W_\hbar a_{\hbar,N}^{\mathrm{eff}}\right)\mathrm{Op}^W_\hbar(\chi_M(\hbar\sigma))\psi,\mathrm{Op}^W_\hbar(\chi_M(\hbar\sigma))\psi\rangle\\
\geq c(\|\rho^W \mathrm{Op}^W_\hbar(\chi_M(\hbar\sigma))\psi\|^2+\| p^W\mathrm{Op}^W_\hbar(\chi_M(\hbar\sigma))\psi\|^2)-C\hbar^2\|\mathrm{Op}^W_\hbar(\chi_M(\hbar\sigma))\psi\|^2+\mathscr{O}(\hbar^\infty)\|\psi\|^2\,,
\end{multline*}
where $p=\Xi_2(\mu\sigma)-\beta\frac{r^2}{2}$. We find
\begin{equation*}
\begin{split}
&\langle\left(\mathrm{Op}^W_\hbar a_{\hbar,N}^{\mathrm{eff}}\right)\mathrm{Op}^W_\hbar(\chi_M(\hbar\sigma))\psi,\mathrm{Op}^W_\hbar(\chi_M(\hbar\sigma))\psi\rangle\\
&\geq c\| p^W\mathrm{Op}^W_\hbar(\chi_M(\hbar\sigma))\psi\|^2-C\hbar^2\|\mathrm{Op}^W_\hbar(\chi_M(\hbar\sigma))\psi\|^2+\mathscr{O}(\hbar^\infty)\|\psi\|^2\\
&\geq \frac{c}{2}\|\mathrm{Op}^W_\hbar\Xi_2(\mu\sigma)\,\mathrm{Op}^W_\hbar(\chi_M(\hbar\sigma))\psi\|^2-C\hbar^2\|\mathrm{Op}^W_\hbar(\chi_M(\hbar\sigma))\psi\|^2+\mathscr{O}(\hbar^\infty)\|\psi\|^2\,,
\end{split}
\end{equation*}
where we used Lemma \ref{lem.r4tronc} to control the term involving $r^2$ in $p$. For $M$ large enough, there exists $d>0$ such that
\begin{equation*}
\langle\left(\mathrm{Op}^W_\hbar a_{\hbar,N}^{\mathrm{eff}}\right)\mathrm{Op}^W_\hbar(\chi_M(\hbar\sigma))\psi,\mathrm{Op}^W_\hbar(\chi_M(\hbar\sigma))\psi\rangle
\geq d\hbar^2\|\mathrm{Op}^W_\hbar(\chi_M(\hbar\sigma))\psi\|^2+\mathscr{O}(\hbar^\infty)\|\psi\|^2\,,
\end{equation*}
where we have used that, on the support of $\sigma \mapsto \chi_M(\hbar\sigma)$, we have $ \Xi_2(\mu\sigma) \geq M\hbar^2/2$ (see Section \ref{sec.roughmicro} and remember that $\mu = \hbar^2$).

Moreover, we have
\begin{multline*}
|\langle[\mathrm{Op}^W_\hbar a_{\hbar,N}^{\mathrm{eff}},\mathrm{Op}^W_\hbar(\chi_M(\hbar\sigma))]\psi, \mathrm{Op}^W_\hbar(\chi_M(\hbar\sigma))\psi\rangle|\leq C\hbar^3\|\mathrm{Op}^W_\hbar(\underline{\chi_M}(\hbar\sigma))\psi\|^2+\mathscr{O}(\hbar^\infty)\|\psi\|^2\,.
\end{multline*}
With \eqref{eq.microlocchiM}, we deduce that
\[\|\mathrm{Op}^W_\hbar({\chi_M}(\hbar\sigma))\psi\|^2\leq C\hbar\|\mathrm{Op}^W_\hbar(\underline{\chi_M}(\hbar\sigma))\psi\|^2+\mathscr{O}(\hbar^\infty)\|\psi\|^2\,.\]
By an induction argument, we deduce the result.
\end{proof}
The microlocalization established in Lemma \ref{lem.microhsigma} allows to replace $\Xi_2(\hbar^2\sigma)$ by $\hbar\Xi_2(\hbar\sigma)$ in $a_{\hbar,N}^{\mathrm{eff}}$. The rough localization with respect to $r$ (caused by $V_h$) also allows to remove the $\chi_{h,1}$ in the principal symbol, and the rough microlocalisation with respect to $\rho$ to replace $\tilde\rho$ by $\rho$. That is why we consider
\begin{multline*}
	\mathfrak{a}_{\hbar,N}^{\mathrm{eff}}(r,s,\rho,\sigma)=\mathfrak{a}_0(r,s,\rho,\sigma)\\
	+c_0(\rho,\hbar\hat p)\sum_{k=3}^N\frac{(\mu^{\mathrm{dG}})^{(k)}(\xi_0)}{k!}(\rho\sin\phi-\hbar\hat p\cos\phi)^k
	+\hbar^2c_0(\rho,\hbar\hat p)r\chi_{h,1}L_1\sum_{k=1}^N\frac{f^{(k)}(0)}{k!}(\rho\sin\phi-\hbar\hat p\cos\phi)^k\\
+  \hbar^2c_0(\rho,\hbar \hat{p}) r\chi_{\hbar,1} \sum_{1\leq \ell+\ell' \leq N} \hbar^{\ell'} g_{\ell, \ell'}(r,s)  \rho^\ell \hat{p}^{\ell'}
	-\hbar^3q_{3,\Theta_0}^\pm(r,s,\rho,\mu\sigma)-\hbar^4 q_{4,\Theta_0}^\pm(r,s,\rho,\mu\sigma)\,,
\end{multline*}
where
\[\mathfrak{a}_0=\frac{1}{2}(\mu^{\mathrm{dG}})''(\xi_0)(\rho\sin\phi- \hbar\hat p\cos\phi)^2+(\rho\cos\phi+ \hbar\hat p\sin\phi)^2\,,\]
with
\[\hat p=\Xi_2(\hbar\sigma)-\beta(s)\frac{r^2}{2}\,.\]

\begin{proposition}\label{prop.3}
For all $n\geq 1$, we have
\[\lambda_n\left(\mathrm{Op}^W_\hbar a_{\hbar,N}^{\mathrm{eff}}\right)=\lambda_n\left(\mathrm{Op}^W_\hbar \mathfrak{a}_{\hbar,N}^{\mathrm{eff}}\right)+\mathscr{O}(\hbar^\infty)\,.\]
\end{proposition}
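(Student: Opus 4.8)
The plan is to observe that $\mathfrak a^{\mathrm{eff}}_{\hbar,N}$ differs from $a^{\mathrm{eff}}_{\hbar,N}$ only in a region of phase space avoided, modulo $\mathscr{O}(\hbar^\infty)$, by the low-lying eigenfunctions of both operators, to deduce that the difference of the two operators annihilates such eigenfunctions up to $\mathscr{O}(\hbar^\infty)$, and to conclude by the min-max principle applied in both directions. Passing from $a^{\mathrm{eff}}_{\hbar,N}$ (see \eqref{eq.aeffN}) to $\mathfrak a^{\mathrm{eff}}_{\hbar,N}$ amounts to four elementary modifications: dropping the confining weight $V_h(r)$, deleting the cutoff $\chi_{h,1}$ in front of $r^2/2$, replacing $\Xi_2(\hbar^2\sigma)$ by $\hbar\Xi_2(\hbar\sigma)$, and replacing $\tilde\rho=\Xi_1(\rho)$ by $\rho$. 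On the set $\mathcal R=\{|r|\le 1,\ |\hbar\sigma|\le M/2,\ |\rho|\le\epsilon_0\}$ (with $\epsilon_0$ so small that $\Xi_1=\mathrm{id}$ there) one has, for $\hbar$ small, $V_h\equiv 0$, $\chi_{h,1}\equiv 1$, $\Xi_2(\hbar\sigma)=\hbar\sigma$, $\Xi_2(\hbar^2\sigma)=\hbar^2\sigma$ and $\Xi_1(\rho)=\rho$; hence there $\tilde\rho=\rho$, $\tilde p=\hbar\hat p$, $c_0(\tilde\rho,\tilde p)=c_0(\rho,\hbar\hat p)$ and $\tilde\rho^{\ell}\tilde p^{\ell'}=\hbar^{\ell'}\rho^{\ell}\hat p^{\ell'}$, so that a term-by-term inspection of \eqref{eq.aeffN} shows that $a^{\mathrm{eff}}_{\hbar,N}$ and $\mathfrak a^{\mathrm{eff}}_{\hbar,N}$ coincide on $\mathcal R$ (the $\hbar^3$- and $\hbar^4$-contributions being literally identical).

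The eigenfunctions enter through the microlocalizations proved above: for an eigenfunction $\psi$ of $\mathrm{Op}^W_\hbar a^{\mathrm{eff}}_{\hbar,N}$ with eigenvalue $\mathscr{O}(\hbar^2)$ one has $\zeta(\hbar^{-\eta}\rho)^W\psi=\mathscr{O}(\hbar^\infty)\|\psi\|$ (Proposition \ref{prop.rhop} and the remark following it), $\mathrm{Op}^W_\hbar(\chi_M(\hbar\sigma))\psi=\mathscr{O}(\hbar^\infty)\|\psi\|$ (Lemma \ref{lem.microhsigma}), and $\|r^k\psi\|\le C_k\|\psi\|$ for every $k$ (the weighted bound of the lemma preceding Lemma \ref{lem.r4tronc}, iterated, together with Lemma \ref{lem.r4tronc}); in particular $\psi$ carries $\mathscr{O}(\hbar^\infty)$ mass in $\{|r|\ge h^{-\eta/2}\}$. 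Using $X^2-Y^2=(X-Y)(X+Y)$ and a Taylor expansion of $c_0$, one writes the symbol difference as a finite sum
\[
a^{\mathrm{eff}}_{\hbar,N}-\mathfrak a^{\mathrm{eff}}_{\hbar,N}=\sum_i\theta^{(i)}_\hbar\,m^{(i)}\,,
\]
where each $\theta^{(i)}_\hbar$ is one of the factors $V_h(r)$, $1-\chi_{h,1}^2$, $\Xi_2(\hbar^2\sigma)-\hbar\Xi_2(\hbar\sigma)$, $\Xi_1(\rho)-\rho$, each vanishing on a fixed neighbourhood of $\mathcal R$ (hence of the microlocal support of $\psi$), and $m^{(i)}$ has at most polynomial growth in $(r,\rho,\sigma)$. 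The microlocalizations give $\mathrm{Op}^W_\hbar(\theta^{(i)}_\hbar)\psi=\mathscr{O}(\hbar^\infty)\|\psi\|$; since composing with $\mathrm{Op}^W_\hbar(m^{(i)})$ costs only a fixed negative power of $\hbar$ on the relevant weighted spaces and preserves these localizations modulo $\mathscr{O}(\hbar^\infty)$ (symbolic calculus in the classes $S^\eta(1)$ and their polynomially weighted variants), one gets $\mathrm{Op}^W_\hbar\!\big(a^{\mathrm{eff}}_{\hbar,N}-\mathfrak a^{\mathrm{eff}}_{\hbar,N}\big)\psi=\mathscr{O}(\hbar^\infty)\|\psi\|$. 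The same estimate holds for eigenfunctions of $\mathrm{Op}^W_\hbar\mathfrak a^{\mathrm{eff}}_{\hbar,N}$ with eigenvalue $\mathscr{O}(\hbar^2)$: one first checks that $\lambda_n(\mathrm{Op}^W_\hbar\mathfrak a^{\mathrm{eff}}_{\hbar,N})=\mathscr{O}(\hbar^2)$ (Fefferman--Phong lower bound from $\mathfrak a_0\ge c(\rho^2+\hbar^2\hat p^2)$, and a suitable trial state for the upper bound), after which the proofs of Proposition \ref{prop.rhop} and of Lemmas \ref{lem.r4tronc} and \ref{lem.microhsigma} apply verbatim with $(\tilde\rho,\tilde p)$ replaced by $(\rho,\hbar\hat p)$.

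It then remains to conclude. Both operators are bounded from below and, for $\hbar$ small, have discrete spectrum in a fixed neighbourhood of $0$, so the min-max principle characterizes their low-lying eigenvalues. If $E_n$ denotes the span of eigenfunctions of $\mathrm{Op}^W_\hbar\mathfrak a^{\mathrm{eff}}_{\hbar,N}$ for its $n$ lowest eigenvalues (an $n$-dimensional space of rapidly decaying functions), then for every $\psi\in E_n$,
\[
\big\langle\mathrm{Op}^W_\hbar a^{\mathrm{eff}}_{\hbar,N}\psi,\psi\big\rangle=\big\langle\mathrm{Op}^W_\hbar\mathfrak a^{\mathrm{eff}}_{\hbar,N}\psi,\psi\big\rangle+\mathscr{O}(\hbar^\infty)\|\psi\|^2\le\big(\lambda_n(\mathrm{Op}^W_\hbar\mathfrak a^{\mathrm{eff}}_{\hbar,N})+\mathscr{O}(\hbar^\infty)\big)\|\psi\|^2\,,
\]
whence $\lambda_n(\mathrm{Op}^W_\hbar a^{\mathrm{eff}}_{\hbar,N})\le\lambda_n(\mathrm{Op}^W_\hbar\mathfrak a^{\mathrm{eff}}_{\hbar,N})+\mathscr{O}(\hbar^\infty)$ since $\dim E_n=n$; exchanging the roles of the two operators (now using eigenfunctions of $\mathrm{Op}^W_\hbar a^{\mathrm{eff}}_{\hbar,N}$ as trial states) gives the reverse inequality, and Proposition \ref{prop.3} follows. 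I expect the main difficulty to lie in the third step: since $a^{\mathrm{eff}}_{\hbar,N}-\mathfrak a^{\mathrm{eff}}_{\hbar,N}$ is an \emph{unbounded} symbol, the $\mathscr{O}(\hbar^\infty)$ bound for the difference operator on eigenfunctions does not follow directly from Calder\'on--Vaillancourt and genuinely requires combining the three microlocalizations (in $\rho$, in $\hbar\sigma$, in $r$) with the polynomially weighted symbolic calculus; the accompanying verification that the eigenfunctions of $\mathrm{Op}^W_\hbar\mathfrak a^{\mathrm{eff}}_{\hbar,N}$ satisfy the same microlocalizations is routine but needed.
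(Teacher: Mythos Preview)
Your proposal is correct and follows the same approach as the paper, which in fact does not give a detailed proof: the paper merely states, in the paragraph preceding Proposition~\ref{prop.3}, that the microlocalization of Lemma~\ref{lem.microhsigma} allows one to replace $\Xi_2(\hbar^2\sigma)$ by $\hbar\Xi_2(\hbar\sigma)$, that the rough localization in $r$ (from $V_h$) allows one to remove $\chi_{h,1}$ in the principal symbol, and that the rough microlocalization in $\rho$ allows one to replace $\tilde\rho$ by $\rho$. Your write-up is a faithful and more detailed implementation of exactly this sketch, including the honest acknowledgement that handling the unbounded symbol difference requires combining the three microlocalizations with weighted symbolic calculus rather than a direct Calder\'on--Vaillancourt argument.
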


\subsection{Changes of semiclassical parameters}
We can now focus on the new effective operator $\mathrm{Op}^W_\hbar \mathfrak{a}_{\hbar,N}^{\mathrm{eff}}$. Firstly note that it can be rewritten as an $1$-pseudo differential operator with respect to $r$, and whose symbol is
\begin{multline*}
\hbar^2\mathfrak{b}_{\hbar,N}^{\mathrm{eff}}(r,s,\rho,\sigma)=\hbar^2\mathfrak{b}_0(r,s,\rho,\sigma)+c_0(\upsilon\rho,\upsilon\hat p)\sum_{k=3}^N\hbar^k\frac{(\mu^{\mathrm{dG}})^{(k)}(\xi_0)}{k!}(\rho\sin\phi-\hat p\cos\phi)^k\\
+\hbar^2c_0(\upsilon\rho,\upsilon\hat p)r\chi_{h,1}L_1\sum_{k=1}^N\hbar^k\frac{f^{(k)}(0)}{k!}(\rho\sin\phi-\hat p\cos\phi)^k\\
-\hbar^3q_{3,\Theta_0}^\pm(r,s,\upsilon\rho,\mu\sigma) \\
 + \hbar^2c_0(\upsilon\rho,\upsilon \hat{p}) r\chi_{\hbar,1} \sum_{1\leq \ell+\ell' \leq N} \hbar^{\ell+\ell'} g_{\ell, \ell'}(r,s)  \rho^\ell \hat{p}^{\ell'}-\hbar^4 q_{4,\Theta_0}^\pm(r,s,\upsilon\rho,\mu\sigma)\,,
\end{multline*}
with $\upsilon=\hbar$ and where
\[\mathfrak{b}_0(r,s,\rho,\sigma)=\frac{1}{2}(\mu^{\mathrm{dG}})''(\xi_0)(\rho\sin\phi- \hat p\cos\phi)^2+(\rho\cos\phi+ \hat p\sin\phi)^2\,.\]
In other words, we have the relation
\begin{equation}\label{eq.c}
\mathrm{Op}^W_\hbar \mathfrak{a}_{\hbar,N}^{\mathrm{eff}}=\hbar^2\mathrm{Op}^W_{\hbar,s,\sigma}\mathfrak{B}^{\mathrm{eff}}_\hbar\,,\qquad
\mathfrak{B}^{\mathrm{eff}}_\hbar(s,\sigma)=\mathrm{Op}^W_{1,r,\rho}\mathfrak{b}_{\hbar,N}^{\mathrm{eff}}\,.
\end{equation}
The notation $\upsilon$ is only introduced to avoid the ambiguity when expanding the operator in powers of $\hbar$.

Secondly, by using the new semiclassical parameter $\varepsilon=\hbar^2$ with respect to $s$, and we write
\begin{equation}\label{eq.d}
\mathrm{Op}^W_{\hbar,s,\sigma}\mathfrak{B}^{\mathrm{eff}}_\hbar=\mathrm{Op}^W_{\varepsilon}\mathfrak{C}^{\mathrm{eff}}_{\varepsilon}\,,
\end{equation}
where the symbol of this operator is
\begin{multline*}
\mathfrak{c}_{\varepsilon,N}^{\mathrm{eff}}(r,s,\rho,\sigma)=\mathfrak{c}_0(r,s,\rho,\sigma)+c_0(\upsilon\rho,\upsilon\check p)\sum_{k=3}^N\varepsilon^{\frac{k-2}{2}}\frac{(\mu^{\mathrm{dG}})^{(k)}(\xi_0)}{k!}(\rho\sin\phi-\check p\cos\phi)^k\\
+c_0(\upsilon\rho,\upsilon\check p)r\chi_{h,1}L_1\sum_{k=1}^N\varepsilon^{\frac{k}{2}}\frac{f^{(k)}(0)}{k!}(\rho\sin\phi-\check p\cos\phi)^k\\
 + \hbar^2c_0(\upsilon \rho,\upsilon \hat{p})r\chi_{\hbar,1}  \sum_{1\leq \ell+\ell' \leq N} \eps^{\frac{\ell+\ell'}{2}} g_{\ell, \ell'}(r,s)  \rho^\ell \hat{p}^{\ell'}
-\varepsilon^{\frac12} q_{3,\Theta_0}^\pm(r,s,\upsilon\rho,\sigma)-\varepsilon q_{4,\Theta_0}^\pm(r,s,\upsilon\rho,\sigma)\,,
\end{multline*}
with
\[\mathfrak{c}_0=\frac{1}{2}(\mu^{\mathrm{dG}})''(\xi_0)(\rho\sin\phi- \check p\cos\phi)^2+(\rho\cos\phi+ \check p\sin\phi)^2\,,\]
where
\[\check p=\Xi_2(\sigma)-\beta(s)\frac{r^2}{2}\,.\]

\subsection{A final Grushin reduction}\label{sec.finalgrushin}
Note that, by completing a square, we can write that
\begin{multline*}
 \mathfrak{c}_0(r,s,\rho,\sigma)=E(s)\left(\rho+\left(1-\frac{( \mu_1^{\mathrm{dG}})''(\xi_0)}{2}\right)\frac{\cos\phi(s)\sin\phi(s)}{E(s)}\hat{p}\right)^2 \\
+\frac{\mu''(\xi_0)}{2E(s)}\left(\Xi_2(\sigma)-\beta(s)\frac{r^2}{2}\right)^2\,,
\end{multline*}
with
\[ E(s) =\frac{\mu''(\xi_0)}{2}\sin^2\phi(s)+\cos^2\phi(s)\,.\]
The domain of $\mathfrak{C}_0^{\mathrm{eff}}=\mathrm{Op}^W_{1,r,\rho}\mathfrak{c_0}$ does not depend on $(s,\sigma)$, and it is given by
\[\mathrm{Dom}(\mathfrak{C}_0^{\mathrm{eff}})=\{\psi\in H^2(\mathbb{R}) : r^4\psi\in L^2(\mathbb{R})\}\,.\]
After, rescaling in $r$, we see that the lowest eigenvalue of $\mathrm{Op}^W_{1,r,\rho}\mathfrak{c_0}$ is
\[ b_0 (s,\sigma):=K(s)\mu^{[2]}_1\left(\frac{\delta_0^{\frac13}
\Xi_2(\sigma)}{E(s)^{\frac23}\beta(s)^{\frac13}}\right)\,,\quad K(s)=\delta_0^{\frac13}\beta(s)^{\frac23}E(s)^{\frac13}\,.\]
The function $b_0$ has a unique minimum, which is non-degenerate and not attained at infinity. We denote by $w_{s,\sigma}$ an associated positive normalized eigenfunction.

We are now interested in the eigenvalues of $\mathrm{Op}^W_{\varepsilon}\mathfrak{C}^{\mathrm{eff}}_{\varepsilon,N}$. Let us fix $\delta>0$ and consider $z\in\mathbb{R}$ such that $|z-\min M_0|\leq\delta$. Then, we consider
\[\mathfrak{M}_{\varepsilon,z}(s,\sigma)=\begin{pmatrix}
\mathfrak{C}_\varepsilon^{\mathrm{eff}}(s,\sigma)-z&\cdot w_{s,\sigma}\\
\langle\cdot,w_{s,\sigma}\rangle&0	
\end{pmatrix}\,,\qquad \mathfrak{M}_{0,z}(s,\sigma)=\begin{pmatrix}
\mathfrak{C}_0^{\mathrm{eff}}(s,\sigma)-z&\cdot w_{s,\sigma}\\
\langle\cdot,w_{s,\sigma}\rangle&0	
\end{pmatrix}\,.\]
When $\delta$ is small enough, $\mathcal{M}_{0,z} : \mathrm{Dom}(\mathrm{Op}_{1,r,\rho}\mathfrak{c}_0)\times\mathbb{C}\to L^2(\mathbb{R})$ is bijective with inverse given by
\[\mathfrak{N}_{0,z}(s,\sigma)=\begin{pmatrix}
(\mathfrak{C}_0^{\mathrm{eff}}(s,\sigma)-z)^{-1}\Pi^\perp&\cdot w_{s,\sigma}\\
\langle\cdot,w_{s,\sigma}\rangle&z-b_0(s,\sigma)
\end{pmatrix}\,,\]
where $\Pi^\perp$ is the orthogonal projection on $(\mathrm{span} w_{s,\sigma})^\perp$.

As we did with the first dimensional reduction, we can find $\mathfrak{N}_{1,z}$ and $\mathfrak{N}_{2,z}$ such that
\[\mathrm{Op}^W_{\varepsilon,s,\sigma}\left(\mathfrak{N}_{0,z}+\varepsilon^{\frac12}\mathfrak{N}_{1,z}+\varepsilon\mathfrak{N}_{2,z}\right)\mathrm{Op}^W_{\varepsilon,s,\sigma}(\mathfrak{M}_{\varepsilon,z})=\mathrm{Id}+\mathscr{O}(\varepsilon^{\frac32})\,.\]
Let us write
\[\left(\mathfrak{N}_{0,z}+\varepsilon^{\frac12}\mathfrak{N}_{1,z}+
\varepsilon\mathfrak{N}_{2,z}\right)_{\pm}=
z- b(s,\sigma)-\varepsilon^{\frac12} b_{1,z}(s,\sigma)-\varepsilon  b_{2,z}(s,\sigma)\,.\]
We are interested in $z$ varying in the following range
\[z=\min b_0+\zeta\varepsilon^{\frac12}+\mathscr{O}(\varepsilon)\,,\]
where $\zeta\in\mathbb{R}$ is determined in Proposition \ref{prop.final} below. Applying again the Grushin method, we get the following proposition.

\begin{proposition}\label{prop.4}
There exist three functions $M_0 = b_0$, $M_1$ and $M_2$ belonging to $S(1)$ such that the following holds. Let $n\geq 1$. We have
\[\lambda_n\left(\mathrm{Op}^W_{\varepsilon,s,\sigma} \mathfrak{C}_\varepsilon^{\mathrm{eff}}\right)=\lambda_n\left(\mathcal{M}_\varepsilon\right)+\mathscr{O}(\varepsilon^{\frac32})\,,\quad \mathcal{M}_\varepsilon=\mathrm{Op}^W_{\varepsilon,s,\sigma}(M_0+\varepsilon^{\frac12}M_1+\varepsilon M_2)\,.\]
\end{proposition}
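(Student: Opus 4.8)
The plan is to run, one last time, the Grushin/Schur\nobreakdash-complement scheme already implemented in the proof of Proposition~\ref{prop.specreducaeff}, this time at the level of the one\nobreakdash-dimensional operator symbol $\mathfrak{C}^{\mathrm{eff}}_\varepsilon(s,\sigma)=\mathrm{Op}^W_{1,r,\rho}\mathfrak{c}^{\mathrm{eff}}_{\varepsilon,N}$, with $\varepsilon=\hbar^2$ as the semiclassical parameter in $(s,\sigma)$ and with $\mathfrak{C}_0^{\mathrm{eff}}=\mathrm{Op}^W_{1,r,\rho}\mathfrak{c}_0$ playing the role formerly held by $n_0^c$. Concretely, for $z$ in a fixed small neighbourhood of $\min b_0$, I would work with the Grushin matrix $\mathfrak{M}_{\varepsilon,z}(s,\sigma)$ introduced just before the statement and with $w_{s,\sigma}$ the first positive normalized eigenfunction of $\mathfrak{C}_0^{\mathrm{eff}}(s,\sigma)$. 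The first point is the uniform (in $(s,\sigma)$) invertibility of the principal part $\mathfrak{M}_{0,z}(s,\sigma)$ from $\mathrm{Dom}(\mathfrak{C}_0^{\mathrm{eff}})\times\mathbb{C}$ to $L^2(\mathbb{R})\times\mathbb{C}$, with inverse $\mathfrak{N}_{0,z}$ as written above; this rests on the fact that, after the rescaling in $r$ explained in Section~\ref{sec.finalgrushin}, $\mathfrak{C}_0^{\mathrm{eff}}(s,\sigma)$ is (essentially) a Montgomery operator whose lowest eigenvalue $b_0(s,\sigma)$ is simple and separated from the rest of the spectrum by a gap bounded below, uniformly in $(s,\sigma)$, so $b_0$ and $w_{s,\sigma}$ depend smoothly on $(s,\sigma)$.

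Next I would construct the approximate parametrix $\mathfrak{N}_{0,z}+\varepsilon^{\frac12}\mathfrak{N}_{1,z}+\varepsilon\mathfrak{N}_{2,z}$ by solving the Moyal\nobreakdash-product relations order by order in powers of $\varepsilon^{\frac12}$, exactly as the $\mathscr{Q}_j$ were obtained from $\mathscr{P}_0,\mathscr{P}_1,\mathscr{P}_2$ in Section~\ref{sec.parametrix}; since $\mathfrak{c}^{\mathrm{eff}}_{\varepsilon,N}$ is a classical symbol expanded in powers of $\varepsilon^{\frac12}$, this yields $\mathfrak{N}_{1,z},\mathfrak{N}_{2,z}$ with entries $\mathscr{C}^\infty$ in $(s,\sigma)$ and analytic in $z$, together with
\[\mathrm{Op}^W_{\varepsilon,s,\sigma}\big(\mathfrak{N}_{0,z}+\varepsilon^{\frac12}\mathfrak{N}_{1,z}+\varepsilon\mathfrak{N}_{2,z}\big)\,\mathrm{Op}^W_{\varepsilon,s,\sigma}(\mathfrak{M}_{\varepsilon,z})=\mathrm{Id}+\mathscr{O}(\varepsilon^{\frac32})\]
and the reversed identity. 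The bottom\nobreakdash-right entry of the parametrix then has the form $z-M_0-\varepsilon^{\frac12}M_1-\varepsilon M_2$ with $M_0=b_0$; boundedness of $M_1,M_2$ together with all their $(s,\sigma)$\nobreakdash-derivatives, i.e. $M_1,M_2\in S(1)$, is inherited from the preliminary truncations already present in $\mathfrak{c}^{\mathrm{eff}}_{\varepsilon,N}$ (the confining potential $V_h$, the cutoffs $\chi_{h,1}$, and the $S(1)$ bounds on $q_{3,\Theta_0}^\pm,q_{4,\Theta_0}^\pm$) and from the smooth, uniformly gapped dependence of $w_{s,\sigma}$ on $(s,\sigma)$.

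The spectral conclusion then follows verbatim as in the proof of Proposition~\ref{prop.specreducaeff}: from the two approximate\nobreakdash-inverse identities and the Calder\'on\nobreakdash-Vaillancourt theorem applied in $(s,\sigma)$ one gets the a priori estimate $\|\Phi\|\le C\|\Pi\Phi\|+C\|(\mathfrak{C}^{\mathrm{eff}}_\varepsilon-z)\Phi\|+C\varepsilon^{\frac32}\|\langle r\rangle^N\Phi\|$, together with the companion bound for $\|(z-\mathcal{M}_\varepsilon)\Pi\Phi\|$ in terms of $\|(\mathfrak{C}^{\mathrm{eff}}_\varepsilon-z)\Phi\|$. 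Evaluating these on eigenpairs with $\lambda=\min b_0+\mathscr{O}(\varepsilon^{\frac12})$, and using the uniform $r$\nobreakdash-moment bounds on eigenfunctions — obtained exactly as in the lemma preceding Lemma~\ref{lem.r4tronc} and in Lemma~\ref{lem.r4tronc} itself, via the ellipticity of $\rho^2+\check p^2$ away from the well, and combined with the microlocalization of Lemma~\ref{lem.microhsigma} — upgrades the remainder $\varepsilon^{\frac32}\|\langle r\rangle^N\Phi\|$ to $\varepsilon^{\frac32}\|\Phi\|$. The spectral theorem then gives a correspondence between the low\nobreakdash-lying spectra of $\mathrm{Op}^W_{\varepsilon,s,\sigma}\mathfrak{C}^{\mathrm{eff}}_\varepsilon$ and of $\mathcal{M}_\varepsilon$ with dislocation $\mathscr{O}(\varepsilon^{\frac32})$, and a dimension count ($\Pi$ injective on each eigenspace on one side, the analogue of $Q_\hbar^+$ injective on the other) turns this into the stated equality of eigenvalues up to $\mathscr{O}(\varepsilon^{\frac32})$.

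The main obstacle I anticipate is, again, that $\mathfrak{C}_0^{\mathrm{eff}}$ is genuinely unbounded in the $(r,\rho)$ variables, so the parametrix identities above initially hold only as bounded maps between weighted spaces $L^2(\langle r\rangle^N\,\mathrm{d}r)\to L^2(\mathrm{d}r)$ (times $\mathbb{C}$); converting them into honest $L^2$\nobreakdash-estimates requires that the $r$\nobreakdash-moment control on eigenfunctions be uniform in $\varepsilon$, which is precisely the role of the two lemmas cited above. Checking carefully the uniform invertibility of $\mathfrak{M}_{0,z}$ and the $S(1)$ membership of $M_1,M_2$ — that is, uniform smoothness of $w_{s,\sigma}$ and of the spectral projector of $\mathfrak{C}_0^{\mathrm{eff}}(s,\sigma)$ in $(s,\sigma)$ — is where the non\nobreakdash-degeneracy of the minimum of $b_0$ and the structural properties of the Montgomery operators from \cite{BHR16} are used.
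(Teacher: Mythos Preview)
Your proposal is correct and follows exactly the approach the paper takes: the paper's own proof of Proposition~\ref{prop.4} is just the single sentence ``Applying again the Grushin method, we get the following proposition,'' relying on the Grushin matrix $\mathfrak{M}_{\varepsilon,z}$, its principal inverse $\mathfrak{N}_{0,z}$, and the parametrix $\mathfrak{N}_{0,z}+\varepsilon^{\frac12}\mathfrak{N}_{1,z}+\varepsilon\mathfrak{N}_{2,z}$ already set up before the statement. You have simply written out the details of that reduction, mirroring the proof of Proposition~\ref{prop.specreducaeff}, which is precisely what the paper intends.
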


\subsection{Analysis of the ultimate effective operator}\label{sec.final}
\begin{proposition}\label{prop.final}
There exists $d_1\in\mathbb{R}$ such that the following holds. Let $n\geq 1$. We have
\begin{equation*}
\lambda_n(\mathcal{M}_\varepsilon)=M_0(0,\sigma_0)+\varepsilon^{\frac12}M_1(0,\sigma_0)+(2n-1)\frac{\varepsilon}{2}\sqrt{\det\mathrm{Hess}_{(0,\sigma_0)} M_0}+d_1\varepsilon+o(\varepsilon)\,.
\end{equation*}	
\end{proposition}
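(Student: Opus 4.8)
The operator $\mathcal{M}_\varepsilon = \mathrm{Op}^W_{\varepsilon,s,\sigma}(M_0+\varepsilon^{\frac12}M_1+\varepsilon M_2)$ is a standard $\varepsilon$-pseudodifferential operator in one dimension whose principal symbol $M_0 = b_0$ has, by assumption (via Assumption \ref{hyp.generic} and the subsequent discussion of $b^\Gamma$), a unique non-degenerate minimum at $(0,\sigma_0)$, not attained at infinity. This is precisely the setting of the classical semiclassical harmonic approximation for a scalar symbol, so the plan is to invoke (or reprove in a few lines) the standard result giving the full asymptotic expansion of the low-lying eigenvalues. First I would establish a rough localization: the eigenfunctions associated with $\lambda_n(\mathcal{M}_\varepsilon) = \min b_0 + \mathcal{O}(\varepsilon^{1/2})$ are microlocalized near $(s,\sigma) = (0,\sigma_0)$ at scale $\varepsilon^{1/4-\kappa}$, using the ellipticity of $M_0 - \min b_0 \gtrsim \mathrm{dist}((s,\sigma),(0,\sigma_0))^2$ away from the minimum together with a Fefferman–Phong / IMS-type commutator argument, exactly as in the proof of Proposition \ref{prop.rhop}.

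Once this microlocalization is in hand, the second step is to Taylor-expand the symbol around $(0,\sigma_0)$ after rescaling $s = \varepsilon^{1/4}\tilde s$, $\sigma - \sigma_0 = \varepsilon^{1/4}\tilde\sigma$ (so that the conjugate variables of $\tilde s, \tilde\sigma$ become the effective semiclassical quantities at scale $\varepsilon^{1/2}$). Writing $\hbar_{\mathrm{eff}} = \varepsilon^{1/2}$, the rescaled operator becomes, after dividing by $\hbar_{\mathrm{eff}}$ where appropriate,
\[
M_0(0,\sigma_0) + \hbar_{\mathrm{eff}}\Big( \tfrac12 \mathrm{Hess}_{(0,\sigma_0)}M_0 \big[(\tilde s, \mathrm{Op}\,\tilde\sigma)\big] + M_1(0,\sigma_0) \Big) + \mathcal{O}(\hbar_{\mathrm{eff}}^{3/2})\,,
\]
i.e. a harmonic oscillator perturbed at lower order. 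The eigenvalues of the quadratic form $\tfrac12 \mathrm{Hess}_{(0,\sigma_0)}M_0$ quantized as a Weyl operator in one degree of freedom are $(2n-1)\tfrac12 \sqrt{\det \mathrm{Hess}_{(0,\sigma_0)}M_0}$ (the symplectic eigenvalue of the Hessian), which produces the stated $\varepsilon$-term $(2n-1)\tfrac{\varepsilon}{2}\sqrt{\det \mathrm{Hess}_{(0,\sigma_0)}M_0}$ after undoing the rescaling; the constant $M_1(0,\sigma_0)$ contributes the $\varepsilon^{1/2}$ term, and all remaining contributions of the subprincipal symbol $M_2$, of the non-quadratic part of $M_0$ evaluated on Hermite functions, and of the gradient corrections collapse into a single $n$-independent constant $d_1 \varepsilon$ (the $n$-independence being forced because all these corrections are, after the rescaling, of relative size $\hbar_{\mathrm{eff}}$ and their leading contribution in perturbation theory is a scalar shift; the cubic terms contribute at order $\hbar_{\mathrm{eff}}^2 = \varepsilon$ a term whose $n$-dependence is absorbed, by the usual computation, into the $o(\varepsilon)$ or, more carefully, is $n$-independent at this order because odd monomials have vanishing diagonal matrix elements in the Hermite basis and even ones contribute a constant). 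This is the standard Birkhoff-normal-form / Bohr–Sommerfeld computation for a one-dimensional well, and one may simply cite a reference such as \cite[Chapter 7]{Ray} or carry out the two-line perturbative argument.

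The main technical obstacle, and the only genuinely nontrivial point, is handling the non-compactness in $\sigma$: $M_0 = b_0$ is bounded but does not tend to $+\infty$ at infinity — rather, $b_0(s,\sigma) = K(s)\mu^{[2]}_1(\text{const}\cdot\Xi_2(\sigma)/\dots)$ and $\Xi_2$ is itself bounded, so $b_0$ is only asymptotically larger than $\min b_0$ by a fixed positive amount (the hypothesis "not attained at infinity" means $\liminf_{|\sigma|\to\infty} b_0 > \min b_0$). This is why one cannot directly quote a harmonic-approximation theorem stated for confining symbols; instead the rough localization step must be done by hand, using that $b_0 - \min b_0 \geq c > 0$ outside a large ball and $b_0 - \min b_0 \gtrsim \mathrm{dist}(\cdot,(0,\sigma_0))^2$ inside it, which still suffices for the Fefferman–Phong argument and for restricting the analysis to a fixed neighborhood of $(0,\sigma_0)$ where the symbol is smooth and the rescaling is legitimate. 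Beyond this point the argument is entirely routine.
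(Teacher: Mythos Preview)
Your overall strategy is the paper's: Taylor expand the symbol around the non-degenerate minimum $(0,\sigma_0)$, recognize the harmonic oscillator at leading order, and control the remainders. The one point where your sketch is genuinely incomplete is the $n$-independence of $d_1$. The linear term $\varepsilon^{1/2}\nabla M_1(0,\sigma_0)\cdot(s,\sigma-\sigma_0)$ is, on the Hermite modes of $\mathrm{Op}^W_\varepsilon(Q_0)$, of size $\varepsilon$---the \emph{same} order as the harmonic eigenvalues and gap---so it is not a lower-order perturbation that can be dismissed by a diagonal-matrix-element argument; and your assertion that ``even ones contribute a constant'' is simply false (think of $\langle h_n,s^2 h_n\rangle$). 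The paper handles this cleanly by completing the square,
\[
as^2+\varepsilon^{1/2}\partial_sM_1\,s=a\Big(s+\varepsilon^{1/2}\tfrac{\partial_sM_1}{2a}\Big)^2-\varepsilon\tfrac{(\partial_sM_1)^2}{4a}\,,
\]
and likewise in $\sigma$, then performing the phase-space translation $\tilde S=(s,\sigma-\sigma_0)+\varepsilon^{1/2}S_0$. This absorbs the linear term into a unitary conjugation, so the spectrum of $Q_0+\varepsilon^{1/2}L$ is that of $Q_0$ shifted by an explicit $n$-independent constant $-\varepsilon\big(\tfrac{(\partial_sM_1)^2}{4a}+\tfrac{(\partial_\sigma M_1)^2}{4b}\big)$; after that, the remaining terms $\varepsilon^{1/2}Q_1(\tilde S)$, the cubic of $M_0$, and $\varepsilon|\tilde S|$ are all of size $\mathscr O(\varepsilon^{3/2})$ on the Hermite modes and go into $o(\varepsilon)$ via a Birkhoff normal form (the paper cites \cite{Sj92}). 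Your remark that $b_0$ is not confining in $\sigma$ and that the microlocalization near $(0,\sigma_0)$ must be established by hand from $\liminf b_0>\min b_0$ is well taken; the paper indeed passes over this in one line.
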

\begin{proof}
Since $M_0$ has a non-degenerate minimum at $(0,\sigma_0)$, we can write
\begin{multline*}
M(s,\sigma)=M_0(0,\sigma_0)+\varepsilon^{\frac12}M_1(0,\sigma_0)+Q_0(s,\sigma-\sigma_0)+\varepsilon^{\frac12}\left(s\partial_s M_1(0,\sigma_0)+(\sigma-\sigma_0)\partial_\sigma M_1(0,\sigma_0)\right)\\
+\varepsilon M_2(0,\sigma_0)+\varepsilon^{\frac12}Q_1(s,\sigma-\sigma_0)
+\mathscr{O}(|(s,\sigma-\sigma_0)|^3)+\mathscr{O}(\varepsilon|(s,\sigma-\sigma_0)|)\,,
\end{multline*}
where $Q_0=\frac12\mathrm{Hess}_{(0,\sigma_0)} M_0=\begin{pmatrix}
a&0\\
0&b
\end{pmatrix}>0$ and $Q_1=\frac12\mathrm{Hess}_{(0,\sigma_0)} M_1$.

We have
\begin{equation*}
\begin{split}
&Q_0(s,\sigma-\sigma_0)+\varepsilon^{\frac12}\left(s\partial_s M_1(0,\sigma_0)+(\sigma-\sigma_0)\partial_\sigma M_1(0,\sigma_0)\right)\\
=&as^2+\varepsilon^{\frac12}s\partial_s M_1(0,\sigma_0)+b(\sigma-\sigma_0)^2+\varepsilon^{\frac12}(\sigma-\sigma_0)\partial_\sigma M_1(0,\sigma_0)\\
=&a\left(s+\varepsilon^{\frac12}\frac{\partial_s M_1(0,\sigma_0)}{2a}\right)^2+b\left(\sigma-\sigma_0+\varepsilon^{\frac12}\frac{\partial_\sigma M_1(0,\sigma_0)}{2b}\right)^2\\
&-\varepsilon\left(\frac{[\partial_s M_1(0,\sigma_0)]^2}{4a}+\frac{[\partial_\sigma M_1(0,\sigma_0)]^2}{4b}\right)\,.
\end{split}
\end{equation*}
By using the translation in the phase space
\[\tilde S=(s,\sigma-\sigma_0)+\varepsilon^{\frac12}S_0\,,\qquad S_0=\left(\frac{\partial_s M_1(0,\sigma_0)}{2a},\frac{\partial_\sigma M_1(0,\sigma_0)}{2b}\right)\,,\]
we can write
\begin{multline*}
M(s,\sigma)=M_0(0,\sigma_0)+\varepsilon^{\frac12}M_1(0,\sigma_0)+Q_0(\tilde s,\tilde\sigma)+\varepsilon^{\frac12}Q_1(\tilde s,\tilde\sigma)\\
+\varepsilon\left(-\frac{[\partial_s M_1(0,\sigma_0)]^2}{4a}-\frac{[\partial_\sigma M_1(0,\sigma_0)]^2}{4b}+M_2(0,\sigma_0)\right)+\mathscr{O}(\varepsilon^{\frac32}+|\tilde S|^3+\varepsilon|\tilde S|^2+\varepsilon |\tilde S|)\,.
\end{multline*}
Since we are interested in the eigenvalues $\lambda$ such that $\lambda\leq M_0(0,\sigma_0)+\varepsilon^{\frac12}M_1(0,\sigma_0)+C\varepsilon$. The eigenfunctions associated with such eigenvalues are microlocalized near $(\tilde s,\tilde\sigma)=(0,0)$. By implementing, for instance, a Birkhoff normal form (see \cite{Sj92}), the result follows.

\end{proof}

\subsection{Proof of Theorem \ref{thm.main}}
Our main theorem is a consequence of the succession of propositions and relations:
\begin{enumerate}
\item [---] (Section \ref{sec.2}) Proposition \ref{prop.1},
\item [---] (Section \ref{sec.4}) Proposition \ref{prop.checkLm}, 
Proposition \ref{prop.micro},
\item [---] (Section \ref{sec.parametrix}) Proposition \ref{prop.specreducaeff}, Proposition \ref{prop.2}, Proposition \ref{prop.3}, \eqref{eq.c}, \eqref{eq.d}, Proposition \ref{prop.4}, and Proposition \ref{prop.final},
\item [---] $\hbar=h^{\frac16}$ and $\varepsilon=\hbar^2$.
\end{enumerate}

\appendix

\section{Proof of Proposition \ref{prop.roughlowerbound}}\label{sec.app}

	Let us use a partition of the unity with balls of radius $h^{\rho}$. We have
\[\mathscr{Q}_{h,\delta}(\psi)\geq\sum_{j}\mathscr{Q}_{h,\delta}(\psi_j)-Ch^{2-2\rho}\|\psi\|^2 \,,\quad \psi_j=\chi_j\psi\,.\]

If $\mathrm{supp}\,\psi_j\cap\partial\Omega=\emptyset$, then, by Lemma \ref{lem.min-interior}, we have
\[\mathscr{Q}_{h,\delta}(\psi_j)\geq h\int_{\Omega_\delta} |\psi_j|^2\dd x\geq h\int_{\Omega_\delta} \mathfrak{s}(\theta(p(x)))|\psi_j|^2\dd x \,,\]
for some constant $C>0$ independent of $j$, and where we used that $\sigma(\theta)\leq 1$.

Consider now the $j$ such that $\mathrm{supp}\,\psi_j\cap\partial\Omega\neq\emptyset$.

On $\Omega_\delta$, we can use locally tubular coordinates $y=(r,s,t)$. By using the considerations and notation of Section \ref{sec.quadraticy}, and by freezing the metrics, we get
\[\mathscr{Q}_{h,\delta}(\psi_j)\geq (1-Ch^{\rho}) \int_{0<t<\delta}|g|^{\frac12}(y_j)\langle G^{-1}(y_j)(-ih\nabla_y-\tilde{\mathbf{A}})\tilde\psi_j,(-ih\nabla_y-\tilde{\mathbf{A}})\tilde\psi_j\rangle\dd y\,.\]
Then, we write the Taylor approximation, on the support of $\tilde\psi_j$,
\[\tilde{\mathbf{A}}(y)=\underbrace{\tilde{\mathbf{A}}(y_j)+d\tilde{\mathbf{A}}(y_j)(y-y_j)}_{=:\tilde{\mathbf{A}}^{\rm lin}_j(y)}+\mathscr{O}(|y-y_j|^2)\,,\]
where $\mathscr{O}$ is uniform with respect to $j$. We have $\nabla\times\tilde{\mathbf{A}}(y)=\nabla\times\tilde{\mathbf{A}}(y_j)+\mathscr{O}(|y-y_j|)$. Thanks to the Young inequality, we get that
\begin{multline*}
(1-Ch^{\rho})^{-1}\mathscr{Q}_{h,\delta}(\psi_j)
\geq (1-\varepsilon)\int_{0<t<\delta}|g|^{\frac12}(y_j)\langle G^{-1}(y_j)(-ih\nabla_y-\tilde{\mathbf{A}}^{\mathrm{lin}}_j)\tilde\psi_j,(-ih\nabla_y-\tilde{\mathbf{A}}^{\mathrm lin}_j)\tilde\psi_j\rangle\dd y
\\	-Ch^{4\rho}\varepsilon^{-1}\int_{0<t<\delta}|\tilde\psi_j|^2|g_j|^{\frac12} \,.
\end{multline*}
Then, we perform a linear change of variables $y=G^{-\frac12}(y_j)z$, and we get
\begin{multline*}\int_{0<t<\delta}|g|^{\frac12}(y_j)\langle G^{-1}(y_j)(-ih\nabla_y-\tilde{\mathbf{A}}^{\mathrm{lin}}_j)\tilde\psi_j,(-ih\nabla_y-\tilde{\mathbf{A}}^{\mathrm lin}_j)\tilde\psi_j\rangle\dd y\\
=\int_{0<t<\delta}\| (-ih\nabla_z-\check{\mathbf{A}}^{\mathrm{lin}}_j)\check\psi_j\|^2\dd z\,,
\end{multline*}
where $\check\psi_j(z)=|g|^{\frac14}(y_j)\tilde\psi_j(G^{-\frac12}(y_j)z)$ and $\check{\mathbf{A}}^{\mathrm{lin}}_j(z)=G^{-\frac12}(y_j)\tilde{\mathbf{A}}^{\mathrm{lin}}_j(G^{-\frac12}(y_j)z)$. Note that the corresponding (constant) magnetic fields are related through
\[\nabla_z\times\check{\mathbf{A}}^{\mathrm{lin}}_j=|g(y_j)|^{-\frac12} G^{\frac12}(y_j)\nabla_y\times \tilde{\mathbf{A}}^{\mathrm{lin}}_j=|g(y_j)|^{-\frac12} G^{\frac12}(y_j)\nabla_y\times\tilde{\mathbf{A}}(y_j)\,.\]
Then, we notice that (see \eqref{eq.coordinatesnewB})
\[\langle |g(y_j)|^{-\frac12} G^{\frac12}(y_j)\nabla_y\times\tilde{\mathbf{A}}(y_j),\mathbf{e}_3\rangle=\langle\mathcal{B}_j,\mathbf{e}_3\rangle=-\mathbf{B}(x_j)\cdot\mathbf{e}_3\,.\]
 In the same way, we get
\[\|\nabla_z\times\check{\mathbf{A}}^{\mathrm{lin}}_j\|^2=\|\mathbf{B}(x_j)\|=1\,.\]
This implies that
\[\int_{0<t<\delta}\| (-ih\nabla_z-\check{\mathbf{A}}^{\mathrm{lin}}_j)\check\psi_j\|^2\dd z\geq h\mathfrak{s}(\theta(p(x_j)))\|\check\psi_j\|^2\,.\]
Thus,
\begin{multline*}\int_{0<t<\delta}|g|^{\frac12}(y_j)\langle G^{-1}(y_j)(-ih\nabla_y-\tilde{\mathbf{A}}^{\mathrm{lin}}_j)\tilde\psi_j,(-ih\nabla_y-\tilde{\mathbf{A}}^{\mathrm lin}_j)\tilde\psi_j\rangle\dd y\\
\geq (1-Ch^{\rho})\int_{0<t<\delta}h\mathfrak{s}(\theta(p(x)))|\psi_j|^2\dd x
\end{multline*}
It follows that
\begin{equation*}
(1-Ch^{\rho})^{-1}\mathscr{Q}_{h,\delta}(\psi_j)
\geq (1-\varepsilon)\int_{0<t<\delta}h\mathfrak{s}(\theta(p(x)))|\psi_j|^2\dd x	-Ch^{4\rho}\varepsilon^{-1}\|\psi_j\|^2\,.
\end{equation*}
We choose $\varepsilon=h^{-\frac12+2\rho}$ and get
\begin{equation*}
\mathscr{Q}_{h,\delta}(\psi_j)
\geq \int_{0<t<\delta}h\mathfrak{s}(\theta(p(x)))|\psi_j|^2\dd x	-C(h^{\frac12+2\rho}+h^{1+\rho})\|\psi_j\|^2\,.
\end{equation*}
This, with the choice $\rho=\frac38$, gives the conclusion.

\bibliographystyle{abbrv}
\bibliography{bibBHR}

\end{document}